\newtheorem{theorem}{Theorem}
\newtheorem{lemma}{Lemma}
\newtheorem{corollary}{Corollary}
\newtheorem{definition}{Definition}
\newtheorem{proposition}{Proposition}
\newtheorem{convention}{Convention}
\newcommand{\isom}{{\,\cong\,}}
\newcommand{\tensor}{\otimes}
\newcommand{\btensor}{\hat{ \otimes} }
\newcommand{\homotopic}{\simeq}
\newcommand{\bHom}{\operatorname{Hom}^{bdd}}
\newcommand{\Hom}{\operatorname{Hom}}
\newcommand{\Ext}{\operatorname{Ext}}
\newcommand{\Tor}{\operatorname{Tor}}
\newcommand{\im}{\operatorname{im}}
\newcommand{\coker}{\operatorname{coker}}
\newcommand{\N}{\mathbb{N}}
\newcommand{\C}{\mathbb{C}}
\newcommand{\R}{\mathbb{R}}
\newcommand{\Q}{\mathbb{Q}}
\newcommand{\Z}{\mathbb{Z}}
\newcommand{\B}{\mathcal{B}}
\newcommand{\I}{{\mathcal{I}}}
\newcommand{\cR}{{\mathcal{R}}}
\newcommand{\BGH}{{\mathcal{H}_{\B,w}\left(G/H\right)}}
\newcommand{\BGHi}{{\mathcal{H}_{\B,w}\left(G/H_\alpha\right)}}
\newcommand{\BGRH}{{\mathcal{H}_{\B,w}\left(G/\mathcal{H}\right)}}
\newcommand{\BD}{{\Delta_\mathcal{B}}}
\newcommand{\BG}{{\mathcal{H}_{\B,L}\left(G\right)}}
\newcommand{\BGone}{{\mathcal{H}_{\B,L_1}\left(G_1\right)}}
\newcommand{\BGtwo}{{\mathcal{H}_{\B,L_2}\left(G_2\right)}}
\newcommand{\BGthree}{{\mathcal{H}_{\B,L_3}\left(G_3\right)}}
\newcommand{\BH}{{\mathcal{H}_{\B,L}\left(H\right)}}
\newcommand{\BHi}{{\mathcal{H}_{\B,L}\left(H_\alpha\right)}}
\newcommand{\BR}{{\mathcal{H}_{\B,L}\left(R\right)}}
\newcommand{\CGRH}{{\mathbb{C}[G/{\mathcal{H}}]}}
\newcommand{\ttensor}{{\hat\otimes}}
\newcommand{\RH}{\mathcal{H}}
\begin{document}

\title{{$\B$}-Bounded Cohomology and Applications}

%
\author{RONGHUI JI} 
\address{Department of Mathematical Sciences, IUPUI, 402 N. Blackford St.\\
         Indianapolis, IN 46202 USA
}
\email{ronji@math.iupui.edu}
%
\author{CRICHTON OGLE} 
\address{Department of Mathematics,The Ohio State University, 231 W. 18th Ave.\\
         Columbus, OH 43210 USA
}
\email{ogle@math.ohio-state.edu}
%
\author{BOBBY W. RAMSEY} 
\address{Department of Mathematics,The Ohio State University, 231 W. 18th Ave.\\
         Columbus, OH 43210 USA
}
\email{ramsey.313@math.osu.edu}

\begin{abstract} 
A discrete group with word-length $(G,L)$ is $\B$-isocohomological for a bounding classes $\B$ if the comparison map 
from $\B$-bounded cohomology to ordinary cohomology (with coefficients in $\C$) is an isomorphism; it is strongly 
$\B$-isocohomological  if the same is true with arbitrary coefficients. In this paper we establish some basic conditions 
guaranteeing strong $\B$-isocohomologicality. In particular, we show strong $\B$-isocohomologicality for an $FP^{\infty}$ 
group $G$ if all of the weighted $G$-sensitive Dehn functions are $\B$-bounded. Such groups include all $\B$-asynchronously 
combable groups; moreover, the class of such groups is closed under constructions arising from groups acting on an acyclic 
complex. We also provide examples where the comparison map fails to be injective, as well as surjective, and give an example 
of a solvable group with quadratic first Dehn function, but exponential second Dehn function.  Finally, a relative theory of 
$\B$-bounded cohomology of groups with respect to subgroups is introduced. Relative isocohomologicality is determined in terms 
of a new notion of {\it relative} Dehn functions and a {\it relative} $FP^\infty$ property for groups with respect to a 
collection of subgroups.  Applications for computing $\B$-bounded cohomology of groups are given in the context of relatively 
hyperbolic groups and developable complexes of groups.
\end{abstract}

\maketitle



\tableofcontents


\section{Introduction}

To a bounding class of functions $\B$, a group with length $(G,L)$ and a weighted $G$-complex $(X,w)$ 
one can associate the $\B$-bounded, $G$-equivariant cohomology of $X$ with coefficients in a $\BG$-module $V$:
\[
{\B}H^*_G(X;V)
\]
The construction is a variant on that used in the non-bounded case. Starting with $(G,L)$ and $\B$, 
one constructs a bornological algebra $\BG$ - a completion of the group algebra 
${\C}[G]$ -  and a bornological $\BG$-module ${\B}C_*(EG\times X)$
 - a similar completion of the the complex of singular chains $C_*(EG\times X)$. Then given a bornological 
 $\BG$-module $V$, one forms the (co)complex
\[
\bHom_{\BG}({\B}C_*(EG\times X),V)
\]
of bounded $\BG$-module homomorphisms. The cohomology groups ${\B}H^*_G(X;V)$ 
are then defined as the (algebraic) cohomology groups of this complex. There is a natural transformation of functors
\[
{\B}H^*_{_-}(_-;_-)\to H^*_{_-}(_-;_-)
\]
which, for given values, is referred to as the \underline{comparison map}
\[
\Phi_{\B}^* = \Phi_{{\B},G}^*(X;V) : {\B}H^*_G(X;V)\to H^*_G(X;V)
\]

One wants to know the properties of this map; when it is injective, surjective, and what structures are preserved 
under it. In some special cases, quite a bit is known. For example, taking 
${\B}={\B}_{\min} = \{\text{constant functions}\}$ yields $\RH_{{\B}_{\min},L}(G) = \ell^1(G)$, 
and the resulting cohomology theory is simply the equivariant bounded cohomology of $X$ with coefficients in the Banach 
$\ell^1(G)$-module $V$. At the other extreme, when 
${\B}= {\B}_{max} = \{f:\mathbb R_+\to \mathbb R^+\ |\ f\,\,\text{non-decreasing}\}$, the comparison 
map becomes an isomorphism under appropriate finiteness conditions: $G$ is an $FP^{\infty}$ group and $X$ is a $G$-complex 
with finitely many orbits in each dimension. More interesting, 
and also more subtle, is the case when the bounding class $\B$ lies between these two extremes, because it is 
in this range that the weight function on $X$ and word-length function on $G$ have the potential for influencing the  
$\B$-bounded cohomology groups in a non-trivial way. To illustrate why this is of interest, we consider two applications.

\begin{itemize} 
\item The topological $K$-theory of $\ell^1(G)$. Here the bounding class ${\B} = {\mathcal{P}} = $ 
	\{non-decreasing polynomials\} is of particular interest, as $\RH_{{\mathcal{P}},L}(G)$ is a smooth subalgebra 
	of $\ell^1(G)$. Taking $X = pt$ and $V = {\C}$, the image of $\Phi^*_{\B}$ consists precisely 
	of those cohomology classes in $H^*(G) = H^*(G;\C )$ which are polynomially bounded with respect to the 
	word-length function on $G$. Such classes therefore satisfy the $\ell^1$-analogue of the Novikov Conjecture, and 
	there is reason to believe they also satisfy the Strong Novikov Conjecture (without any additional Rapid Decay 
	condition on the group). For this application, one would like the comparison map to be \underline{surjective}.

\item The Strong $\ell^1$-Bass Conjecture. Here one is interested in determining the image of the Chern character 
	$ch_*: K_*^t(\ell^1(G))\to HC_*^{top}(\RH_{{{\mathcal{P}}},L}(G))$ \cite{JOR1}. In this case, the problem of showing 
	that the image lies in the ``elliptic summand" amounts to proving the \underline{injectivity} of the comparison map (for suitable choice of $X$). 
\end{itemize}

Of course, both properties hold when $\Phi^*_{\B}$ is bijective, i.e., when $G$ or more precisely $(G,L)$ is 
\underline{isocohomological} \cite{M1, M2}. In fact, up until this point, all proofs of either injectivity or surjectivity for a 
given bounding class  have arisen by a verification of this stronger isocohomological property. The first type of result 
in this direction is due to the first author, who showed in \cite{J1} that 
$HC_*^{top}(H_{{{\mathcal{P}}},L}(G))\isom HC_*(\C[G])$ for groups of polynomial growth. Subsequently, it was 
determined independently by the second author in \cite{O1} and R. Meyer in \cite{M1} that $(G,L)$ is ${\mathcal{P}}$-isocohomological 
in the case $G$ admits a synchronous combing. Moreover, in \cite{O1} it was shown that isocohomologicality with arbitrary 
coefficients, or \underline{strong $\B$-isocohomologicality},(at least for $\B = \mathcal{P}$) followed for $HF^{\infty}$ groups whenever all of the 
\underline{Dehn functions} (as defined in \cite{O1}) were polynomially bounded. This last result was significantly strengthened 
by the first and third authors in \cite{JR1}, where it was established (again for $\B = \mathcal{P}$), that strong 
$\mathcal{P}$-isocohomologicality for an $FP^{\infty}$ group was actually equivalent to the existence of polynomially 
bounded Dehn functions in each degree. From this the authors were able to conclude that the comparison map (with $X=pt$), 
fails to be surjective for rather simple groups when one allows non-trivial coefficients. However, still unknown for the 
standard word-length function on $G$ and the polynomial bounding class ${\mathcal{P}}$, or more generally any bounding 
class $\B$ containing the linear polynomials $\mathcal{L}$, were answers to the following questions:

\begin{itemize}
\item[Q1.] Is the comparison map $\Phi_{\B}^* = \Phi^*_{\B}(G): {\B}H^*(G)\to H^*(G)$ always surjective?

\item[Q2.] Is $\Phi^*_{\B}(G)$ always injective?

\item[Q3.] If $G$ is an $HF^{\infty}$ group, are the higher Dehn functions of $G$ ${\mathcal{P}}$-equivalent to the first Dehn function of $G$?
\end{itemize}

In fact, it was conjectured by the second author in \cite{O1} that the answer to the third question was ``yes", given that all known 
examples at the time suggested this to be the case. Nevertheless, one of the consequences of the results of this paper is

{\bf\underline{Theorem A}} The answer to each of these questions is ``no".

Precisely, we show

\begin{itemize}
\item[A1.] (following Gromov) There exists a compact, closed, orientable $3$-dimensional solvmanifold $M_1^3\homotopic K(G_1,1)$ 
	with a $2$-dimensional class $t_2\in H^2(M_1) = H^2(G_1)$ which is not $\B$-bounded for any 
	${\B}\prec {{\mathcal{E}}}$, the bounding class of simple exponential functions.

\item[A2.] There exists a compact, closed, orientable $5$-dimensional solvmanifold $M_2^5\homotopic K(G_2,1)$, where the first 
	Dehn function of $G_2$ is quadratic, but the second Dehn function is at least simple exponential.

\item[A3.] If $\B$ is a bounding class with ${\B}\succeq {{\mathcal{L}}}$, and $G$ is a finitely-presented $FL$ group (meaning $BG$ 
	is homotopy-equivalent to a finite complex) for which the comparison map $\Phi^*_{\B}$ is not surjective with respect to the standard 
	word-length function (as in (A1.)), there is another discrete group ${\mathfrak F}(G)$ depending on $G$ ``up to homotopy" for which 
	the comparison map is not injective with respect to the standard word-length function on ${\mathfrak F}(G)$. Moreover, if 
	${\B} \succeq {\mathcal P}$, then ${\mathfrak F}(G)$ can be taken to be also of type $FL$.
\end{itemize}

Somewhat surprising is the sharpness of these results. For (A1.), this is the simplest type of finitely-presented group and smallest 
cohomological dimension in which surjectivity with trivial coefficients can fail. In the case of (A2.), we note that a linear first 
Dehn function implies the group is hyperbolic, in turn implying that all of the higher Dehn functions are also linear. Moreover, there 
is an isoperimetric gap that occurs between degree one and two, so that if the first Dehn function is not linear, it must be at least 
quadratic - the smallest degree for which the second Dehn function could be non-polynomial, or even non-linear. Finally, (A3.) provides, 
for each bounding class $\B \succeq \mathcal{P}$ an injection from the set of (isomorphism classes of) finitely-presented $FL$ groups 
with non-surjective comparison map to the set of (isomorphism classes of) finitely-presented $FL$ groups with non-injective comparison map.

Up until now, the main problem in studying either $\B$-isocohomologicality or strong $\B$-isocohomologicality has been the absence of appropriate computational tools.  The difficulty in extending classical techniques lies in the analysis of the group structures and the geometry of the associated Cayley graph. The primary goal of 
this paper is to develop systematic methods for calculating $\B$-bounded cohomology, and to establish isocohomologicality for a good
class of groups. 
One such class consists of group extensions where the normal and quotient groups are isocohomological with respect to the restricted and quotient
length functions induced by the length function on the middle group.  The other 
main class of groups considered are those associated with developable complexes of groups; this includes the class
of relatively hyperbolic groups.  The main computational techniques introduced are the Hochschild-Serre spectral sequence in $\B$-bounded cohomology
associated to an extension of groups equipped with length functions, and the Serre spectral sequence in $\B$-bounded 
cohomology for developable complexes of groups.  As has been noted by Meyer in \cite{M2}, the category in which one does
homological algebra in the bornological framework is almost never abelian, which makes the extension of homological techniques from the non-bounded to the bounded setting problematic.  Among the results included below, we have

{\bf\underline{Theorem B}} 
Let $G$ be a finitely presented group acting cocompactly on a contractible simplicial complex $X$ without inversion, with finitely generated stabilizers 
$G_\sigma$ for each vertex $\sigma$ in $X$, and with finite edge stabilizers. Suppose also that $X$ is equipped with the 
$1$-skeleton weighting, and all of its higher 
weighted Dehn functions are $\B$-bounded.  Then if each $G_\sigma$ is strongly $\B$-isocohomological, $G$ is as well.

In \cite{Os} and \cite{BC} the notion of the first unweighted `relative Dehn function' is introduced for a group relative to a family of subgroups.  
This relative Dehn function is well-defined for developable complexes of groups with finite edge stabilizers; in particular for relatively 
hyperbolic groups.  Intuitively, the unweighted relative Dehn functions bound `relative fillings' of relative cycles in a 
`relatively contractible space'.  Thus, one expects that the comparison map from $\B$-bounded relative cohomology to non-bounded relative cohomology
is an isomorphism when the relative Dehn functions are appropriately bounded; i.e. the group is \underline{relatively isocohomological} with
respect to the family of subgroups.  

Now it should be noted that the existence of Dehn functions, even in the absolute setting, is not guaranteed by the existence of a 
nice resolution. In general, one needs to work with weighted Dehn functions where the weighting degreewise is either equivalent to a 
weighted $\ell^1$-norm associated to a proper weight function on an additive set of generators, or is one over which there exists 
some geometric control. Using the 1-skeleton weighting, we show

{\bf\underline{Theorem C}} 
Suppose the finitely presented group $G$ is $FP^\infty$ relative to a finite family of 
finitely generated subgroups $\RH$.  Then the following are equivalent.
\begin{enumerate}
	\item[(1)] The weighted relative Dehn functions of $EG$ relative to $E\RH$ are $\B$-bounded.
	\item[(2)] $G$ is strongly $\B$-isocohomological relative to $\RH$.
	\item[(3)] The comparison map $\B H^*(G,\RH;A) \to H^*(G,\RH; A)$ is surjective for all bornological
					$\BG$-modules $A$.
\end{enumerate}

We should also remark that, as in the case of relative group cohomology \cite{Au,BE2}, there is a long-exact sequence in $\B$-bounded cohomology 
relating the bounded cohomologies of the subgroups and the group with the $\B$-bounded relative cohomology of the pair.  

An outline of the paper is as follows. 

In section 2, we recall from \cite{JOR1} some basic terminology regarding bounding classes, and the setup for defining the $G$-equivariant 
$\B$-bounded cohomology of bornological algebras and weighted complexes. We also define what we mean by a \underline{Dehn function} in this 
paper. For $FP^\infty$ groups, 
there are a number of different ways for defining Dehn functions, the most natural being algebraically defined Dehn functions which take into account the action of the group $G$. Also in this section we construct 
some basic pairing operations between $\B$-bounded homology and cohomology that are used later on. 

In section 3, we show\footnote{This result has also recently been obtained in \cite{BRS}, using more geometric techniques.} that asynchronously 
combable groups are type $HF^{\infty}$, via an explicit coning argument that allows us in section 3.2 to show that if the combing lengths are 
$\B$-bounded, so are all of the Dehn functions of $G$. We also extend the main result of \cite{JR1} to arbitrary bounding classes $\B$.

In section 4, we generalize the constructions and results of section 3 to the relative context. We begin by establishing a relative version of 
the Brown and Bieri-Eckmann conditions used to establish homological or homotopic finiteness through a given degree. In section 4.2, we introduce 
the higher dimensional relative Dehn functions in several different, but equivalent ways.  In section 4.3 the notion of relative $\B$-bounded cohomology is
developed and shown to fit into a long-exact sequence similar to the long-exact sequence in non-bounded relative group cohomology.  The notion of 
relative  $\B$-isocohomologicality is introduced and related to the higher relative Dehn functions.  This relationship
is then examined in the case of relatively hyperbolic groups and groups acting on complexes.

In section 5, we construct in ${\B}H^*( \cdot )$ the i) Hochschild-Serre spectral sequence associated to an extension 
of groups with word-length, and ii) the spectral sequence associated to a group acting on a complex. These spectral sequences closely resemble 
their non-bounded counter-parts, and, as we mentioned above, provide the main tools for computing $\B$-bounded cohomology.

Finally, in section 6, we examine specific examples, beginning with duality groups (in the sense of Bieri-Eckmann). A striking fact, proved in 
section 6.2, is that $\B$-isocohomologicality for an oriented $n$-dimensional Poincare Duality group $G$ is guaranteed by the $\B$-boundedness 
of a single cohomology class in $H^n(G\times G)$ coming from the restriction of the Thom class for the diagonal embedding, i.e., the ``dual" 
fundamental class of $G$. For fundamental groups of compact oriented manifolds with connected boundary, $\B$-isocohomologicality is guaranteed 
by a $\B$-bound on two separate cohomology classes.  In section 6.3, we introduce the notion of a $\B$-duality group, and show that when the 
fundamental homology class in $\B$-bounded homology is in the image of the homology comparison map, the cohomology comparison map is injective 
for all coefficients. Finally, in sections 6.4 and 6.5, we discuss the examples mentioned in (A1.) - (A3.) above.

The second author would like to thank Ian Leary for an illuminating remark regarding \cite{L}.  The first and 
third authors are grateful to Denis Osin for his communications relating to complexes of groups, relative Dehn
functions, and the Arzhantseva-Osin example of an exponential growth solvable group that has quadratic
first Dehn function, \cite{AO}.  The authors would also like to thank the referee for their careful reading of
this paper and for their helpful suggestions.


\section{Preliminaries}
We discuss some constructions and terminology that will be used throughout the paper.

\subsection{Bounding Classes} 

Let ${\mathcal{S}}$ denote the set of non-decreasing functions
$\{f:\R_+\to \R^+\}$. Suppose $\phi:{{\mathcal{S}}}^n\to
{{\mathcal{S}}}$ is a function of sets, and ${\B}\subset {{\mathcal{S}}}$.
We will say that $\B$ is \underline{weakly closed} under the
operation $\phi$ if for each $n$-tuple $(f_1,\dots,f_n)\in {\B}^n$, 
there exists an $f\in {\B}$ with $\phi(f_1,f_2,\dots,f_n)
<  f$.  A \underline{bounding class} then is a subset of ${\B}
\subset {{\mathcal{S}}}$ satisfying
\begin{itemize}
\item[(${\B}C$1)] it contains the constant function $1$,
\item[(${\B}C$2)] it is weakly closed under the operations of taking positive
rational linear combinations
\item[(${\B}C$3)] it is weakly closed under the operation $(f,g)\mapsto
f\circ g$ for $g\in{\mathcal{L}}$, where ${\mathcal{L}}$ denotes the linear
bounding class $\{f(x) = ax+b\ |\ a,b\in \Q_+\}$.
\end{itemize}
Naturally occurring classes besides ${\mathcal{L}}$ are ${\B}_{min} =
\{\Q_+\}$, ${\mathcal{P}} =$ the set of polynomials with non-negative
rational coefficients, the set ${{\mathcal{E}}} = \{e^f\ ,\ f\in
{\mathcal{L}}\}$, and ${\B}_{max} = {\mathcal{S}}$. A
bounding class is \underline{multiplicative} if it is weakly closed
under multiplication, and \underline{composable} if it is weakly
closed under composition. More generally, given bounding classes $\mathcal B$ and ${\mathcal B}'$, we say 
$\mathcal B$ is a \underline{left resp.\ right ${\mathcal B}'$-class} if $\mathcal B$ is weakly closed under 
left resp.\ right composition with elements of ${\mathcal B}'$ (thus, for example, all bounding classes are 
right $\mathcal L$-classes by (${\B}C$3)).

Basic properties of bounding classes were discussed in \cite{JOR1}; for technical reasons only composable 
bounding classes were considered in that paper, however, all of the results of  of [JOR1, \S 1.2] apply for 
this larger collection of classes. We write ${\B}'\preceq {\B}$ if every $f\in{\B}'$ is bounded above by 
some element $f\in{\B}$, with equivalence ${\B}'\sim {\B}$ if ${\B}'\preceq {\B}$ and ${\B}\preceq {\B}'$. 
Finally, ${\B}'\prec {\B}$ if ${\B}'\preceq {\B}$ but ${\B}'$ is not equivalent to $\B$.

A bounding class $\B$ is \underline{weakly countable} if there is a countable bounding class $\B'$
with $\B \sim \B'$.  
\begin{convention}
All bounding classes considered will be assumed weakly countable.
\end{convention}
Since equivalent bounding classes yield isomorphic results in what follows,
this amounts to working with countable bounding classes.


Given a weighted set
$(X,w)$ and $f\in{\mathcal{S}}$, the seminorm $|\,\, |_f$ on
$\Hom(X,\C )$ is given by $|\phi |_f := \sum_{x\in X}
|\phi(x)|f(w(x))$. Then $H_{{\B},w}(X) = \{f:X\to\C \ |
\ |\phi |_f < \infty\,\, \forall f\in{\B}\}$. The most important feature of $H_{{\B},w}(X)$ is that it is an
algebra whenever $X$ is a semi-group and $w$ is sub-additive with
respect to the multiplication on $X$. If $X$ has a unit, then so
does $H_{{\B},w}(X)$. We will mainly be concerned with the
case $(X,w) = (G,L)$ is a discrete group equipped with a length
function $L$ (meaning a function $L:G\to\R_+$ subadditive
with respect to the multiplication on $G$, and invariant under the
involution $g\mapsto g^{-1}$). The length function $L$ is
called a \underline{word-length function} (with respect to a
generating set $S$) if $L(1) = 1$ and there is a function
$\phi:S\to \R^+$ with
\[
L(g) = min\left\{\sum_{i=1}^n \phi(x_i)\ |\ x_i\in S,\, x_1x_2\dots x_n = g\right\}
\]
When the generating set $S$ is finite, taking $\phi = 1$ produces
the standard word-length function on $G$.


\subsection{The $FP^{\alpha}$ and $HF^{\alpha}$ conditions}

In this paper the term \underline{complex} will refer either to a simplicial complex, polyhedral complex, or simplicial 
set. For a complex $X$, we say $X$ is \underline{type $HF^{\alpha}$} ($\alpha\leq \infty$) if $|X|\homotopic |Y|$ where 
$Y$ is a $CW$ complex with finitely many cells through dimension $\alpha$. This notion clearly defines an equivalence 
relation on the appropriate category of complexes, and admits an equivariant formulation: for a discrete group $G$ 
which acts either cellularly or simplicially, a $G$-complex $X$ is \underline{type $G-HF^{\alpha}$} if there is a 
strong $G$-homotopy equivalence $X\homotopic Y$ with $Y$ having finitely many $G$-orbits through dimension $\alpha$. 
A group $G$ is type $HF^{\alpha}$ if its classifying space $BG$ is type $HF^{\alpha}$, or equivalently, if $EG$ is type $G-HF^{\alpha}$. 

When $\alpha$ is finite and $X$ is a simplicial complex resp. simplicial set resp. polyhedral complex, the $HF^{\alpha}$ condition is equivalent to saying $X\homotopic Y$ a simplicial complex resp. simplicial set resp. polyhedral complex with $Y^{(\alpha)}$ finite. When $\alpha = \infty$ and $X$ is a polyhedral complex, then $X$ is type $HF^{\infty}$ iff $X\homotopic Y$ a polyhedral complex with $Y^{(n)}$ finite for all $n < \infty$. However, if $X$ is either a simplicial set or simplicial complex, the $HF^{\infty}$ condition is equivalent to the weaker statement $X\homotopic Y = \varinjlim Y_n$ a direct limit of simplicial sets resp. simplicial subcomplexes, with the inclusion $Y_n\hookrightarrow Y$ inducing an $n$-connected map of spaces $|Y_n|\hookrightarrow |Y|$.

For discrete groups, the condition $HF^{\alpha}$ is equivalent to requiring that $G$ is finitely presented and type 
$FP^{\alpha}$. The standard $FP^{\alpha}$ condition - that $\Z$ admits a resolution over $\Z [G]$ 
which is finitely-generated projective through dimension $\alpha$ - is now known to be strictly weaker than requiring 
$G$ to be $HF^{\alpha}$ when $G$ is not finitely-presented. Because the framework used in this paper for defining Dehn 
functions is algebraic, our primary focus will be on discrete groups of type $FP^{\alpha}$. Again, we remind the reader that $FP^{\alpha}$ is equivalent to $FF^{\alpha}$ - the condition that $\Z$ admits a resolution over $\Z [G]$ by free $\Z [G]$-modules which
which are finitely-generated through dimension $\alpha$


\subsection{$\B$-homology and $\B$-cohomology of algebras}

There are a number of different settings in which one can develop
the theory of $\B$-bounded cohomology for non-discrete
algebras. For arbitrary $\B$, the most natural is the bornological
framework introduced by \cite{M2, M3}. Given a bornological
algebra $A$ and bornological $A$-modules $M$ and $N$, the derived
functors $\Tor^A_*(M,N)$ and $\Ext_A^*(M,N)$ are computed in the
bornological category using standard constructions from homological
algebra, with the constraint that projective or injective
resolutions used are contractible via a bounded linear
contraction. 

Before defining homology and cohomology, we want to point out that
in both cases there is a ``bornologically correct''
reduced theory and an ``algebraically correct'' unreduced
theory. In general, given a chain complex $(C_*,d_*)$, or a
cocomplex $(D^*,d^*)$, of bornological vector spaces with
bounded differential, we distinguish between the algebraic
(co)homology groups
\[
H_*^a(C_*) := \{H_n(C_*) =\ker(d_n)/ \im(d_{n+1})\},\quad
H^*_a(D^*):= \{H^n(D^*) = \ker(d^n)/\im(d^{n-1})\}
\]
and the bornological (co)homology groups
\[
H_*^b(C_*) := \{H_n(C_*) = \ker(d_n)/\overline{\im(d_{n+1})}\},\quad
H^*_b(D^*) := \{H^n(D^*) = \ker(d^n)/\overline{\im(d^{n-1})}\}
\]

Thus given a bornological algebra, $A$, with unit, bornological
$A$-modules $M_1$, $M_2$,  projective resolutions $P^i_{\bullet}$
of $M_i$ over $A$, and an injective resolution $Q_{\bullet}$ of
$M_2$ over $A$
\begin{gather}
\Tor_*^{A,x}(M_1,M_2) :=
H^x_*(P^1_{\bullet}\widehat{\underset{A}\tensor} M_2) =
H^x_*(M_1\widehat{\underset{A}\tensor} P^2_{\bullet}),\,\,x = a,b\\
\Ext^*_{A,x}(M_1,M_2) := H^*_x(\bHom_A(P^1_{\bullet},M_2)) =
H^*_x(\bHom_A(M_1,Q_{\bullet})),\,\, x = a,b
\end{gather}
Here $\bHom_A({_-})$ denotes (in each degree) the
bornological vector space of bounded $A$-module homomorphisms.



\subsection{$\B$-homology and $\B$-cohomology of weighted complexes}

A \underline{weight function} on a set $S$ is a map of sets
$w:S\to\R_+$. Fix a weighted set $(S,w)$, and write
$\C[S]$ for the vector space over $\C$ with basis
$S$. For a bounding class $\B$, we may define seminorms on
$\C[S]$ by
\begin{equation*}
\left|\sum_{s\in S}\alpha_s s\right|_f := \sum_{s\in S} |\alpha_s| f(w(s)),\quad f\in{\B}
\end{equation*}
If $(G,L)$ is a discrete group with length function $L$, a
\underline{weighted $G$-set} is a weighted set $(S,w)$ with a
$G$-action on $S$ satisfying
\begin{equation}\label{eqn:weight}
w(gs)\leq C\cdot L(g) + w(s),\ \,\, \forall g\in G, s\in S
\end{equation}
Let $\mathcal{H}_{{\B},w}(S)$ denote the completion of $\C[S]$
with respect to the seminorms in (3). Then $\mathcal{H}_{{\B},w}(S)$
may be viewed as a bornological vector space, which is Frechet
if there exists a countable bounding class ${\B}'$ with ${\B}
\sim {\B}'$. Note that $\C[S]$ is a module over $\C[G]$ in the usual way: $(\sum\lambda_ig_i)(\sum\beta_j s_j) =
\sum_{i,j}\lambda_i\beta_jg_is_j$.

\begin{proposition} The module structure
of $\C[S]$ over the group algebra $\C[G]$ extends to
a bounded bornological $\mathcal{H}_{{\B},L}(G)$-module structure on
$\mathcal{H}_{{\B},w}(S)$.\end{proposition}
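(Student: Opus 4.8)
The plan is to verify that the module action $\C[G]\times\C[S]\to\C[S]$ is a \emph{bounded} bilinear map when source and target carry the bornologies defining $H_{\B,L}(G)$ and $H_{\B,w}(S)$, and then to extend it by boundedness. Since a subset of the completion $H_{\B,w}(S)$ (resp.\ $H_{\B,L}(G)$) is bounded exactly when it has finite diameter with respect to each seminorm $|\,\,|_f$, $f\in\B$, it suffices to exhibit, for every $f\in\B$, a bound of the shape
\[
|\eta\cdot\xi|_f\;\leq\;|\eta|_F\,|\xi|_1+|\eta|_1\,|\xi|_F,\qquad \eta\in\C[G],\ \xi\in\C[S],
\]
where $F\in\B$ depends only on $f$ and on the constant $C$ appearing in the weight condition, and $1\in\B$ is the constant function supplied by $({\B}C1)$. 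Granting this estimate the action is bounded; as $H_{\B,w}(S)$ is complete it extends uniquely to a bounded bilinear map $H_{\B,L}(G)\times H_{\B,w}(S)\to H_{\B,w}(S)$, and the module identities, being bounded equalities that already hold on the dense subspaces $\C[G]$ and $\C[S]$, persist on the completions (recall $H_{\B,L}(G)$ is a unital bornological algebra).

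So the content is the displayed inequality. Writing $\eta=\sum_g\lambda_g g$ and $\xi=\sum_s\beta_s s$, we have $\eta\cdot\xi=\sum_{g,s}\lambda_g\beta_s\,(gs)$; collecting the terms with a common value $gs$ and using the triangle inequality gives $|\eta\cdot\xi|_f\leq\sum_{g,s}|\lambda_g|\,|\beta_s|\,f(w(gs))$. Applying the compatibility inequality $w(gs)\leq C\cdot L(g)+w(s)$, the monotonicity of $f$, and the trivial bound $a+b\leq 2\max(a,b)$, we get
\[
f(w(gs))\;\leq\;f\big(2\max(C L(g),w(s))\big)\;=\;\max\!\big(f(2CL(g)),f(2w(s))\big)\;\leq\;f(2CL(g))+f(2w(s)).
\]
Choosing an integer $N\geq\max(2C,2)$ and setting $\ell(x)=Nx$, an element of $\mathcal{L}$, axiom $({\B}C3)$ furnishes $F\in\B$ with $f\circ\ell\leq F$, so that $f(2CL(g))\leq F(L(g))$ and $f(2w(s))\leq F(w(s))$. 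Substituting and factoring the double sum yields
\[
|\eta\cdot\xi|_f\;\leq\;\Big(\sum_g|\lambda_g|F(L(g))\Big)\Big(\sum_s|\beta_s|\Big)+\Big(\sum_g|\lambda_g|\Big)\Big(\sum_s|\beta_s|F(w(s))\Big)\;=\;|\eta|_F|\xi|_1+|\eta|_1|\xi|_F,
\]
which is the required bound.

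The one step that needs care is the separation of the $G$-variable from the $S$-variable inside $f$: because $\B$ is allowed to be an arbitrary bounding class, not multiplicative and not composable, one cannot directly dominate $f(a+b)$ by a product $f_1(a)f_2(b)$ or even a sum $f_1(a)+f_2(b)$ with $f_i\in\B$. The remedy is exactly the $2\max$ trick, which converts the internal sum into a single affine rescaling of one variable and then appeals to closure of $\B$ under right composition with $\mathcal{L}$, i.e.\ $({\B}C3)$. The replacement of $2C$ by an integer $N$ is forced only by the fact that $\mathcal{L}$ is defined with rational coefficients whereas $C$ need not be rational. The remaining points — finiteness of $|\,\,|_1$ on both completions (from $1\in\B$ by $({\B}C1)$), completeness of $H_{\B,w}(S)$, the standard extension of a bounded bilinear map from dense subspaces, and the verification of associativity and unitality first on $\C[G]\otimes\C[S]$ and then by density — are routine, and I would only indicate them briefly.
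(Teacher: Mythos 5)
Your argument is correct and follows essentially the same route as the paper's: the paper's proof obtains the same two-term bound $|\eta\cdot\xi|_f\le|\eta|_1\,|\xi|_{f_2}+|\eta|_{f_2}\,|\xi|_1$ by splitting the double sum into the cases $L(g)\le w(s)$ and $w(s)\le L(g)$ and bounding $f(L(g)+w(s))$ by $f(2w(s))$ or $f(2L(g))$ respectively, which is exactly your $2\max$ trick, followed by the same appeal to closure under right composition with $\mathcal L$ to produce $f_2\in\B$ with $f(2x)\le f_2(x)$. The only difference is cosmetic: you carry the weight constant $C$ and the rationality constraint on $\mathcal L$ explicitly, whereas the paper silently normalizes $C=1$.
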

\begin{proof}[Proof] This follows by the same estimates as those used to
show $\mathcal{H}_{{\B},L}(G)$ is an algebra:
\begin{gather*}
\left|\left(\sum\lambda_{g_1} g_1\right)\left(\sum\lambda_{g_2} s_2\right)\right|_f\\
= \sum_{s\in S}\left|\sum_{g_1s_2 = s}\lambda_{g_1}\lambda_{s_2}\right|f(w(s))\\
\leq \sum_{s\in S}\left(\sum_{g_1s_2 = s}\left|\lambda_{g_1}\lambda_{s_2}\right|f(L(g_1)+w(s_2))\right)\\
\leq \sum_{s\in S}\left(\sum_{g_1s_2 = s,L(g_1)\le
w(s_2)}\left|\lambda_{g_1}\lambda_{s_2}\right|f(2w(s_2))\right) +
\sum_{s\in S}\left(\sum_{g_1s_2 = s,w(s_2)\leq L(g_1)}\left|\lambda_{g_1}\lambda_{s_2}\right|f(2L(g_1))\right)\\
\leq \left|\sum\lambda_{g_1} g_1\right|_1\left|\sum\lambda_{s_2} s_2\right|_{f_2} +
\left|\sum\lambda_{g_1} g_1\right|_{f_2}\left|\sum\lambda_{s_2} s_2\right|_1< \infty
\end{gather*}
where $|_-|_1$ denotes the $\ell^1$-norm, and $f_2\in{\B}$ is
any function satisfying $f(2x)\leq f_2(x)\ \forall x$.
\end{proof}

Suppose $w$ and $w'$ are two weight functions on a set $S$.  We write $w \preceq w'$ if there exist positive constants
$A$ and $B$ with $w(s) \leq A w'(s) + B$ for all $s \in S$.  The weight functions are \underline{linearly equivalent}
if $w \preceq w'$ and $w' \preceq w$.  If $w$ and $w'$ are linearly equivalent weight functions on $S$,
then $\mathcal{H}_{{\B},w}(S) = \mathcal{H}_{{\B},w'}(S)$ for any bounding class $\B$.

A \underline{weighted simplicial set}, respectively \underline{weighted
simplicial complex}, $(X,w)$ is a simplicial set, respectively a simplicial complex, $X
= \{X_n\}_{n\ge 0}$ together with weight functions $w_n:X_n\to
\R_+$ such that for each $n$ and $n$-simplex $\sigma$,
$w_{n-1}(\sigma')\leq w_n(\sigma)$ when $\sigma'$ is a face of $\sigma$,
and (in the case of simplicial sets) $w_{n+1}(s_j(\sigma)) = w_n(\sigma)$ where
$s_j$ represents a degeneracy map\footnote{The same definition applies to polyhedral complexes, under
a mild restriction on the number of faces allowed in each
dimension.}. In both cases, we will simply refer to $(X,w)$ as
a \underline{weighted complex}. Given a discrete group with
length $(G,L)$, a \underline{weighted $G$-complex} is a
$G$-complex weighted in such a way that for each $n$, the action
of $G$ on $(X_n,w_n)$ satisfies equation (\ref{eqn:weight}) above. In this case,
completing $C_*(X)$ degreewise produces a bornological chain
complex
\[
{\B}C_*(X) := \{\mathcal{H}_{{\B},w_n}(X_n),d_n\}_{n\ge 0}
\]
When the action of $G$ on $X$ is free, the definition of the $G$-equivariant $\B$-bounded
cohomology of $X$ with coefficients in a bornological $\mathcal{H}_{{\B},L}(G)$-module $A$ is
\begin{gather}
{\B}H^*_{G,x}(X;A) := H^*_x(\Hom_{\mathcal{H}_{{\B},L}(G)}^{cont}({\B}C_*(X)_*,A)),\quad x = a,b\\
{\B}H_*^{G,x}(X;A) := H_*^x({\B}C_*(X)\underset
	{\mathcal{H}_{{\B},L}(G)}{\widehat{\tensor}} A),\quad x = a,b
\end{gather}
Note that
$\bHom_{\mathcal{H}_{{\B},L}(G)}({\B}C_*(X)_*,A) =
\bHom({\B}C_*(X)_*,A)^G$, and ${\B}C_*(X)\underset
{\mathcal{H}_{{\B},L}(G)}{\widehat{\tensor}} A$ identifies with the quotient of ${\B}C_*(X){\widehat{\tensor}} A$ 
by the closure of the image of $\{Id - g\, |\, g\in G\}$ where $g\circ (x\tensor y) = (gx\tensor g^{-1}y)$. In general 
when the action of $G$ is not free, the definition is adjusted in the usual way by first replacing these
$G$-fixed-point and $G$-orbit spaces by the larger equivariant``homotopy fixed-point'' and ``homotopy-orbit'' spaces. 
Let $EG$ denote the homogeneous bar resolution of $G$, with weight function $w(g_0,\dots,g_n) = L(g_0) + \sum_{i=1}^n
L(g_{i-1}^{-1}g_i)$. Then\footnote{the isomorphism of complexes ${\B}C_*(EG)\widehat{\tensor}{\B}C_*(X)_*\isom {\B}C_*(EG\times X)_*$ 
is a special case of a more general equivalence to be established in \cite{JOR3}.}

\begin{gather*}
\bHom({\B}C_*(X)_*,A)^{hG}:=
	\bHom_{\mathcal{H}_{{\B},L}(G)}({\B}C_*(EG),\bHom({\B}C_*(X)_*,A))\\
\isom \bHom_{\mathcal{H}_{{\B},L}(G)}({\B}C_*(EG)\widehat{\tensor}{\B}C_*(X)_*,A)\\
\isom \bHom_{\mathcal{H}_{{\B},L}(G)}({\B}C_*(EG\times X)_*,A);\\
({\B}C_*(X){\widehat{\tensor}} A)_{hG}:= 
	({\B}C_*(EG)\widehat{\tensor}{\B}C_*(X)_*)\underset
	{\mathcal{H}_{{\B},L}(G)}{\widehat{\tensor}} A\\
\isom {\B}C_*(EG\times X)_*\underset{\mathcal{H}_{{\B},L}(G)}{\widehat\tensor}A
\end{gather*}

The $G$-equivariant $\B$-bounded cohomology groups of the weighted complex $(X,w)$
with coefficients in the $\mathcal{H}_{{\B},L}(G)$-module $A$ are given as
\begin{gather*}
{\B}H^*_{G,x}(X;A) := H^*_x(\bHom({\B}C_*(X)_*,A)^{hG}),\quad x = a,b\\
{\B}H_*^{G,x}(X;A) := H_*^x({\B}C_*(X){\widehat{\tensor}} A)_{hG}),\quad x = a,b
\end{gather*}
When the action of $G$ on $X$ is free, these groups agree with those defined above.
They also agree with those given in the previous section in terms of derived functors; they
are simply equalities (1) and (2)  where $M_1$ is the
 (DG) $\mathcal{H}_{{\B},L}(G)$-module ${\B}C_*(X)$. In this context,
\begin{gather}
{\B}H^*_{G,x}(X;A)  = \Ext^*_{\mathcal{H}_{{\B},L}(G),x}({\B}C_*(X),A)\\
{\B}H_*^{G,x}(X;A) = \Tor^{\mathcal{H}_{{\B},L}(G),x}_*({\B}C_*(X),A)
\end{gather}
with ${\B}C_*(EG\times X)$ used as a canonical free resolution of ${\B}C_*(X)$
over $\mathcal{H}_{{\B},L}(G)$ when ${\B}C_*(X)$ is not free over $\mathcal{H}_{{\B},L}(G)$ (i.e.,
when the action of $G$ on $X$ is not free). the inclusion of complexes
\[
C_*(EG\times X)\hookrightarrow {\B}C_*(EG\times X)
\]
induces \underline{comparison maps}
\begin{gather}
\Phi_{\B}^*:{\B}H^*_{G,x}(X;A)\to H^*_G(X;A)\\
\Phi^{\B}_x: H_*^G(X;A)\to {\B}H_*^{G,x}(X;A)
\end{gather}
which are clearly functorial in $X,G$, and $A$.

The $\B$-bounded cohomology groups of $(X,w)$ are computed as
the cohomology of a subcomplex of $C^*(EG\times X;A)^G$ which can be difficult
to describe in general. However, when the action of $G$ is free on
$X$ and trivial on $A$, and $A$ is simply a normed vector space
(e.g., $\C$), then
\[
{\B}H^*_{G,x}(X;A) = {\B}H^*_{x}(X/G;A) = H^*_{x}({\B}C^*(X/G;A))
\]
where
\begin{gather}
{\B}C^*(X/G;A) = \{{\B}C^n(X/G;A)\}_{n\ge 0}\\
{\B}C^n(X/G;A) = \{\phi : (X/G)_n\to A\ |\ \exists f\in{\B}\ s.t.\ |\phi(\overline{x})| <
f(\overline{w}(\overline{x}))\quad\forall \overline{x}\in
(X/G)_n\}
\end{gather}

\begin{convention} Unless otherwise indicated, ${\B}H^*(_-)$ will mean ${\B}H^*_a(_-)$; more generally, 
${\B}H^*_{G}(_-)$ will mean ${\B}H^*_{G,a}(_-)$.
\end{convention}


\subsection{Dehn functions} There are two basic environments in which one can consider (higher) Dehn functions. We discuss both.
\subsubsection{The geometric setting}
Suppose $X$ is a weakly contractible complex, with boundary map $\partial$.  Any loop $\alpha$ in $X^{(1)}$ bounds a disk 
$\beta$ in $X^{(2)}$, $\partial \beta = \alpha$.  Denote the number of $n$-cells in a complex $W$ 
by $\|W\|_n$.  Set $Vol_2( \alpha ) = \min \| \beta \|_2$, where this minimum is taken over all disks $\beta$ in $X^{(2)}$ with 
$\partial \beta = \alpha$.  More generally, if $f:\alpha\to X$ is a mapping of a combinatorial $n$-sphere to $X$, there is a map 
of an $(n+1)$-ball $h:\beta\to X^{(n+1)}$ with $\partial h = f$, and the filling volume of $f$ is $Vol_{n+1}(f) = \min \| \beta \|_{n+1}$, 
where this minimum is taken over all combinatorial maps of $(n+1)$-balls $h:\beta\to X$ with boundary $f$. For each $n$, the 
\underline{$n^{\mathrm th}$ geometric Dehn function of $X$}, $d_X^n : \N \to \N$, is defined via the formula
\[
d_X^n(k) := \max Vol_{n+1}(f)
\]
where the maximum is taken over all combinatorial maps $f$ of $n$-spheres $f:\alpha\to X$ with $\| f \|_n \leq k$. In the case $n=1$, $d_X^1$ 
is often referred to as simply `the' geometric Dehn function of the complex.  These Dehn functions give a measurement of the filling 
volume of cycles in $X$, with $d_X^n$ being the $n^{th}$ \underline{unweighted} Dehn function of $X$.  Of course, these 
functions do not exist if the corresponding maximum values do not exist.  

When $X$ comes equipped with a weight $w$ on its cells, there is definition of geometric Dehn function which takes that 
weight into account.  Given a complex $W$ and combinatorial map $f:W\to X$, denote by $\| f \|_{w,n}$ the sum 
$\sum_{\sigma\in W^{(n)}} w(f(\sigma))$.  For a map $f:\alpha\to X$ of a combinatorial $n$-sphere $\alpha$ to $X$, denote 
the weighted filling volume of $f$ by $Vol_{w,n}(f) = \min \| h \|_{w,n+1}$, where this minimum is taken over all maps 
$h:\beta\to X$ of combinatorial $(n+1)$-balls to $X$ with boundary $f$. For each $n$, the \underline{$n^{th}$ weighted 
geometric Dehn function of $X$}, $d_X^{w,n} : \N \to \N$, is defined via the formula
\[
d_X^{w,n}(k) := \max Vol_{w,n+1}(f)
\]
where the maximum is taken over all combinatorial maps $f:\alpha\to X$ of $n$-spheres to $X$ with $\| f \|_{w,n} \leq k$.  
If the weight of each cell is set to one, then $d_X^n$ and $d_X^{w,n}$ are equal [Note: For certain choices of weights, 
the geometric and weighted geometric Dehn functions may be comparable. In general, however, if $X$  has geometric Dehn 
functions and weighted geometric 
Dehn functions defined in all dimensions, there need not be any particular relation between the two. If the weight function in each 
degree is a proper function on the set of simplicies, the weighted geometric Dehn functions exist in all dimensions. In general, however, 
the weighted geometric Dehn functions may fail to exist if the weight function fails to be proper in one or more dimensions].

Assume a non-weighted weakly contractible complex $X$ admits an action by a finitely generated group $G$  which is proper and 
cocompact on all finite skeleta.  Then $\{d_X^n\}_{n\ge 0}$ are referred to as the geometric Dehn functions of $G$ and denoted 
$\{d_G^n\}_{n\ge 0}$.  There is a natural way to weight $X$ so that the weighted geometric Dehn functions encode information 
about the group action.  Fix a basepoint 
$x_0 \in X^{(0)}$.  For a vertex $v \in X^{(0)}$, set $w_X(v) := d_{X^{(1)}}(x_0,v)$, the distance from $v$ 
to the basepoint in the $1$-skeleton of $X$, where each edge in the $1$-skeleton is assumed to have length $1$.  For an $n$-cell 
$\sigma \in X^{(n)}$ with vertices $(v_0, \ldots, v_k)$, 
let $w_X( \sigma ) := \sum_{i=1}^k w_X(v_i)$, the sum of the weights of the vertices.  Changing basepoints yields
different weight functions, but if for each $n$ there is a bound on the number of vertices an $n$-cell can possess,
then the two weight functions will be linearly equivalent.  We refer to this choice of assigning weights
to cells as the \underline{$1$-skeleton weighting}. For this choice of 
weight on  $X$ there is, for each $n\ge 0$, a constant $K_n$ with
\[
w_X( g \cdot \sigma ) \leq K_n\cdot \ell_G(g) +  w_X(\sigma)
\]
for all 
$n$-cells $\sigma$ (compare to (\ref{eqn:weight}) above), where $\ell_G$ denotes the word-length function of $G$ with respect to some 
fixed finite generating set.  In this case, the $G$-action can be used to compare $d_X^n$ with $d_X^{w,n}$.  Up to 
equivalence, one has \cite[Lemmas 2.3, 2.4]{JR1}
\begin{equation}\label{eqn:compareDehn}
 d_X^{w,n}(k) \leq (d_X^n(k))^2 ,\;\;\;\;\; d_X^n(k) \leq k d_X^{w,n}(k^2) 
\end{equation}

Given a finite subcomplex $b$ of $X$ equipped with a $1$-skeleton weight function $w_X$,
define the weight of $b$ to be $|b|_w := \sum_{\sigma\in b} w_X(\sigma)$. 

\subsubsection{The algebraic setting}
In the literature, it is the geometric Dehn functions that have received the most attention.  Our focus, however, will be on
the homological version of the above constructions.
Suppose $(C_*, d_*)$ is an acyclic chain complex of free $\Z$-modules.  Denote by $\{ v_i \, | \, i \in \mathcal{I} \}$, a basis 
of $C_n$ over $\Z$. Given an element $\alpha = \sum_{i \in \mathcal{I}} \lambda_i v_i$ of $C_n$, set 
$\| \alpha \|_n := \sum_{i \in \mathcal{I}} |\lambda_i|$.
For $\alpha\in C_n$ a cycle, let $Vol_{n+1}( \alpha ) := \min \| \beta \|_{n+1}$, where this minimum is taken
over all $\beta \in C_{n+1}$ with $d_{n+1}( \beta ) = \alpha$.
Define a function $d_C^n : \N \to \N$ by
\[  
	d_C^n ( k ) := \max \left\{ Vol_{n+1}( \alpha ) \, | \, d_n( \alpha ) = 0,  \| \alpha \|_n \leq k \right\}
\]
The function $d_C^n$ is the \underline{$n^{\mathrm th}$ unweighted homological Dehn function of $C_*$}.  These Dehn functions
measure the filling complexity of the chain complex $C_*$.  As before, these Dehn functions do not exist if the corresponding 
maximum values do not exist.  

When the $C_n$ come equipped with a weight function $w$ on its basis, homological Dehn functions can be defined so as to take that weight into account.
For $\alpha = \sum_{i \in \mathcal{I}} \lambda_i v_i$, let $\| \alpha \|_{w,n} := \sum_{i \in \mathcal{I}} |\lambda_i| w( v_i )$.
If $\alpha$ is a cycle, the weighted filling volume of $\alpha$ is $Vol_{w,n+1}( \alpha ) := \min \| \beta \|_{w,n+1}$, where this minimum is taken
over all $\beta \in C_{n+1}$ with $d_{n+1}( \beta ) = \alpha$.
Define the \underline{$n^{\mathrm th}$ weighted Dehn function of $C_*$}, $d_C^{w,n} : \N \to \N$, by
\[  
	d_C^{w,n} ( k ) := \max \left\{ Vol_{w,n+1}( \alpha ) \, | \, d_n( \alpha ) = 0,  \| \alpha \|_{w,n} \leq k \right\}
\]
If the weight of each basis element is set to one, then $d_C^n = d_C^{w,n}$.  For certain choices of weights,
the unweighted and the weighted Dehn functions may be comparable.  In general, however, no relationship needs
exist between the two.

Now suppose $G$ is an $FP^{\infty}$ group equipped with word-length function $L$, and $C_*$ is a  resolution of $\Z$ over $\Z [G]$ 
which in each degree is a finitely generated free module over $\Z[G]$.  In this case, the collection $\{d_C^n\}$ are referred to as 
the \underline{Dehn functions of G}, denoted $d_G^n$.  We will call the resolution $C_*$ \underline{$k$-nice} ($k \le \infty$) if 
\begin{itemize}
\item for each finite $n\le k$, $C_n = \Z [G] [T_n]$ for some finite weighted set $(T_n,w^T_n)$;
\item for each finite $n\ge 0$, $S_n$ ($ = $ the orbit of $T_n$ under the free action of $G$) is equipped  with a proper weight function $w^S_n$ satisfying
\[
C_{1,n}L(g) + w_n^T(t)\le w_n^S(gt)\le C_{2,n}L(g) + w_n^T(t)\qquad \forall g\in G,t\in T_n
\]
for positive constants $C_{1,n}\le C_{2,n}$ depending only on $n$;
\item for each finite $n$, $d^C_n: C_n\to C_{n-1}$ is linearly bounded with respect to the weight functions on $C_n$ and $C_{n-1}$.
\end{itemize}

The term ``nice'' will refer to the case $k=\infty$. The \underline{weighted Dehn functions of G} (through dimension $k$ if $k$ 
is finite) are given by $\{d_G^{w,n} := d_C^{w,n}\}_{n < k}$ where $C_*$ is a $k$-nice resolution of $\Z$ over $\Z[G]$.  Both 
$d_G^n$ and $d_G^{w,n}$ are independent of the particular choice of $k$-nice resolution used in their definition, up to linear 
equivalence. For such resolutions, the fact $T_n$ is finite for each finite $n\le k$ means that the linear equivalence classes 
of the Dehn functions $\{d_G^{w,n}\}$ are independent of the choice of weightings on $\{T_n\}$. Finally, a resolution $D_*$ of 
$\C$ over $\C[G]$ is \underline{$k$-nice} resp.\  \underline{nice} if it is of the form $D_* = C_*\otimes\C$ where $C_*$ is a 
$k$-nice resp.\ nice resolution of $\Z$ over $\Z[G]$.

\begin{lemma}\label{lem:niceExt}
Let $G$ be an $FP^{k}$ group equipped with word-length function $L$, $C_*$ a $k$-nice resolution of  $\Z$ over $\Z[G]$, 
$D_* = C_*\otimes\C$, and $\B$ and $\B'$ bounding classes.  Suppose that the weight $w$ on the weighted set underlying $C_*$ 
takes no value in $(0,1)$.  Denote by $\mathcal{B}D_*$ the corresponding Frechet completion of 
$D_*$ with respect to the bounding class $\B$, as defined above. Further suppose that the weighted Dehn functions $\{ d_C^{w,n} \}$ 
are ${\B}'$-bounded in dimensions $n <k$, that $\B$ is a right ${\B}'$-class, and that $\B\succeq {\mathcal L}$ .  Then there 
exists a bounded chain null-homotopy $\{s_{n+1} : \mathcal{B}D_n \to \mathcal{B}D_{n+1}\}_{k > n\ge 0}$, implying ${\B}D_*$ is 
a continuous resolution of $\C$ over $\BG$ through dimension $k$.
\end{lemma}

\begin{proof} 
Note first that the ``niceness'' of $C_*$ guarantees that  boundary map $d^C_n : C_n \to C_{n-1}$ extends to a 
continuous boundary map $d_n : \mathcal{B}D_n \to \mathcal{B}D_{n-1}$.  We will prove the lemma in three steps.
\begin{itemize}
\item[\underline{Claim 1}] 
For each $k > n\ge 1$, there exists a function $f_n\in {\B}'$ so that for all $\alpha\in \ker(d^C_n)$ and $h\in \B$, there exists 
$\beta_{\alpha}\in C_{n+1}$ with $d^C_n\left(\beta_{\alpha}\right) = \alpha$, and 
$\left|\beta_{\alpha}\right|_h\le (h\circ f_n)\left(\|\alpha\|_{w,n}\right)$.

\noindent {\it Proof}. 
The hypothesis on $\{d_C^{w,n}\}$ implies that for each $n<k$ there exists $f_n\in{\B}'$ with 
\[
Vol_{w,n+1}(\alpha)\le f_n(\|\alpha\|_{w,n})\qquad\forall\alpha\in \ker(d^C_n)
\]
Then for $\alpha = \sum m_{ij} g_i t_j\in \ker(d^C_n)$, we may choose $\beta_{\alpha} = \sum n_{kl}g_k s_l\in C_{n+1}$ with 
$d^C_{n+1}\left(\beta_{\alpha}\right) = \alpha$ and
\[
\left\|\beta_{\alpha}\right\|_{w,n+1} = \sum |n_{kl}|  w_{n+1}(g_k s_l)\le f_n\left(\sum |m_{ij}| w_n(g_i t_j)\right) = f_n\left(\|\alpha\|_{w,n}\right)
\]
Since $\B\succeq {\mathcal L}$, we may assume that $h\in\B$ is super-additive on the interval $[1,\infty)$. One then has
\begin{align*}
&\left|\beta_{\alpha}\right|_h = \left|\sum n_{kl} g_k s_l\right|_h\\
	&:= \sum |n_{kl}| h(w_{n+1}(g_k s_l))\\
	&\le h\left(\sum |n_{kl}|w(g_k s_l)\right)\qquad\text{by the super-additivity of $h$}\\
	&= h\left(\left\|\beta_{\alpha}\right\|_{w,n+1}\right)\\
	&\le (h\circ f_n)\left(\|\alpha\|_{w,n}\right)
\end{align*}
\hfill //

\item[\underline{Claim 2}] 
For each $k >n\ge -1$ there exists a $\B$-bounded linear section $s^C_{n+1}:C_n\to C_{n+1}$ satisfying 
$d^C_{n+1}s^C_{n+1} + s^C_n d^C_n = Id$.

\noindent{\it Proof}. 
The case $n = -1$ is trivial since any basis element of $C_0$ determines a linear injection $\Z = C_{-1}\to C_0$ which 
is bounded. Assume $s^C_n$ has been defined. Let $p_n = (Id - s^C_n d^C_n):C_n\to \ker(d^C_n)$; this projection onto 
$\ker(d^C_n)$ is bounded via the boundedness of $s_n$. Thus we may find an $f'_n\in {\B}'$ with 
$\|p_n(x)\|_{w,n}\le f'_n\left(\|x\|_{w,n}\right)$ for all $x\in C_n$. Let $f''_n = f_n\circ f'_n$. For each basis element 
$g_i t_j\in C_n$, set $s^C_{n+1}(g_i t_j) = \beta_{p_n(g_i t_j)}$, as defined in the above Claim. Then for each super-additive $h\in\B$,
\begin{align*}
&\left|s^C_{n+1}(g_i t_j)\right|_h = \left|\beta_{p_n(g_i t_j)}\right|_h\\
	&\le (h\circ f_n)\left(\|p_n(g_i t_j)\|_{w,n}\right)\\
	&\le (h\circ f_n\circ f'_n)\left\|g_i t_j \right\|_{w,n} = (h\circ f''_n)(w_n(g_i t_j))
\end{align*}
Extending $s^C_{n+1}$ linearly to all of $C_n$ yields the desired result.\hfill //

\item[\underline{Claim 3}] 
For each $k > n\ge -1$, the linear extension of $s^C_{n+1}$ to $D_n$ yields a $\B$-bounded linear map $s^D_{n+1}:D_n\to D_{n+1}$.

{\it Proof}. 
This follows from the sequence of inequalities
\begin{align*}
&\left|s^D_{n+1}\left(\sum \lambda_{ij}g_i t_j\right)\right|_h\\
	&= \sum |\lambda_{ij}|\left|s^C_{n+1}(g_i t_j)\right|_h\\
	&\le \sum |\lambda_{ij}|\left|g_i t_j\right|_{h\circ f''_n}\\
	&= \left|\sum \lambda_{ij} g_i t_j\right|_{h\circ f''_n}
\end{align*}
\hfill //
\end{itemize}
This completes the proof of the lemma. We should note that the definition of weighted Dehn function could be considered for 
more general resolutions of $\C$ over $\C[G]$ which do not arise from tensoring a nice resolution of $\Z$ over $\Z[G]$ with 
$\C$. However, for this more general class, it is likely that the statement of this lemma no longer holds true.
\end{proof}

Although it appears that not allowing your proper weight function $w$ to take values in $(0,1)$ is restrictive,
it is always possible to find a linearly equivalent proper weight function $w'$ which takes values
in $\Z_+$.  While the weight structure may change slightly, the completions arising from using the two 
weights will agree.  In many cases, for example the $1$-skeleton weighting discussed above and in \cite{O2},
the naturally occurring weight function is integral-valued.

Also, in the proof of Claim 1 above, we assumed the existence of a function $h \in \B$ which was superadditive on $[0,1)$.  To this end
suppose $f$ and $g$ are differentiable functions, and consider the following property: 

There exists a $C \geq 0$ such that for all $x \geq C$
\[
f(g(x))\ge g(f(x)).
\]

A sufficient set of conditions to guarantee this is:
\begin{enumerate}
	\item[(1)] $f(g(C))\ge g(f(C))$.
	\item[(2)] $[fg(x))]'\ge [g(f(x))]'$.
\end{enumerate}
Restrict to the case where $g$ is the linear function $g(x) = rx$, for $r\geq 1$
a real number. Then
\begin{eqnarray*}
\left(f(g(x))\right)' & = & \left(f(rx)\right)' = rf'(rx) \\
\left(g(f(x))\right)' & = & rf'(x)
\end{eqnarray*}

In this case condition (2) is just the requirement that $f'(rx)\ge f'(x),
r\geq 1$, i.e., that $f'$ is non-decreasing.  If $f(0)$ is
required to be $>0$, then taking, say, $C = 1$ this condition becomes
$f(r)\geq rf(1)$.  If $\B\succeq {\mathcal L}$,
this shows that $\B$ contains functions which are superadditive
when restricted to $[1,\infty)$.

\begin{corollary}
Let $G$ be a finitely generated group acting properly and cocompactly on a contractible polyhedral complex $X$, with finitely many
orbits in each dimension, and endowed with the $1$-skeleton weighting.  If the integral polyhedral chain complex, $C_n(X;\Z)$,
admits $\B'$ bounded weighted Dehn functions and $\B$ is a right $\B'$-class, then the Frechet completion $\B C_n(X;\C)$
gives a continuous resolution of $\C$ over $\BG$.  In particular if $C_n(X;\Z)$ admits polynomially bounded weighted Dehn functions,
so does $C_n(X;\C)$.
\end{corollary}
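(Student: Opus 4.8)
The plan is to deduce the Corollary from Lemma~\ref{lem:niceExt}; the substantive point is to recognize that the geometric hypotheses produce a \emph{nice} resolution of $\C$ (indeed of $\Z$) to which that lemma applies directly. First I would record that, since $G$ acts with finitely many cell orbits in each dimension and (as we may assume) freely on cells, each $C_n(X;\Z)$ is a finitely generated free $\Z[G]$-module on a finite set $T_n$ of orbit representatives, and contractibility of $X$ makes $(C_*(X;\Z),\partial)$, together with the augmentation $C_0(X;\Z)\to\Z$, a resolution of $\Z$ over $\Z[G]$; in particular $G$ is $FP^\infty$.

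Next I would verify the three conditions defining niceness (the case $k=\infty$) for the $1$-skeleton weighting $w_X$. Condition (i) is immediate, with $w^T_n := w_X|_{T_n}$. For (ii): $w_X$ is proper on each orbit $S_n$ because $G$ is finitely generated and acts properly and cocompactly, so balls in $X^{(1)}$ are finite and only finitely many cells carry $w_X$ below any given bound; the required two-sided control
\[
C_{1,n}\,L(g)+w^T_n(t)\ \le\ w_X(gt)\ \le\ C_{2,n}\,L(g)+w^T_n(t)
\]
holds after replacing $w_X|_{S_n}$ by a linearly equivalent proper weight if necessary (see the remark following Lemma~\ref{lem:niceExt}, and \cite{JR1,O2}): the upper bound is the inequality $w_X(g\cdot\sigma)\le K_n\,L(g)+w_X(\sigma)$ already recorded for the $1$-skeleton weighting (cf.~(\ref{eqn:weight})), and the lower bound follows from the triangle inequality in $X^{(1)}$ together with the fact that the orbit map $G\to X^{(1)}$ is a quasi-isometry. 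For (iii): a face $\sigma'$ of a cell $\sigma$ satisfies $w_X(\sigma')\le w_X(\sigma)$ since its vertex set is contained in that of $\sigma$, and cocompactness bounds the number of faces of any cell, so $\partial_n$ is linearly bounded for the weightings on $C_n$ and $C_{n-1}$. Finally $w_X$ takes values in $\Z_{\ge 0}$, hence no value in $(0,1)$, meeting the standing hypothesis on the weight in Lemma~\ref{lem:niceExt} (alternatively, pass to an integral weight as in the remark after that lemma).

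With this in hand I would apply Lemma~\ref{lem:niceExt} with $k=\infty$: the weighted Dehn functions $\{d^{w,n}_{C(X;\Z)}\}$ are $\B'$-bounded by hypothesis, $\B$ is a right $\B'$-class by hypothesis, and $\B\succeq\mathcal L$, so the lemma supplies a bounded chain null-homotopy $\{s_{n+1}:\B C_n(X;\C)\to\B C_{n+1}(X;\C)\}_{n\ge 0}$; combined with the continuity of $\partial$ guaranteed by niceness, this says exactly that $\B C_*(X;\C)=\B D_*$ is a continuous resolution of $\C$ over $\BG$. The final assertion is the case $\B=\B'=\mathcal P$: $\mathcal P$ is composable, hence a right $\mathcal P$-class, and $\mathcal P\succeq\mathcal L$, so the hypotheses hold, and Claim~1 in the proof of Lemma~\ref{lem:niceExt} (with the $\C$-linear extension of Claim~3) then gives the sections $s_{n+1}$ with weight bounds in $\mathcal P$, which is precisely the statement that $C_*(X;\C)$ has polynomially bounded weighted Dehn functions. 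I expect the only real obstacle to be the middle step --- converting ``proper cocompact action with the $1$-skeleton weighting'' into the precise niceness estimates, in particular the lower bound $w_X(gt)\gtrsim L(g)$ via the quasi-isometry $G\to X^{(1)}$; after that, everything is formal.
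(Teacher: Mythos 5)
Your proof is correct and follows the route the paper intends: the corollary is stated without an explicit proof precisely because it is meant to be an instance of Lemma~\ref{lem:niceExt}, and your verification that the $1$-skeleton-weighted polyhedral chain complex is a \emph{nice} resolution (finitely generated freeness in each degree, the two-sided linear estimate on $w_X(gt)$ via Svarc--Milnor, linear boundedness of $\partial$, integrality of the weight) is exactly the content that has to be supplied before invoking the lemma. Two minor points worth flagging, neither of which is a gap in your argument: first, the statement says ``properly and cocompactly'' rather than ``freely,'' but the niceness condition $C_n=\Z[G][T_n]$ does require freeness of the action on cells, so your parenthetical ``as we may assume'' is really a hypothesis the paper leaves implicit; second, for the lower bound $C_{1,n}L(g)+w_n^T(t)\le w_n^S(gt)$ it is cleanest to take $w_n^S(gt):=L(g)+w_n^T(t)$ (which tautologically satisfies the two-sided estimate) and then observe that this is linearly equivalent to the $1$-skeleton weight by cocompactness and Svarc--Milnor, so that $\B'$-boundedness of the weighted Dehn functions is unaffected by the change of weight (using $(\B C1)$--$(\B C3)$); your formulation via the quasi-isometry $G\to X^{(1)}$ amounts to the same thing but the exact inequality does not hold literally for $w_X$ itself, only up to linear equivalence. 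With those readings, the argument is complete, and your handling of the ``in particular'' clause via Claims~1--3 of the lemma (noting $\mathcal P$ is composable, hence a right $\mathcal P$-class, and $\mathcal P\succeq\mathcal L$) is the intended deduction.
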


\underline{\bf Remarks}
\begin{itemize}
\item It is a result due to Gersten that for finitely-presented groups, the first algebraic and first geometric Dehn functions 
are equivalent. However, in dimensions greater than one, it is not at all clear if such a relation persists even when both types 
are defined. The one case in which one can prove an equivalence is when there is a $G-HF^{\infty}$ model for $EG$ admitting an 
appropriate ``coning'' operation in all dimensions with explicitly computable bounds on the number and weights of the simplices 
used in coning off a simplex of one lower dimension (such is the case when $G$ is asynchronously combable - see below).
\item For finitely generated groups, word-hyperbolicity is equivalent to having $d_G^1$ bounded by a linear function. Hyperbolic 
groups provide interesting phenomena in the context of Dehn functions.  A prime example is the isoperimetric gap.  If $d_G^1$ is 
bounded by a function of the form $n^r$ with $r < 2$, then $d_G^1$ is bounded by a linear function \cite{Gromov, Olsh}.  In particular 
if $G$ is not hyperbolic, then $d_G^1$ must be at least quadratic. On the other hand, it is well known that for a hyperbolic group $G$, 
the functions $d_G^n$ are linearly bounded in every dimension $n$. The geometric characterization of the isocohomological property 
discussed in Section \ref{sect:BIsocoh} below  implies that $d_G^{w,n}$ are all linearly bounded Dehn functions, providing a bounded 
version of the $FP^{\alpha}$ condition described above.
\end{itemize}

This idea of combining boundedness with the $FP^{k}$ condition is made precise by
\begin{definition} 
Given a bounding class $\B$ and a group with word-length $(G,L)$, we say $G$ is of type $\B FP^k$ ($k < \infty$) if it is of type $FP^k$, and there exists a $k$-nice resolution $D_*$ of $\C$ over $\C[G]$ for which the completion ${\B}D_*$  admits a bounded linear chain contraction through dimension $k$.  We say $G$ is $\B FP^\infty$ if it is $\B FP^k$ for all $k$.
\end{definition}



\subsection{Products, coproducts and pairing operations}\label{sect:pairings}

\begin{definition}
Let $(X,w)$ be a weighted set.  A bounding class $\B$ is \underline{nuclear for $(X,w)$} if for every $\lambda \in \B$
there is $\eta \in \B$ such that the following series converges.
\[\sum_{x\in X} \frac{\lambda\left( w(x) \right)}{\eta\left( w(x) \right)} \]
\end{definition}

For example, if $(X,w)$ has polynomial growth then $\mathcal{P}$ is a nuclear bounding class, but if $(X,w)$ has exponential 
growth $\mathcal{P}$ is not nuclear.  The exponential bounding class is nuclear for every finitely generated group with word-length.  
The following lemma motivates this definition.

\begin{lemma}
Let $(G,L)$ be a group with a proper length function, and let $\B$ be a nuclear bounding class for $(G,L)$.  Then
$\BG$ is a nuclear Frechet algebra.  
\end{lemma}

For a bornological space $V$, let $V'$ denote the dual space.  Our interest in nuclearity arises from  
its use in identifying $\left( V \btensor W \right)'$.
\begin{lemma}
Let $V$ and $W$ be Frechet spaces, and let $W$ be nuclear.  Then $\left( V \btensor W \right)' = V' \btensor W'$. 
\end{lemma}
\begin{proof}
By definition, $\left( V \btensor W \right)' = \bHom( V \btensor W, \C)$.  Using the adjointness of the projective tensor
product, this is isomorphic to $\bHom(V, \bHom( W, \C ) ) = \bHom( V, W')$.
As $W$ is a nuclear Frechet algebra, its dual is also nuclear.  Corollary 1.161 of \cite{M4} gives that $\bHom(V, W') \isom V' \btensor W'$.
\end{proof}

This Lemma suggests that any sort of K\"unneth Theorem in $\B$-bounded cohomology would hold only under very restrictive conditions. Nevertheless, the pairing operations used to prove it exist in the $\B$-bounded setting under minimal conditions. We consider them next, as they will be needed later on.
\vskip.2in
Let $(X,w_X)$ and $(Y,w_Y)$ be a weighted 
simplicial sets. Then their product $(X\times Y,w_X\times w_Y)$ is again a weighted simplicial set, where $X\times Y$ is equipped with diagonal simplicial structure. It follows from the definition of a weighted complex that the Alexander-Whitney map
\[
\Delta_{AS}:C_*(X\times Y)\to C_*(X)\tensor C_*(Y)
\] 
is uniformly bounded above in each dimension $n$ by a linear function of $n$. As a result, when $\B$ is multiplicative this induces an exterior \underline{algebraic} tensor product on the cochain level
\begin{equation*}
{\B}\Delta^*:\to {\B}C^*(X)\tensor {\B}C^*(Y)\to {\B}C^*(X\times Y)
\end{equation*}


From the definitions of $H^*(_-)$ and $H_*(_-)$ there is an obvious Kronecker-Delta pairing
\begin{equation*}
{\B}H^*(X)\tensor {\B}H_*(X)\to\C \,\quad (c,d)\mapsto <c,d>,\quad x = a,b
\end{equation*}
More generally, an analysis of the standard cap product operation on the chain and cochain level yields a cap product operation
\begin{equation*}
{\B}H^*(X)\tensor {\B}H_*(X)\to {\B}H_*(X),\quad (c,d)\mapsto c\cap d,\quad x = a,b
\end{equation*}

These operations satisfy the appropriate commuting diagrams with respect to the comparison map $\Phi_{\B}^*$ and 
$\Phi^{\B}_*$, leading to the identities
\begin{gather}
<\Phi_{\B}^*(c),d> = <c,\Phi^{\B}_*(d)>,\quad c\in {\B}H^*(X),\ \ d\in H_*(X)\\
\Phi_{\B}^*(c)\cap d = c\cap \Phi^{\B}_*(d),\quad  c\in {\B}H^*(X),\ \ d\in H_*(X)
\end{gather}


\section{$\B$ cohomology of discrete groups}

\subsection{Combable groups}
Call a function $\sigma : \mathbb{N} \rightarrow \mathbb{N}$ a \underline{reparameterization} if
\begin{itemize}
    \item $\sigma(0) = 0$,
    \item $\sigma(n+1)$ equals either $\sigma(n)$ or $\sigma(n)+1$,
    \item $\underset{n\to\infty}{\lim}\sigma(n) = \infty$
\end{itemize}

\begin{definition} 
Let $(X,*)$ be a discrete metric space with basepoint.
By a \underline{combing} of $X$ we will mean a collection of
functions $\{f_n:X\to X\}_{n\geq 0}$ satisfying
\begin{itemize}
\item[(C1)] $f_0(x) = x\ \ \forall x\in X$
\item[(C2)] There exists a super-additive function $\psi$ such that $\forall x,y\in X$,\,\,
there are reparameterizations $\sigma$ and $\sigma'$ with $d(f_{\sigma(n)}(x),f_{\sigma'(n)}(y)) \le
\psi(d(x,y))$ for all $n\ge 0$.
\item[(C3)] $\exists \lambda$ such that $\forall x\in X, n\in \mathbb N$, $d(f_n(x),f_{n+1}(x))\leq \lambda$
\item[(C4)] $\exists \phi$ such that $f_n(x) = *\,\,\forall n\geq \phi(d(x,*))$
\end{itemize}
\end{definition}
Remarks:
\begin{itemize}
\item As noted in the introduction, the combings above are oriented in the opposite direction than what has been customarily the case.
\item Axiom (C2) allows for what are typically referred to as
asynchronous combings, with synchronous combings corresponding to the case
that the reparameterizations are the identity maps. Note also that
\item the reparameterizations $\sigma,\sigma'$ in {\it (C2)} depend on $x$ and $y$.
\end{itemize}

\begin{definition} 
Given a discrete group $G$ equipped with a (proper) length function $L$,
a \underline{combing of $G$} (with respect to $L$), or $(G,L)$, is a combing of the discrete
metric space $(G,d_L)$, where $d_L(g_1,g_2) := L(g_1^{-1}g_2)$.
\end{definition}

We first show
that reparameterizations can be chosen so as to be compatible on
specific $(n+1)$-tuples.

\begin{lemma}\label{lem:asynchCompatible} 
Suppose $(G,L)$ admits an asynchronous
combing in the above sense. Then for all $(m+1)$-tuples
$(g_0,\dots,g_m)\in G^{(m+1)}$, there exist reparameterizations
$\sigma_{0},\dots, \sigma_{m}$ such that
\begin{equation*}
\forall n\geq 0,\ d(f_{\sigma_{i}(n)}(g_i),f_{\sigma_{i+1}(n)}(g_{i+1})) \leq \psi(d(g_i,g_{i+1}))
\end{equation*}
\end{lemma}

\begin{proof} By definition it is true for $m=1$. Assume then it is true for
fixed $m\geq 1$. Given an $(m+2)$-tuple $(g_0,\dots,g_{m+1})$, we may assume by induction that

\begin{itemize}
\item There exist reparameterizations $\sigma_{0},\dots, \sigma_{m}$ with
$d(f_{\sigma_{i}(n)}(g_i),f_{\sigma_{i+1}(n)}(g_{i+1})) \le
\psi(d(g_i,g_{i+1})$ for all $n\geq 1$, and
\item There exist reparameterizations $\sigma'_m,\sigma'_{m+1}$ with
$d(f_{\sigma'_m(n)}(g_m),f_{\sigma'_{m+1}(n)}(g_{m+1})) \leq \psi(d(g_m,g_{m+1}))$
for all $n\geq 1$
\end{itemize}

We need to show that the reparameterization functions can be
further reparameterized so as to synchronize $\sigma_m$ and
$\sigma'_m$. For this we proceed by induction on $k\in \mathbb N =
$ the domain of the reparameterization functions. 

\begin{itemize}
    \item [$\underline{k = 0}$] By definition, $\sigma_m(0) = \sigma'_m(0) = 0$.
    \item [$\underline{k > 0}$] Suppose $\sigma_m(i) = \sigma'_m(i)$ for $0\leq i\leq k$.
    \begin{itemize}
        \item [\underline{Case 1}] $\sigma_m(k+1) = \sigma'_m(k+1)$.
        In this case there is nothing to do.
        \item [\underline{Case 2}] $\sigma_m(k+1) = \sigma_m(k), \sigma'_m(k+1) =
        \sigma'_m(k) + 1$. In this case we leave $\sigma_m$ alone, and redefine
        $\sigma'_m, \sigma'_{m+1}$:
            \begin{equation*}
            \text{for }l = m,m+1, (\sigma'_l)_{new}(i) =
                \begin{cases}
                (\sigma'_l)_{old}(i)\, &0\leq i\leq k\\
                (\sigma'_l)_{old}(i)-1\, &k+1\leq i
                \end{cases}
            \end{equation*}
        \item [\underline{Case 3}] $\sigma_m(k+1) = \sigma_m(k) + 1, \sigma'_m(k+1) =
        \sigma'_m(k)$. In this case we leave $\sigma'_m$ alone, and redefine
        $\sigma_0,\dots, \sigma_m$:
            \begin{equation*}
            \text{for }0\leq l\leq m, (\sigma_l)_{new}(i) =
                \begin{cases}
                (\sigma_l)_{old}(i)\, &0\leq i\leq k\\
                (\sigma_l)_{old}(i)-1\, &k+1\leq i
                \end{cases}
            \end{equation*}
    \end{itemize}
\end{itemize}
Thus by induction on $k$, we may choose reparameterization functions
$\sigma_0,\dots,\sigma_m,\sigma'_m, \sigma'_{m+1}$ with
\begin{itemize}
    \item [(S1)] $\sigma_0,\dots,\sigma_m$ satisfying the conditions of the Lemma for
    the $(m+1)$-tuple $(g_0,\dots,g_m)$,
    \item [(S2)] $\sigma'_m,\sigma'_{m+1}$ satisfying the conditions of the Lemma for
    the pair $(g_m,g_{m+1})$,
    \item [(S3)] $\sigma_m = \sigma'_m$
\end{itemize}
Setting $\sigma_{m+1} := \sigma'_{m+1}$ then concludes the proof of the initial induction
step, and hence of the Lemma.
\end{proof}

\begin{definition} 
We say that a metric space $(X,d)$ is \underline{quasi-geodesic} if
there exist positive constants $\epsilon$, $S$, and $C$ such that
for any two points $x$,$y \in X$, there is a finite sequence of
points $x_0 = x, x_1, x_2, \ldots, x_k = y$ satisfying:
\begin{enumerate}
	\item $\epsilon \leq d( x_i, x_{i+1}) \leq S$ for $i = 0, 1, \ldots, k-1$.
	\item $d(x_0,x_1) + d(x_1,x_2) + \ldots + d( x_{k-1}, x_k ) \leq C d(x,y)$.
\end{enumerate}
\end{definition}
As we are concerned primarily with connected complexes, all metric spaces we consider
will be assumed to be quasi-geodesic.

\begin{lemma}
Suppose $\left\{f_n : X \to X \right\}_{n\geq 0}$ is an asynchronous combing of a quasi-geodesic
metric space $(X,d)$.  There exists a positive constant $K$ such that for all
$x,y \in X$, there are reparameterizations $\sigma$ and $\sigma'$ such that
for all $n \geq 0$, $d( f_{\sigma(n)}(x), f_{\sigma'(n)}(y) ) \leq K d(x,y)$.
\end{lemma}
\begin{proof}
Let $x_0 = x, x_1, x_2, \ldots, x_k = y$ be given by the quasi-geodesic property.
By lemma \ref{lem:asynchCompatible} there are reparameterizations, $\sigma_i$, such that
$d( f_{\sigma_i(n)}(x_i), f_{\sigma_{i+1}(n)}(x_{i+1}) ) \leq \psi( S )$, for all n.
As $k \leq \frac{C}{\epsilon} d(x,y)$, the triangle inequality yields
$d( f_{\sigma_0(n)}(x), f_{\sigma_{k}(n)}(y) ) \leq \frac{\psi( S )C}{\epsilon} d(x,y)$, for all n.
\end{proof}

The next theorem was originally shown for synchronously combable groups in \cite{Al},  and
asynchronously combable groups through dimension 3 in \cite{Ger}. Our method of proof
actually proves more, as we will see in the following section.

\begin{theorem} 
If $(G,L)$ admits an asynchronous combing in the above sense, then it is type $HF^{\infty}$.
\end{theorem}

\begin{proof} 
Let $EG\hskip-.02in.$ denote the simplicial
homogeneous bar resolution of $G$. Let $G$ act in the usual way on
the left, by $g\cdot (g_0,g_1,\dots,g_n) :=
(gg_0,gg_1,\dots,gg_n)$. Define a $G$-invariant simplicial weight
function on $EG\hskip-.02in.$ by
\begin{equation*}
w_n(g_0,g_1,\dots,g_n) := \sum_{i=0}^{n-1} d(g_i,g_{i+1}) = \sum_{i=0}^{n-1} L(g_i^{-1}g_{i+1})
\end{equation*}
Because $L$ is proper, the orbit $\{(g_0,g_1,\dots,g_n)\, |\, w_n(g_0,g_1,\dots,g_n)\leq N\}/G$ is a finite set 
for each $n$ and $N$. This orbit may alternatively be described as $\pi^{-1}(B_N(BG_n))$, where $BG\hskip-.02in.$ 
is the non-homogeneous bar construction on $G$, $\pi:EG\hskip-.02in.\to BG\hskip-.02in.$ is given by 
$\pi(g_0,g_1,\dots,g_n) = [g_0^{-1}g_1,g_1^{-1}g_2,\dots,g_{n-1}^{-1}g_n]$, and $B_N(_-)$ denotes the $N$-ball 
$B_N(BG_n) := \{[g_1,\dots,g_n]\,|\, \sum_{i=1}^n L(g_i)\leq N\}$.

Recall that given two simplicial functions $h_0,h_1:EG \to EG$, 
there is a homotopy between them represented by the ``sum''
\begin{equation*}
H(h_0,h_1)(g_0,g_1,\dots,g_n) = \sum_{i=0}^n(-1)^i
(h_0(g_0),h_0(g_1),\dots,h_0(g_i),h_1(g_i),
h_1(g_{i+1}),\dots,h_1(g_n))
\end{equation*}

Although we have written this sum algebraically, this should be
viewed as a \underline{geometric} sum which associates to the
$n$-simplex $(g_0,g_1,\dots,g_n)$ the collection of
$(n+1)$-simplices indicated by the right-hand side, with
orientation determined by the coefficient $(-1)^i$. Geometrically,
this collection of $(n+1)$ simplices, all of dimension $(n+1)$,
fit together to form a subset whose geometric realization is
homeomorphic to $\Delta_n\times [0,1]$. In fact, this last
statement is true for more general types of maps which are not
simplicial. In particular, given i) a fixed asynchronous combing
$\{f_n\}$ of $G$, ii) a fixed $n$-simplex $(g_0,\dots,g_n)$ of
$EG\hskip-.02in.$, and iii) a collection of reparameterizations
$\sigma_0,\dots,\sigma_n$ satisfying the condition of Lemma 1 with
respect to i) and ii), we may consider the `homotopy" from
$(f_{\sigma_0(m)}(g_0),f_{\sigma_1(m)}(g_1),\dots,f_{\sigma_n(m)}(g_n))$
to
$(f_{\sigma_0(m+1)}(g_0),f_{\sigma_1(m+1)}(g_1),\dots,f_{\sigma_n(m+1)}(g_n))$
given by the expression
\begin{multline}\label{homotopy}
H^{\underline{\sigma}}(\{f_k\};m,m+1)(g_0,g_1,\dots,g_n)\\
:= \sum_{i=0}^n(-1)^i
(f_{\sigma_0(m)}(g_0),f_{\sigma_1(m)}(g_1),\dots,f_{\sigma_i(m)}(g_i),f_{\sigma_i(m+1)}(g_i),
f_{\sigma_i(m+1)}(g_{i+1}),\dots,f_{\sigma_i(m+1)}(g_n))
\end{multline}

This is not part of a global homotopy, but still yields a collection of oriented
$(n+1)$-simplices whose realization is homeomorphic to $\Delta_n\times[0,1]$.
Moreover, these homotopies may be strung together, as the ``end" of
$H^{\underline{\sigma}}(\{f_k\};m,m+1)(g_0,g_1,\dots,g_n)$ and the ``beginning" of
$H^{\underline{\sigma}}(\{f_k\};m+1,m+2)(g_0,g_1,\dots,g_n)$ match up.

Given a function $f:\R_+\to \R^+$, write
$w_n^f$ for the weight function
\begin{equation*}
w_n^f(g_0,g_1,\dots,g_n) := \sum_{i=0}^{n-1}f(d(g_i,g_{i+1}))
\end{equation*}
By Lemma 1 and property (C3),
\begin{align}\label{weight}
&w_{n+1}\left(H^{\underline{\sigma}}(\{f_k\};m,m+1)(g_0,g_1,\dots,g_n)\right)\\
&< (n+1)\bigg(w_n(f_{\sigma_0(m)}(g_0),f_{\sigma_1(m)}(g_1),\dots,f_{\sigma_n(m)}(g_n))\nonumber\\
&\qquad\qquad\qquad+ w_n(f_{\sigma_0(m+1)}(g_0),f_{\sigma_1(m+1)}(g_1),\dots,f_{\sigma_n(m+1)}(g_n)) + \lambda\bigg)\nonumber\\
&\le(2n+2)\left(w_n^{\psi}(g_0,g_1,\dots,g_n) + \lambda\right)\nonumber\\
&\le(2n+2)\psi\left(w_n(g_0,g_1,\dots,g_n)\right) + (2n+2)\lambda\nonumber
\end{align}

Equation (\ref{weight}) implies that every simplex in $\pi^{-1}(B_N(BG_n))$
can be coned off in $\pi^{-1}(B_{N'}(BG_n))$ where $N' =
(2n+2)(\psi(N) + \lambda))$. Of course, degeneracies preserve the
inequality. In other words, if $(g_0,g_1,\dots,g_n) =
s_I(g'_0,\dots,g'_k)$ for some iterated degeneracy map $s_I$ and
$k$-simplex $(g'_0,\dots,g'_k)$, then the inequality in (\ref{weight}) may be
improved to

\begin{equation*}
w_{n+1}\left(H^{\underline{\sigma}}(\{f_k\};m,m+1)(s_I(g'_0,\dots,g'_k)\right) <
(2k+2)\psi(w_k(g'_0,\dots,g'_k)) + (2k+2)\lambda
\end{equation*}

Let $X(n) := EG\hskip-.02in.^{(n)}$, the simplicial $n$-skeleton
of $EG\hskip-.02in.$ For each integer $N$, let $X(n)_N := X(n)\cap
\pi^{-1}(B_N(BG\hskip-.02in.))$. Then $X(n)$ is an $n$-good
complex for $G$ in the sense of \cite{Br1}, and obviously $X(n) =
\lim_N X(n)_N$. Moreover, equations (\ref{homotopy}) and (\ref{weight}) together imply
\begin{equation}\label{upper}
X(n)_N\hookrightarrow X(n)_{N'}\,\,\text{ is
null-homotopic},\quad N' = (2n+2)(\psi(N) + \lambda)
\end{equation}
By Theorem 2.2 of \cite{Br1}, we conclude that $G$ is of type $FP^n$. Then, as $G$ is
of type $FP^n$ for each $n$, it must be of type $FP^{\infty}$
\cite{Br2}.
\end{proof}

In fact, the explicit estimates in (\ref{weight}) and (\ref{upper}) allow one to conclude a bit more. We will need some terminology.

\begin{definition}
A discrete group with word-length $(G,L)$ is
\underline{$\B$-combable} (i.e., $\B$-asynchronously combable)
if the functions $\psi$ and $\phi$ in (C2)and (C4) are bounded
above by functions in the bounding class $\B$.
\end{definition}

As indicated above, given a bornological ${\mathcal{H}}_{{\B},L}(G)$-module $V$,
one has the subcochain complex ${\B} C^*(G;V)\subset C^*(G;V) = \Hom_G(C_*(EG\hskip-.02in.),V)$
consisting of those cochains which are bounded in the bornology induced by $\B$. The group $G$,
or pair $(G,L)$ is called \underline{$\B$-isocohomological} with respect to $V$ (abbr. $V$-$\B$IC)) 
if the inclusion ${\B} C^*(G;V)\subset C^*(G;V)$ induces an isomorphism of cohomology groups in all degrees.
\begin{equation*}
{\B} H^*(G;V) := H^*({\B} C^*(G;V)) \stackrel{\isom}{\to} H^*(G;V)
\end{equation*}

$\B$-isocohomologicality with respect the trivial module $\C$ is referred to simply as
$\B$-isocohomological ($\B$-IC). The pair
$(G,L)$ is \underline{strongly $\B$-isocohomological} (abbr. $\B$-SIC) if it is
$V$-$\B$IC for all bornological ${\mathcal{H}}_{{\B},L}(G)$-modules
$V$.

\begin{corollary}
Let $G$ be a finitely-presented group equipped with word-length function $L$, and $\B$ a multiplicative bounding 
class. If $(G,L)$ is asynchronously $\B$-combable, then $G$ is $\B$-IC.
\end{corollary}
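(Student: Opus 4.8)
The plan is to exhibit $G$ as a group of type $\B FP^\infty$ and then to run the standard finite-type comparison argument. First, I would dispose of a degenerate case: if $\B\not\succeq\mathcal{L}$ then the super-additive function $\psi$ of axiom (C2), being $\B$-bounded, is bounded, so $(G,d_L)$ has finite diameter and $G$ is finite ($L$ being proper); then $\B C^*(G;\C)=C^*(G;\C)$ and there is nothing to prove. So assume $\B\succeq\mathcal{L}$.

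Next observe that $\B C_*(EG)$ is always a continuous resolution of $\C$ over $\BG$: the standard contraction $s_n(g_0,\dots,g_n)=(e,g_0,\dots,g_n)$ raises the weight $w_n$ by exactly $L(1)=1$, so it extends to a bounded $\C$-linear chain contraction of the completion. Hence $\B H^*(G;\C)=\Ext^*_{\BG,a}(\C,\C)$, and the idea is to compute this Ext-group using a resolution of finite type. By the Theorem above, $G$ is $HF^\infty$, in particular $FP^\infty$, so fix a nice resolution $C_*$ of $\Z$ over $\Z[G]$ and set $D_*=C_*\otimes\C$. Provided $\B D_*$ is again a continuous resolution of $\C$ over $\BG$, then -- $D_*$ being of finite type -- a bounded $\BG$-homomorphism $\B D_n=\BG^{k_n}\to\C$ is nothing but a $\C[G]$-homomorphism $D_n=\C[G]^{k_n}\to\C$ (the bornological condition being vacuous out of a finitely generated free module into the normed trivial module $\C$), so $\bHom_{\BG}(\B D_*,\C)=\Hom_{\C[G]}(D_*,\C)$ as cochain complexes and
\[
\B H^*(G;\C)=H^*\!\big(\bHom_{\BG}(\B D_*,\C)\big)\isom H^*\!\big(\Hom_{\C[G]}(D_*,\C)\big)=H^*(G;\C),
\]
the isomorphism being $\Phi_\B^*$. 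By Lemma~\ref{lem:niceExt} (with $\B'=\B$, using $\B\succeq\mathcal{L}$), the hypothesis ``$\B D_*$ is a continuous resolution'' holds once the weighted Dehn functions $\{d_C^{w,n}\}$ of $G$ are $\B$-bounded, so this is all that remains; it is the only place where asynchronous $\B$-combability enters.

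To bound the weighted Dehn functions I would revisit the explicit coning in the proof of the Theorem, now with $EG$ carrying the weighting $w_n(g_0,\dots,g_n)=\sum_{i=0}^n L(g_i)$ (basepoint the identity), which on a finite-type model is linearly equivalent to the $1$-skeleton weighting. Stringing together the homotopies $H^{\underline\sigma}(\{f_k\};m,m{+}1)$ contracts an $n$-simplex $\sigma=(g_0,\dots,g_n)$ to the basepoint; with the reparameterizations chosen compatibly (cf.\ Lemma~\ref{lem:asynchCompatible}) and without unnecessary stalling, this uses at most $(n{+}1)\,\phi\!\left(w_n(\sigma)\right)$ simplices, each of weight at most $(n{+}2)\,\psi\!\left(w_n(\sigma)\right)$ by axiom (C2) applied to the pairs $(g_i,\ast)$ and by (C3) (compare the estimates (\ref{weight}) and (\ref{upper})). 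Thus for a cycle $z=\sum_\sigma\lambda_\sigma\sigma$ with $\|z\|_{w,n}\le k$, so that $w_n(\sigma)\le k$ and $\sum_\sigma|\lambda_\sigma|\le k$ (the weight avoiding $(0,1)$), the coned chain $\beta=s_n(z)$ fills $z$ with
\[
\|\beta\|_{w,n+1}\;\le\;\sum_\sigma|\lambda_\sigma|\cdot(n{+}1)\phi(k)\cdot(n{+}2)\psi(k)\;\le\;(n{+}1)(n{+}2)\,k\,\phi(k)\,\psi(k),
\]
which for each fixed $n$ is a $\B$-bounded function of $k$, since $\B$ is multiplicative, $\phi,\psi\in\B$, and $\B\succeq\mathcal{L}$. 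This shows the ``Dehn functions of $EG$'' are $\B$-bounded; to conclude the same for the Dehn functions of $G$ one transfers the estimate along the $G$-homotopy equivalence $C_*\homotopic C_*(EG)$ supplied by the Theorem, whose structure maps, being built from the same coning, are $\B$-bounded.

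I expect this last transfer to be the main obstacle. The coning of the Theorem lives on the infinite-type complex $EG$, whereas weighted Dehn functions -- and the hypotheses of Lemma~\ref{lem:niceExt} -- are phrased in terms of a nice, hence finite-type, resolution, and (as noted at the close of the proof of Lemma~\ref{lem:niceExt}) one cannot simply apply that lemma to $\B C_*(EG)$ directly. So one must first extract a finite-type $G$-$HF^\infty$ model from Brown's criterion applied to the skeleta $X(n)_N$ of $EG$, then check that its chain-level comparison with $\B C_*(EG)$ -- together with the connecting homotopies -- is $\B$-bounded in both directions, and finally reconcile the vertex-length weighting above with the $1$-skeleton weighting up to linear (hence $\B$-) equivalence, using (\ref{eqn:compareDehn}) and the finiteness of $T_n$. (When $\B$ is moreover composable, Lemma~\ref{lem:niceExt} applies with $\B'=\B$ without friction; for merely multiplicative $\B$ one relies on the fact that the bound obtained above for the Dehn functions involves only a product, not a composition, of $\psi$, $\phi$ and a linear factor.)
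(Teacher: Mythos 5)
Your overall strategy is sound but it is not the paper's, and it leaves a genuine gap exactly where you say you expect one. The paper's proof never leaves the bar resolution: it observes that the estimates (\ref{weight}) and (\ref{upper}) show that coning off a simplex of weight $m$ uses a number of simplices, each of a weight, bounded by elements of $\B$ evaluated at $m$, and then (using multiplicativity of $\B$) invokes Meyer's tempered-complex criterion directly for ${\B}C_*(EG)$. Your route instead descends to a finite-type nice resolution $D_*$ so as to apply Lemma \ref{lem:niceExt} and the finite-type identification $\bHom_{\BG}(\B D_n,\C)=\Hom_{\C[G]}(D_n,\C)$; that part of the argument is fine (it is essentially the implication $(\B 3)\Rightarrow(\B 1)$ of Theorem \ref{thm:geocrit} with trivial coefficients), and your coning estimate on $EG$ is in the spirit of (\ref{weight}) and (\ref{upper}).

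The gap is the transfer you flag in your last paragraph. The combing cones off simplices of $EG$, which has infinitely many $G$-orbits in every dimension, whereas the weighted Dehn functions entering Lemma \ref{lem:niceExt} are defined on a finite-type resolution. A $G$-equivariant chain map out of a finite-type module is automatically $\B$-bounded (Lemma \ref{bounded}), but the comparison map from $C_*(EG)$ to the finite-type model and the associated chain homotopies have infinite-type source, so their $\B$-boundedness is not automatic and must be extracted by hand from the combing; this is precisely the difficulty the paper's Remark on geometric versus algebraic Dehn functions points to, and it is why the paper's proof stays on $EG$ and cites Meyer rather than descending to a nice resolution. As written, your argument establishes $\B$-controlled fillings in $EG$ but does not verify the hypothesis of Lemma \ref{lem:niceExt}, namely $\B$-bounded weighted Dehn functions of a nice resolution. (Two minor points: the degenerate case is cleaner if you note that (C2) at $n=0$ forces $\psi(t)\ge t$ on achieved distances, so an infinite quasi-geodesic $G$ with $\psi\in\B$ already yields $\B\succeq\mathcal{L}$; and the bound on the number of coning steps for $(g_0,\dots,g_n)$ in terms of $\phi$ and $w_n$ is only valid after normalizing the orbit representative, e.g.\ $g_0=e$, since $w_n$ is $G$-invariant while $L(g_i)$ is not.)
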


\begin{proof} 
By the previous theorem, the hypothesis that $G$ is $\B$-asynchronously combable implies by equations (\ref{weight}) 
and (\ref{upper}) that in using the combing to cone off a simplex of weight $m$, both the number of simplices appearing 
in the cone, as well as the weight of each, is bounded above by $f_i(m)$ where $f_i\in\B$. By the multiplicativity of 
the bounding class $\B$, the bornological chain complex  ${\B}C_*(EG)$ is a tempered complex in the sense of Meyer \cite{M1} 
which satisfies the necessary conditions established by Meyer to conclude the result (Meyer's original result was stated only 
for the polynomial, subexponential and simple exponential bounding classes, but the same argument works for arbitrary multiplicative bounding classes).
\end{proof}


\subsection{$\B$-isocohomologicality and type ${\B}FP^{\infty}$ groups}\label{sect:BIsocoh}

It is natural to ask about the relation between the purely homological notion of strong $\B$-isocohomologicality and the more geometric/topological  ${\B}FP^{\infty}$ condition. The following result answers that question; it is a generalization to arbitrary bounding classes of \cite[Thm. 2.6]{JR1}.

\begin{theorem}\label{thm:geocritRest}
Let $G$ be a finitely presented group of type $FP^r$, for some $r \leq \infty$.  For $k < r$, the following are equivalent.  
\begin{itemize}
	\item[(${\B} 1$)] ${\B}H^{*}(G;V)\to H^{*}(G;V)$ is an isomorphism for all bornological 
		${\mathcal{H}}_{{\B},L}(G)$-modules $V$, in all degrees $* \leq k+1$.
	\item[(${\B} 2$)] ${\B}H^*(G;V)\to H^*(G;V)$ is surjective for all bornological 
		${\mathcal{H}}_{{\B},L}(G)$-modules $V$, in all degrees $* \leq k+1$.
	\item[$({\B} 3$)] $G$ is ${\B}FP^k$.
\end{itemize}
\end{theorem}

Before proceeding to the proof of this theorem, we introduce necessary notation and isolate two
lemmas that highlight the importance of the finiteness assumption on $G$.

For a simplicial complex $X$ equipped with the $1$-skeleton weight function $w_X$, and a free $G$ action, consider 
$B^\B_m(X) = \partial_{m+1}(\B C_{m+1}(X)) \subset \B C_m(X)$. 
On $B^\B_m(X)$ one has \underline{filling} seminorms
$\|\,\,\,\|_{f,\lambda}, {\lambda\in{\B}}$, defined as follows:
\[
\| b\|_{f,\lambda} = \inf \left\{ \| a \|_{\lambda} \, | \, \exists a\in C_{m+1}(X)\ \  s.t.\ b = \partial(a) \right\}.
\]
This norm identifies $B^\B_m(X)$ with $\B C_{m+1}(X) / \ker_{m+1}$. 

Given a bornological $\BG$-module $V$ with bornology defined by a
collection of seminorms\footnote{We remark that this is an assumption on $V$.  Not every bornological space is of this form.}
 $\{\eta_i\}_{i\in I}$, an $m$-cochain $c\in C^m(X;V)$ is $\B$-bounded 
(i.e., lies in the subspace ${\B} C^m(X;V)$) if
\begin{equation*}
\forall\eta_i, i\in I\ \ \exists\lambda'\in{\B}\ s.t.\ \forall\sigma\in X_m,
\eta_i(c(\sigma))\leq \lambda'(w_X(\sigma)) = \|\sigma\|_{\lambda'}.
\end{equation*}
Then, as was shown in \cite{JOR1}, ${\B}H^*_G(X;V) := 
\Ext^*_{{\mathcal{H}}_{{\B},L}(G)}(\C, V) =$ the cohomology of the cochain
complex $\{{\B} C^m(X;V))\}_{m\geq 0}$. In particular, $B^\B_m(X)$, equipped with the collection of filling seminorms defined above, is a bornological module over ${\mathcal{H}}_{{\B},L}(G)$, and so an $m$-chain $c\in {\B} C^m(X;B^\B_m(X))$ satisfies
\begin{equation*}
\forall\lambda\in{\B}\ \ \exists\lambda'\in{\B}\ s.t.\ \forall\sigma\in X_m,
\|c(\sigma)\|_{f,\lambda}\leq \lambda'(w_X(\sigma)) = \|\sigma\|_{\lambda'}.
\end{equation*}

For a weighted set $(U,w_U)$, we can (as in [JOR1]) form the weighted vector space $X(U) := \mathbb C[U]$, with weighting given on basis elements by $w_U$. For any bounding class $\B$ and $\lambda\in\B$, we have an associated seminorm
$\|\sum_i\alpha_i x_i\|_{\lambda} := \sum_i|\alpha_i|\lambda(w(x_i))$.
If $(V,w_V)$ is a second weighted set, a linear map $\phi:X(U)\to X(V)$ is $\B$-bounded if for all $\lambda\in\B,\exists\lambda'\in\B$ such that $\|\phi(x)\|_{\lambda}\le \|x\|_{\lambda'} \forall x\in X(U)$.
\vskip.1in

Let $S$ and $T$ be free $G$-sets equipped with weight functions $w_S$ and $w_T$, such that $S/G$ is finite. 
Let $\{s_1,s_2,\dots,s_N\}\subset S$ be a complete set of orbit representatives satisfying $w_S(s_j)\leq w_S(gs_j)$ 
for all $1\leq j\leq N$, $g\in G$. We assume the existence of constants $D_1, D_2, D_3$ satisfying
\begin{enumerate}
	\item $w(gz)\leq D_1 L(g) + w(z)\,\,\quad\forall z\in Z = S,T,\,\,w = w_S,w_T$
	\item $L(g)\leq D_2w_S(gs_j) + D_3\,\quad\forall 1\leq j\leq N, g\in G$
\end{enumerate}

\begin{lemma}\label{bounded} 
Let $S,T$ be as above, and $V$, respectively $W$, denote the weighted vector space over $\C$ with basis $S$, respectively $T$.  
Then any $G$-equivariant linear map $V \to W$ is $\B$-bounded.
\end{lemma}

Let $W'\subset {\B}W$ be a subspace closed in the Frechet bornology ($=$ topology), and set $W''={\B}W/W'$. 

Suppose we are also given a $G$-equivariant linear map $\mathfrak{h}:V\to W''$. Then $\mathfrak{h}$ can be lifted to 
a $G$-equivariant linear map $\mathfrak{f}:V\to {\B}W$. 
By the previous lemma, this lifting is $\B$-bounded. Hence

\begin{lemma}\label{quotient} 
Any $G$-equivariant linear map $V\to W''$ is $\B$-bounded.
\end{lemma}

\begin{proof}[Proof of Theorem \ref{thm:geocritRest}]
(${\B} 1$)  obviously implies (${\B} 2$). The implication $({\B} 3)\Rightarrow ({\B} 1)$
follows by a natural extension of the arguments of \cite{JR1} and \cite{O1}.
Namely,  the $\B FP^k$ condition yields a complex $\B D_*$, which is the completion of
a $k$-nice resolution of $\C$ over $\C[G]$, admitting a bounded linear chain contraction
through dimension $k$.   For degrees less than or equal to $k$, this complex is a
free ${\mathcal{H}}_{{\B},L}(G)$-module on finitely many generators. This implies the result.

The main point is to show (${\B} 2$) implies (${\B} 3$), specifically that there exists a resolution $C_*$ of $\C$ over $\C[G]$
which is finite dimensional in degrees less than $k$,  and whose completion ${\B}C_*$ with respect to the semi-norms induced
by $\B$ yields a bounded resolution of $\C$ over $\mathcal{H}_{\B,L}(G)$ through the appropriate range.   This verification will be carried out in two
parts.  We first use the epicohomological condition (${\B} 2$) to show that for every $\lambda \in \B$ 
there is a $\lambda' \in \B$ such that for all $b \in B_m(X)$, $\| b \|_{f,\lambda} \leq \| b \|_{\lambda'}$. 
We then show inductively that this inequality guarantees the existence of a $\B$-bounded section 
$\tilde{s}_{m+1}:C_m\to C_{m+1}$.  The $\B$-boundedness of the section implies that it extends to 
the $\B$-completions as a bounded linear splitting on ${\B}C_*$.

That $G$ is finitely presented and $FP^r$ implies the existence of a contractible free $G$-simplicial complex $X$ such that in dimensions $n \leq r$ the $G$ action on $X^{(n)}$ is cofinite.  Fix a basepoint $x_0 \in X^{(0)}$, and equip $X^{(r)}$ 
with the $1$-skeleton weighting.  Also fix a family of representatives of the orbits of $G$ in $X^{(0)}$, $\cR = \{ x_0, x_1, \dots, x_l\}$
with each $x_i$ satisfying $d_{X^{(1)}}( x_i, x_0 ) \leq d_{X^{(1)}}( g x_i, x_0 )$ for all $g \in G$.  Let $\I G = \sqcup_{i=0}^{l} G_i$,
where $G_i$ is a copy of $G$.  We denote an element of $\I G$ by $(g,i)$ for $g \in G$ and $0 \leq i \leq l$.
Let $Y$ be the geometric realization of the homogeneous bar resolution on $\I G$.  Thus, $C_*(X)$ and $C_*(Y)$ are
resolutions of $\C$ over $\C[G]$.
We endow $Y$ with a weight function arising from the length function $L$ on the group $G$, given on simplices by
\[
w_Y\left([ (g_0, i_0),(g_1, i_1),\dots,(g_n, i_n)]\right) := \sum_{i=0}^n L(g_i).
\]
The weight function $w_Y$ satisfies the inequality
\begin{equation*}
	w_Y(g\sigma) \leq (n+1)L(g) +w_Y(\sigma)\qquad \forall \sigma\in Y^{(n)}
\end{equation*}
while for $w_X$ there is a constant $C$ related to the quasi-isometric equivalence between $G$ and $X^{(r)}$ with
\begin{equation*}
	w(g\sigma) \leq C (n+1) L(g) +w(\sigma)\qquad \forall \sigma\in X^{(n)}.
\end{equation*}

As in \cite[Th. 2.6]{JR1}, there exist $\C[G]$-module morphisms of chain complexes $\phi_*:C_*(Y)\to C_*(X)$ and 
$\psi_*:C_*(X)\to C_*(Y)$ with both $\phi_*\circ \psi_*$ and $\psi_*\circ\phi_*$ $G$-equivariantly chain-homotopic 
to the identity.  Let $V_{k-1} = B^\B_{k-1}(X)$ equipped with the filling seminorms, $\| \, \|_{f, \lambda}$, $\lambda \in \B$,
and let $u \in C^k_G( X; V_{k-1} )$ be the $k$-cocycle given as the composition
$C_k(X) \overset{\partial_k}{\to} B_{k-1}(X) \overset{\iota_{k-1}}{\hookrightarrow} V_{k-1}$.

The properties of $\phi$ and $\psi$ ensure there is a $G$-equivariant $(k-1)$-cocycle $v\in C^{k-1}_G(X;V_{k-1})$ with
$u = (\psi^k\circ \phi^k)(u) + \delta(v)$. Furthermore, condition (${\B} 2$) guarantees there is a $G$-equivariant $\B$-bounded 
cocycle $u'\in {\B} C^k_G(Y;V_{k-1})$ and a $G$-equivariant $(k-1)$-cochain $v'\in C^{k-1}_G(Y;V_{k-1})$
satisfying the equation $\phi^k(u) = u' + \delta(v')$. By the argument of \cite{Mi}, one has for any $b \in B_{k-1}(X)$
\begin{equation}\label{eq:u'}
	\iota_{k-1}{b} = u'([e,\psi_{k-1}(b)]) + (\psi^{k-1}(v') + v)(b)
\end{equation}
where $[e,\sum \gamma_{[g_0,\dots,g_n]}[g_0,\dots,g_n]] := \sum \gamma_{[g_0,\dots,g_n]}[e,g_0,\dots,g_n]$. 

By (\ref{eq:u'}), one has
\[
\left\|b\right\|_{f,\lambda} \leq \left\|u'([e,\psi_{k-1}(b)])\right\|_{f,\lambda} + \left\|(\psi^{k-1}(v') + v)(b)\right\|_{f,\lambda}.
\]
As $u'$ is $\B$-bounded, there exists an element $\lambda'\in{\B}$ satisfying
\[
\left\|u'([e,\psi_{k-1}(b)]\right\|_{f,\lambda}\leq \left\|[e,\psi_{k-1}(b)]\right\|_{\lambda'} = \left\|\psi_{k-1}(b)\right\|_{\lambda'}.
\]
Taking $(S,w_S) = (X^{(k-1)},w_X)$ and $(T,w_T) = (Y^{(k-1)},w_Y)$ and applying Lemma \ref{bounded} shows that
$\psi_{k-1}$ is $\B$-bounded. By Lemma \ref{quotient}, the map $(\psi^{(k-1)}(v') + v):C_{k-1}(X)\to V_{k-1}$ is also
bounded. Hence the sum must be bounded.
Thus for all $\lambda \in \B$ there is $\lambda'' \in \B$ such that for all $b \in B^\B_{k-1}(X)$
\[ \| b \|_{f,\lambda} \leq \| b \|_{\lambda''}. \]

As $\B$ is weakly countable, $\B C_k(X)$ is a Frechet space.
We define a multi-valued map $F: B^\B_{k-1}(X) \to \B C_k(X)$ by associating to an element $b + \ker \partial_{k} \in B^\B_{k-1}(X)$,
the affine subspace $\left\{ a \, | \, \partial_k(a) = b \right\}$.  This is a translate of the subspace $\ker \partial_k$
in $\B C_k(X)$.  We follow Noskov in \cite{No2} by using a version of Michael's Theorem,  generalized to Frechet spaces.
By \cite[Thm. 3.4]{Cho-Kim}, $F$ has a (not necessarily linear) single-valued continuous selection.  
Thus, for every $b \in B^\B_{k-1}(X)$ there is an $a \in \B C_k(X)$ with $\partial_k(a) = b$
such that for every $\lambda \in \B$, there is a $\lambda' \in \B$ satisfying
\[ \| a \|_{\lambda} \leq \| b \|_{f, \lambda'}. \]

The last two inequalities imply that we have a continuous map $\partial_k \B C_{k}(X) \to \B C_k(X)$, with
$\partial_k \B C_k(X) \subset \B C_{k-1}(X)$ topologized by the countable family of norms, $\left( \| \cdot \|_\lambda \right)_{\lambda \in \B}$.
We use these filling inequalities below to construct a bounded contracting homotopy for $\B C_*(X)$, by modifying the
methods of \cite{Mi}.

We set $C_{-1}(X) = \C$, making $C_*(X)$ an augmented resolution of $C_{-1}(X)$ over $\mathbb C[G]$. Let $\partial$ be 
the boundary map $\partial_i : C_i(X) \to C_{i-1}(X)$.  (We also call the boundary map from
$\B C_i(X)$ to $\B C_{i-1}(X)$ by $\partial_i$.)  Let $\B C_*(X)$ be the $\B$-completion of $C_*(X)$.
We will show that $\B C_*(X)$ admits a bounded linear contraction $\{S_n: \B C_n(X)\to \B C_{n+1}(X)\}$ through dimension $k$.

For a $v \in X^{(0)}$, let $i(v)$ be such that $v \in G \cdot x_{i(v)}$.   The augmented complex $\B C_*( \I G)$ 
is a free bornological $\B$-resolution of $\C = \B C_{-1}(\I G)$, and comes equipped with a bounded $\C$-linear contraction. 
Label it $s_i : \B C_i(\I G) \to \B C_{i+1}(\I G)$, with the boundary map $d_i : \B C_i(\I G) \to \B C_{i-1}(\I G)$.

Define $G$-equivariant $\B$-bounded chain maps $\Psi : \B C_*(X) \to \B C_*(\I G)$ and $\Phi : \B C_*(\I G) \to \B C_*(X)$ as follows.
In dimensions $n\leq -1$, set $\Psi_n : \B C_n(X) \to \B C_n(\I G)$ equal to the identity.  (In degree $-1$, this is just the map
from $\C \to \C$, and for $n < -1$, $\B C_n(X)$ and $\B C_n(\I G)$ are both zero.)  
Let $\Psi_0 : \B C_0(X) \to \B C_0( \I G )$ be the map given on simplices by $[v] \mapsto [ (g, i(v) ) ]$.  
This is a well-defined, $G$-equivariant linear map.  More generally, for $1 \leq n \leq k$
let $\Psi_n : \B C_n(X) \to \B C_n( \I G)$ be defined by $[ v_0, v_1, \dots, v_n ] \mapsto [ (g_0, i(v_0)), (g_1, i(v_1)), \dots, (g_n, i(v_n)) ]$.
We note that these maps are $\B$-bounded by Lemma \ref{bounded}.  

In order to construct the continuous $G$-equivariant chain map $\Phi : \B C_*(\I G) \to \B C_*(X)$, we first 
observe that for each point $v \in X^{(0)}$, there is a unique $g_v \in G$ and $x_i \in \cR$ such that $g_v x_i = v$, with the assignment $v \mapsto g_v$ being $G$-equivariant.  As $G$ is quasi-isometric to $X^{(1)}$, there 
exist positive constants $A$ and $B$ such that for all $g \in G$, and $x_j \in \cR$,
$d_X( g x_j, x_0 ) \leq A L( g ) + B$.

Define the $G$-equivariant linear map $\Phi_0 : C_0(\I G) \to \B C_0(X)$ by
\[ \Phi_0([ (g, i) ]) = [ g x_i ]. \]
For $\lambda$ in the bounding class $\B$, there is an $\lambda' \in \B$ such that
$\lambda( A x + B ) \leq \lambda'(x)$.  Then $\| \Phi_0([(g,i)]) \|_{\lambda} \leq \| [(g,i)] \|_{\lambda'}$.
Therefore $\Phi_0$ extends to a bounded $\B G$-module map $\Phi_0: \B C_0(\I G) \to \B C_0(X)$.

As $\Phi_0( d_1[ (g_0, i_0), (g_1, i_1)] ) = [g_1 x_{i_1}] - [g_0 x_{i_0}]$, this is a 0-boundary in $B_0(X)$.  
By the filling inequalities above, there is a 1-chain $c = c((g_0, i_0), (g_1, i_1)) \in \B C_1(X)$
such that for each $\lambda \in \B$ there is $\lambda' \in \B$ with  
$\| c((g_0, i_0), (g_1, i_1)) \|_{\lambda} \leq \| [ (g_0, i_0), (g_1, i_1)] \|_{\lambda'}$.
There could be many possible choices for $c((g_0, i_0), (g_1, i_1))$.  We make the following assumptions on our choice:
\begin{enumerate}
	\item[(1)] If $\{g_0 x_{i_0}, g_1 x_{i_1}\}$ spans a 1-simplex $[ g_0 x_{i_0}, g_1 x_{i_1}]$ in $X^{(2)}$, 
		then $c( (g_0, i_0), (g_1, i_1)) = [g_0 x_{i_0}, g_1 x_{i_1}]$.
	\item[(2)] $c$ is $G$-equivariant (as the action of $G$ on $X^{(2)}$ is free, this can always be arranged).
\end{enumerate}
Set $\Phi_1( [(g_0, i_0), (g_1, i_1)] ) := c((g_0, i_0), (g_1, i_1))$.  This yields a $G$-equivariant $\B$-bounded linear map
$\Phi_1 : \B C_1( \I G ) \to \B C_1( X )$, satisfying $\partial_1 \Phi_1 = \Phi_0 d_1$.  Note that we may
assume (1) only because $X$ is equipped with the 1-skeleton weighting; thus the weight of a simplex is bounded by
the sum of the weights of the codimension 1 faces.

Inductively, suppose that we have constructed $\Phi_n$ for some $n\leq k-1$, the map having been defined through the use of fillings
$c((g_0, i_0), (g_1, i_1), \dots, (g_n, i_n) )$ of n-boundaries $\Phi_{n-1}( d_{n}( [(g_0, i_0), (g_1, i_1), \dots, (g_n, i_n)  ] ) )$ in $X^{(n-1)}$ satisfying:
\begin{enumerate}
	\item[($P_{1,n}$)] If $g_0 x_{i_0}, g_1 x_{i_1} , \dots,  g_n x_{i_n}$ span an n-simplex $[ g_0 x_{i_0}, g_1 x_{i_1} , \dots, g_n x_{i_n}]$ in $X^{(n)}$, 
		then $c( (g_0, i_0), (g_1, i_1), \dots, (g_n, i_n)  ) = [g_0 x_{i_0}, g_1 x_{i_1} , \dots, g_n x_{i_n}]$.
	\item[($P_{2,n}$)] The choice of $c$ is $G$-equivariant.
	\item[($P_{3,n}$)] For every $\lambda \in \B$ there is $\lambda' \in \B$ such that 
	\[
\| c( (g_0, i_0), (g_1, i_1), \dots, (g_n, i_n)  ) \|_{\lambda} \leq \| [ (g_0, i_0), (g_1, i_1), \dots, (g_n, i_n)  ] \|_{\lambda'}
\]
 for all $[ (g_0, i_0), (g_1, i_1), \dots, (g_n, i_n)  ] \in C_n(\I G)$.
\end{enumerate}

Consider the $(n+1)$-chain $[(g_0, i_0), (g_1, i_1), \dots, (g_{n+1}, i_{n+1}) ]\in C_{n+1}(\I G)$.
\[ d_{n+1} ( [(g_0, i_0), (g_1, i_1), \dots, (g_{n+1}, i_{n+1})] ) = 
	\sum_{j =0}^{n+1} (-1)^j [ (g_0, i_0), \dots, \widehat{ (g_j, i_j) }, \dots, (g_{n+1}, i_{n+1}) ]. \]
Then \[ \Phi_{n} \left( d_{n+1} ( [(g_0, i_0), \dots, (g_{n+1}, i_{n+1})] )  \right) = 
	\sum_{j =0}^{n+1} (-1)^j  c( (g_0, i_0) , \dots, \widehat{ (g_j, i_j) }, \dots, (g_{n+1}, i_{n+1}) ) \]
lies in $B_{n}(X)$.  For $\lambda \in \B$, take $\lambda' \in \B$ such that for any 
$[(g_0, i_0), \dots, \widehat{(g_j, i_j)}, \dots, (g_{n+1}, i_{n+1}) ] \in C_{n}(\I G)$,
\[ \| c( (g_0, i_0), \dots, \widehat{(g_j, i_j)}, \dots, (g_{n+1}, i_{n+1})  ) \|_{\lambda} 
	\leq \| [ (g_0, i_0), \dots, \widehat{(g_j, i_j)}, \dots, (g_{n+1}, i_{n+1})  ] \|_{\lambda'}.\]
Thus 
\[\| \Phi_{n} \left( d_{n+1}( [(g_0, i_0), \dots, (g_{n+1}, i_{n+1}) ]) \right) \|_{\lambda} 
	\leq (n+1) \| [ (g_0, i_0), \dots, (g_{n+1}, i_{n+1}) ] \|_{\lambda'}.\]

By the filling norm inequality, for every 
$[(g_0, i_0), \dots, (g_{n+1}, i_{n+1})] \in C_{n+1}(\I G)$ there is an $(n+1)$-chain $c=c((g_0, i_0), \dots, (g_{n+1}, i_{n+1}))$ with
$\partial_{n+1} c((g_0, i_0), \dots, (g_{n+1}, i_{n+1})) = \Phi_n\left( d_{n+1}( [(g_0, i_0), \dots, (g_{n+1}, i_{n+1})])\right)$,
such that for every $\lambda \in \B$ there is an $\lambda' \in \B$ satisfying
$\| c((g_0, i_0), \dots, (g_{n+1}, i_{n+1})) \|_{\lambda} \leq \| [(g_0, i_0), \dots, (g_{n+1}, i_{n+1}) \|_{\lambda'}$.

We may then pick the fillings $c((g_0, i_0), \dots, (g_{n+1}, i_{n+1}))$ to satisfy the properties ($P_{j,n+1}$), analogous to those listed above, and set $\Phi_{n+1}( (g_0, i_0), \dots, (g_{n+1}, i_{n+1}) ) := c((g_0, i_0), \dots, (g_{n+1}, i_{n+1}))$.
This defines a $G$-equivariant, linear, $\B$-bounded chain map $\Phi_n : \B C_n(\I G) \to \B C_n(X)$, for $0 \leq n \leq k$.

We claim $\Phi_n \Psi_n = Id, 0 \leq n \le  k$. 
First, $\Phi_0 \Psi_0( [v] ) = \Phi_0( [ (g_v, i(v)) ] ) = [ g_v x_{i(v)} ] = [ v ]$.

In general, for $1 \leq n \leq k$, 
\begin{align*}
	\Phi_n \Psi_n( [v_0, \dots, v_n] ) &= \Phi_n( [ (g_0, i(v_0)), \dots, (g_i, i(v_n) )] ) \\
		&= c( (g_0, i(v_0)), \dots, (g_i, i(v_n) ) ) = [v_0,\dots,v_n]
\end{align*}	
as $\{g_0 x_{i(v_0)}=v_0, \dots, g_n x_{i(v_n)} = v_n\}$ are the vertices of the $n$-simplex $[v_0, \dots, v_n]$.

For $0 \leq n < k$, consider the map $S_n:  \B C_n(X) \to \B C_{n+1}(X)$ given by the composition $S_n = \Phi_{n+1} s_n \Psi_n$. A routine calculation shows $\partial_{n_1} S_n + S_{n-1} \partial_n = \Phi_n \Psi_n$.
Thus $\{S_n\}$ is a chain contraction. Moreover, as $\Phi$, $\Psi$, and $s$ are all $\B$-bounded, so is $S$,  yielding the necessary bounded contraction in dimensions $n \leq k$, and completing the verification of ($B$3).
\end{proof}

Applying this argument degreewise, we obtain the following.
\begin{theorem}\label{thm:geocrit} 
Let $G$ be a finitely presented discrete group of type $FP^{\infty}$ and $\B$ a bounding class. The following are equivalent. 
\begin{itemize}
	\item[(${\B} 1'$)] $(G,L)$ is strongly $\B$-isocohomological.
	\item[(${\B} 2'$)] ${\B}H^*(G;V)\to H^*(G;V)$ is
		surjective for all bornological ${\mathcal{H}}_{{\B},L}(G)$-modules $V$.
	\item[$({\B} 3'$)] $G$ is type $\B FP^{\infty}$.
\end{itemize}
\end{theorem}



\section{Relative constructions}
We show how the results in the previous section may be ``relativized''.

\subsection{Relative $HF^{n}$ and the Brown-Bieri-Eckmann condition}\label{sect:RelBBE}

In this subsection, $n$ will denote an arbitrary cardinal $\leq \infty$. Given a family of subgroups $\{H_{\alpha}\}_{\alpha\in\Lambda}$ of $G$, let
\[
\Delta = \Delta(\{H_{\alpha}\}_{\alpha\in\Lambda}) := \ker\left(\underset{\alpha\in\Lambda}
	{\bigoplus}\Z [G/H_{\alpha}]\overset{\varepsilon}\to \Z \right)\label{def:fpn1}
\]
where $\varepsilon$ is the linear extension of the map $gH_{\alpha}\mapsto 1$. Then $\Delta$ is a 
$\Z [G]$-module; given a second $\Z [G]$-module $A$, the homology of $G$ relative 
to the family of subgroups $\{H_{\alpha}\}_{\alpha\in\Lambda}$ with coefficients in $A$ is \cite{BE2}
\[
H_*(G,\{H_{\alpha}\}_{\alpha\in\Lambda};A) := \Tor_{*-1}^{\Z [G]}(\Delta,A)
\]
Algebraically, this makes perfect sense regardless of how the groups intersect. Our object is to establish 
necessary and sufficient conditions for relative finiteness. A natural starting point is

\begin{lemma}\label{lemma:fpn}
Suppose that
\begin{enumerate}
\item the indexing set $\Lambda$ is finite,
\item each subgroup $H_{\alpha}$ is finitely generated,
\item $G$ is finitely presented,
\item $\Tor_*^{\Z [G]}(\Delta,\prod \Z [G]) = 0$ for all $1\leq *\leq (n-1)$ and direct 
	products $\prod \Z [G]$ of copies of $\Z [G]$.
\end{enumerate}
Then $\Delta$ is type $FP^{n}$ over $\Z [G]$.
\end{lemma}

\begin{proof}  Consider the commuting diagram
\begin{center}
\centerline{
\xymatrix{
\txt{\phantom{XX}} 0\txt{\phantom{XX}}\ar[d]\ar@<.75ex>@{>->}[r] & \underset{\alpha\in\Lambda}{\oplus} \Z [G]\ar@<.75ex>[r]^{=}\ar@{->>}[d]^{p_1} & 
 \underset{\alpha\in\Lambda}{\oplus} \Z [G]\ar@{->>}[d]^{p_2}\\
\txt{\phantom{XX}}\Delta\txt{\phantom{XX}}\ar@<.75ex>@{>->}[r]  & \underset{\alpha\in\Lambda}{\oplus}
 \Z [G/H_{\alpha}]\ar@<.75ex>@{->>}[r]^-{\varepsilon} & \Z 
}}
\end{center}
where on each summand $p_1$ is the natural projection $\Z [G]\twoheadrightarrow \Z [G/H_{\alpha}]$ 
and $p_2 = \varepsilon\circ p_1$. Denote $\underset{\alpha\in\Lambda}{\oplus} \Z [G/H_{\alpha}]$ by $E$, 
$\Z $ by $B$, and set $P(E) :=  \underset{\alpha\in\Lambda}{\oplus} \Z [G]$, $\Omega(E) := \ker(p_1)$, 
$\Omega(B) := \ker(p_2)$. By the Snake Lemma, the above diagram yields a short-exact sequence
$\Omega(E)\rightarrowtail \Omega(B)\twoheadrightarrow \Delta$. Now consider the pull-back diagram
\begin{center}
\centerline{
\xymatrix{
\Omega(E) \ar@{=}[r]\ar@{>->}[d] & \Omega(E)\ar@{>->}[d]\\
P(E,\Delta)\ar@{->>}[d]\ar@{>->}[r] & P(E)\ar@{->>}[d]\\
\Delta\ar@{>->}[r] & E
}}
\end{center}
There is a natural isomorphism of $\Z [G]$-modules $P(E,\Delta)\isom \Omega(B)$, from which we conclude the existence of a short-exact sequence
\begin{equation}\label{eq:short}
\Omega(B)\isom P(E,\Delta)\rightarrowtail \Delta\oplus P(E)\twoheadrightarrow E
\end{equation}
Conditions {\it 1.} and {\it 2.} imply $E$ is finitely-presented over $\Z [G]$, and condition {\it 3.} implies $\Omega(B)$ is finitely-presented over 
$\Z [G]$. By (\ref{eq:short}), $\Delta\oplus P(E)$ is finitely-presented over $\Z [G]$. But $P(E)$ is a finitely-generated free module over 
$\Z [G]$ (by {\it 1.}), so $\Delta$ itself must be finitely-presented over $\Z [G]$. The result now follows from Prop. 1.2 of \cite{BE1}.
\end{proof}

\begin{definition} 
The group $G$ is type $FP^{n+1}$ (resp. $FF^{n+1}$) rel.\ $\{H_{\alpha}\}_{\alpha\in\Lambda}$ if the 
$\Z [G]$-module $\Delta$ is type $FP^n$ (resp. $FF^n$) over $\Z [G]$.
\end{definition}

As usual, if we are not concerned with constructing finite resolutions but only ones which are finite-dimensional through the 
given range, the conditions $FF^n$ and $FP^n$ agree.

We consider an alternative definition, which will provide the bridge between the algebraic and topological setting.

\begin{definition}\label{def:relfpn2} 
A \underline{resolution of $\Z $ over $\Z [G]$ relative to a family of subgroups}
\underline{$\{H_{\alpha}\}_{\alpha\in\Lambda}$} is a projective resolution $P_*$ of $\Z $ over $\Z [G]$ satisfying
\[
P_m = \oplus_{\alpha\in\Lambda} Ind^G_{H_{\alpha}}(W^{\alpha}_m)\oplus Q_m
\] 
where for each $\alpha\in\Lambda$, $W_*^{\alpha}$ is a projective resolution of $\Z $ over $\Z [H_{\alpha}]$ and
$Ind^G_{H_{\alpha}}(W^{\alpha}_*)\hookrightarrow P_*$ is an inclusion of complexes. Then $G$ is type $\widetilde{FP}^n$, respectively type 
$\widetilde{FF}^n$, if $Q_m$ is finitely generated projective, respectively free over $\Z [G]$, for all finite $m\leq n$.
\end{definition}

Again, when the resolutions are allowed to be infinite, $\widetilde{FP}^n$ and $\widetilde{FF}^n$ are equivalent conditions. 

\begin{lemma}\label{lemma:equiv} 
If $G$ is type $FP^n$ relative to $\{H_{\alpha}\}_{\alpha\in\Lambda}$, it is type $\widetilde{FP}^n$ relative 
to $\{H_{\alpha}\}_{\alpha\in\Lambda}$. If conditions {\it 1.} - {\it 3.} in Lemma \ref{lemma:fpn} are satisfied, then the converse is true.
\end{lemma}
\begin{proof} 
Let $R_*$ be a resolution of $\Delta$ over $\Z [G]$, and let $S_*$ be the $\Z [G]$-resolution 
$\underset{\alpha\in\Lambda}{\oplus} Ind^G_{H_{\alpha}}(W_*^{\alpha})$ of $\underset{\alpha\in\Lambda}{\oplus} \Z [G/H_{\alpha}]$ where $W^{\alpha}_*$ 
is a $\Z [{H_{\alpha}}]$-resolution of $\Z $ for each $\alpha\in\Lambda$.  Let $f_*:R_*\to S_*$ be a map of $\Z [G]$-resolutions 
covering the inclusion $\Delta\rightarrowtail \underset{\alpha\in\Lambda}{\oplus} \Z [G/H_{\alpha}]$, and let $M(f_*)$ be the algebraic mapping cone of 
$f_*$. Then $M(f_*)$ provides a resolution of $\Z $ over $\Z [G]$ rel.\ $\{H_{\alpha}\}_{\alpha\in\Lambda}$ where $Q_m = R_{m-1}$. Hence type 
$FP^n$ rel.\ $\{H_{\alpha}\}_{\alpha\in\Lambda}$ implies type $\widetilde{FP}^n$ rel.\ $\{H_{\alpha}\}_{\alpha\in\Lambda}$. Now suppose $G$ is type 
$\widetilde{FP}^n$  rel.\ $\{H_{\alpha}\}_{\alpha\in\Lambda}$. As we have seen, the first three conditions of Lemma \ref{lemma:fpn} imply $\Delta$ is 
finitely-presented. Let $T_*$ be a $\Z [G]$-resolution of $G$  rel.\ $\{H_{\alpha}\}_{\alpha\in\Lambda}$. Then $S_*$ is a subcomplex of $T_*$, with the 
inclusion $S_*\hookrightarrow T_*$ covering the projection $\varepsilon:\underset{\alpha\in\Lambda}{\oplus} \Z [G/H_{\alpha}]\twoheadrightarrow 
\Z $. The quotient complex $T_*/S_* = Q_* = \{Q_m\}$ satisfies the property
\[
\Tor_*^{\Z [G]}(\Delta,A) = H_{*+1}^G(Q_*\tensor A) = H_{*+1}(Q_*\underset{\Z [G]}{\tensor} A)
\]
If $A = \prod \Z [G]$ and $Q_m$ is finitely-generated projective for finite $m\leq n$, then
\[
\Tor_k^{\Z [G]}\left(\Delta,\prod \Z [G]\right) = H_{k+1}^G\left(Q_*\tensor \prod \Z [G]\right) = 
H_{k+1}\left(\left\{\prod Q_m\right\}_{m\geq 0}\right) = 0
\]
for all finite $k\leq (n-1)$. By Lemma \ref{lemma:fpn} this completes the proof.
\end{proof}

We now consider the topological analogue. Let $\{A_{\alpha}\}_{\alpha\in\Lambda}$ be a family of subcomplexes of a complex $X$, and let 
$A = \bigcup_{\alpha\in\Lambda} A_{\alpha}\subseteq X$.
We say that $X$ is type \underline{$HF^{n}$ relative to $\{A_{\alpha}\}_{\alpha\in\Lambda}$}  if $X/A \homotopic Y$ 
with $Y$ having finitely many cells (or simplices) in each finite dimension $m \leq n$. 

If $\{H_{\alpha}\}_{\alpha\in\Lambda}$ is a family of subgroups of $G$, then whenever $\Lambda$ contains more 
than one element $\coprod BH_{\alpha}$ will not be a subspace of the standard model for $BG$, as the classifying 
spaces $BH_{\alpha}$ all contain the common basepoint (and more if the intersections are non-trivial). However, 
the $\Z [G]$-module $\Delta$ is modeled on the disjoint union of the classifying spaces $\{BH_{\alpha}\}$. 
To accommodate this, we will need a different model for $BG$. Recall that if $T$ is a discrete set, one may form 
the free simplicial set $S_{\bullet}(T)$ generated by $T$, with 
\begin{gather}
S_m(T) := T^{m+1};\\
\partial_i(t_0,t_1,\dots,t_n) = (t_0,t_1,\dots,\widehat{t_i},\dots,t_n);\\
s_j(t_0,t_1,\dots,t_n) = (t_0,t_1,\dots,t_j,t_j,\dots,t_n)
\end{gather}
In other words, the face and degeneracy maps are given by deletion and repetition. Any element of $T$ can be used 
to define a simplical contraction of $S_{\bullet}(T)$, yielding $S_{\bullet}(T)\homotopic *$ for all sets $T$. 
Moreover, if $T$ is a free $G$-set, then the diagonal action of $G$ makes $S_{\bullet}(T)$ a simplicial free $G$-set, 
hence a simplicial model for a universal contractible $G$-space. The standard homogeneous bar resolution of $G$ - $EG$ - 
arises when one takes $T = G$ with left $G$-action given by multiplication. 

\begin{definition} 
For an indexing set $\Lambda$, let
\[
G(\Lambda) := \underset{\alpha\in\Lambda}{\coprod} G
\]
with $G$-action given by left multiplication on each component. Then
\begin{gather*}
EG(\Lambda) := S_{\bullet}(G(\Lambda))\\
BG(\Lambda) := EG(\Lambda)/G
\end{gather*}
\end{definition}

Again, if $\{T_{\alpha}\}_{\alpha\in\Lambda}$ is a collection of $G$-sets, there is an evident inclusion of simplicial $G$-sets
\begin{equation}\label{eqn:inc}
\underset{\alpha\in\Lambda}{\coprod} S_{\bullet}\left(T_{\alpha}\right)\overset{i_{\Lambda}}{\hookrightarrow} 
S_{\bullet}\left(\underset{\alpha\in\Lambda}{\coprod} T_{\alpha}\right)
\end{equation}
which is both equivariant and functorial.

\begin{definition} A group $G$ is of type  $HF^n$ relative to a family of subgroups $\{H_{\alpha}\}_{\alpha\in\Lambda}$ 
($n\leq \infty$) if $BG(\Lambda)$ is of type $HF^n$ relative to  the image of $\underset{\alpha\in\Lambda}{\coprod} BH_{\alpha}$ 
under the composition
\[
\underset{\alpha\in\Lambda}{\coprod} BH_{\alpha}\hookrightarrow \underset{\alpha\in\Lambda}{\coprod} BG\overset{i_{\Lambda}}{\hookrightarrow} BG(\Lambda)
\]
\end{definition}

\begin{proposition}\label{prop:hftoff}  
Assume the indexing set $\Lambda$ is finite. If $G$ is type $HF^{n}$ relative to 
$\{H_{\alpha}\}_{\alpha\in\Lambda}$, then it is type $\widetilde{FF}^{n}$ relative to 
$\{H_{\alpha}\}_{\alpha\in\Lambda}$.
\end{proposition}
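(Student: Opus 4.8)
The plan is to relativize, in the equivariant setting, the classical implication ``type $HF^{n}$ $\Rightarrow$ type $FP^{n}$''; the passages in the other direction, from the geometry back to the algebra, are already covered by Lemmas \ref{lemma:fpn} and \ref{lemma:equiv}.

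\emph{The geometric pair.} First I would fix the model $EG(\Lambda) = S_{\bullet}(G(\Lambda))$, a contractible free $G$-simplicial set, and the $G$-subcomplex $\mathcal{A}\subset EG(\Lambda)$ generated by the image of $\coprod_{\alpha} S_{\bullet}(H_{\alpha})$. Inside the $\alpha$-th copy of $S_{\bullet}(G)$ the $G$-orbit of $S_{\bullet}(H_{\alpha})$ is the induced object $Ind^{G}_{H_{\alpha}}(EH_{\alpha})$, so $\mathcal{A}\isom \coprod_{\alpha} Ind^{G}_{H_{\alpha}}(EH_{\alpha})$ as $G$-simplicial sets, whence $\overline{A} := \mathcal{A}/G = \coprod_{\alpha} BH_{\alpha}$ and
\[
C_{*}(\mathcal{A}) \;=\; \bigoplus_{\alpha} Ind^{G}_{H_{\alpha}}\!\bigl(C_{*}(EH_{\alpha})\bigr),
\]
each $C_{*}(EH_{\alpha})$ a free $\Z[H_{\alpha}]$-resolution of $\Z$. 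Since the simplices of $\mathcal{A}$ form a $G$-subset of those of $EG(\Lambda)$, degreewise $C_{*}(EG(\Lambda)) = C_{*}(\mathcal{A})\oplus C_{*}(EG(\Lambda),\mathcal{A})$, so $C_{*}(EG(\Lambda))$ is \emph{already} a resolution of $\Z$ over $\Z[G]$ relative to $\{H_{\alpha}\}$ in the sense of Definition \ref{def:relfpn2}, with $Q_{*} = C_{*}(EG(\Lambda),\mathcal{A})$. A short diagram chase with the long exact homology sequence of the pair (using $H_{0}(\mathcal{A}) = \bigoplus_{\alpha}\Z[G/H_{\alpha}]\overset{\varepsilon}{\to}\Z = H_{0}(EG(\Lambda))$ and $C_{0}(EG(\Lambda),\mathcal{A}) = 0$) shows that $Q_{*}$ has homology $\Delta$ concentrated in degree $1$; equivalently, $Q_{*}$ reindexed down by one is a free $\Z[G]$-resolution of $\Delta$. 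Thus the entire content of $\widetilde{FF}^{n}$ rel.\ $\{H_{\alpha}\}$ is to replace this $Q_{*}$ by one that is finitely generated free in each finite degree $m\le n$ --- equivalently, to produce a free $\Z[G]$-resolution of $\Delta$ that is finitely generated through degree $n-1$ (for $n=\infty$ one simply does this for every finite $n$).

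\emph{The inductive construction.} The hypothesis provides a homotopy equivalence $BG(\Lambda)/\overline{A}\homotopic Y$ with $Y$ finite in each dimension $m\le n$, where $BG(\Lambda)$ is a $K(G,1)$. From this I would build, by induction on $m$, a relative CW structure on a pair $(Z,\mathcal{A})$ with $Z$ a free contractible $G$-CW complex containing $\mathcal{A}$ as a $G$-subcomplex and having only finitely many $G$-orbits of relative cells in each finite dimension $m\le n$ (arbitrarily many allowed above $n$): at stage $m$ one uses the finitely many $m$-cells of $Y$, transported through the quotient map $BG(\Lambda)\to BG(\Lambda)/\overline{A}\homotopic Y$, to select finitely many $G$-equivariant attaching maps that kill the relative homology in degree $m$, exactly as in the absolute argument but keeping everything $G$-equivariant. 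Then $C_{*}(Z) = C_{*}(\mathcal{A})\oplus C_{*}(Z,\mathcal{A})$ is the desired relative resolution, with $Q_{m}=C_{m}(Z,\mathcal{A})$ finitely generated free over $\Z[G]$ for finite $m\le n$; as the resolution need not be globally finite, $\widetilde{FF}^{n}$ and $\widetilde{FP}^{n}$ coincide here.

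\emph{The main obstacle.} The delicate step is the inductive construction, because the finiteness we are handed lives on the \emph{quotient} $BG(\Lambda)/\overline{A}$, while collapsing the disconnected complex $\overline{A}=\coprod_{\alpha}BH_{\alpha}$ changes the fundamental group (van Kampen forces a free factor of rank $|\Lambda|-1$ in addition to killing the images of the $H_{\alpha}$). Hence $\pi_{1}Y$ is in general neither a quotient nor a subgroup of $G$, and one cannot simply transport the universal cover of $Y$ back up to $EG(\Lambda)$; the argument must instead be run $G$-equivariantly on the relative chain complex $C_{*}(EG(\Lambda),\mathcal{A})$ itself, organizing the induction so that the finitely many $m$-cells of $Y$ are shown to generate, over $\Z[G]$, the module of relative $m$-cycles modulo the boundaries contributed by the finitely many orbits of cells already attached in dimensions $<m$. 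Establishing this --- the relative analogue of the Wall/Brown finiteness criterion, in which the quotient $Y$ records precisely the $Q$-part of the relative resolution --- is the technical heart; once it is in place, the passage to an honest resolution and the bookkeeping with the $Ind^{G}_{H_{\alpha}}$-summands are routine, and the (finitely many) subgroups enter only through the unrestricted resolutions $C_{*}(EH_{\alpha})$.
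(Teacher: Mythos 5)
Your proposal matches the paper's proof in essential structure: both construct a suitable free contractible $G$-complex (the paper's $\widetilde{X}_G$, your $Z$) and read off the resolution from its chain complex, with $Q_*$ spanned by the cells lying outside the induced subcomplexes $Ind^G_{H_\alpha}(EH_\alpha)$. The only real difference is one of bookkeeping: the paper works downstairs, attaching finitely many cells per dimension $\le n$ to $\coprod_\alpha X_{H_\alpha}$ to produce $X_G\simeq BG$ and then lifting to the universal cover, whereas you build the equivariant model $Z\supset\mathcal{A}$ directly; you are also more explicit that the relative Wall--Brown inductive cell-attachment step is the technical heart, a point the paper dispatches with the single phrase ``the condition on $G$ implies we may construct a simplicial model $X_G$\ldots''.
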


\begin{proof} 
For each $\alpha$, fix a simplicial complex $X_{H_{\alpha}}\homotopic BH_{\alpha}$. The condition 
on $G$ implies we may construct a simplicial model $X_G\homotopic BG$ by adding simplices (i.e., cells) 
to $\underset{\alpha\in\Lambda}{\coprod} X_{H_{\alpha}}$ in such a way that through each finite dimension 
$m$ with $0\leq m\leq n$ the number of simplices attached is finite. Let ${\widetilde X}_G\homotopic EG$ 
be the universal cover of $X_G$, and $p: \widetilde{X}_G\to X_G$ the covering map. Taking 
$P_* = C_*(\widetilde{X}_G)$ gives the desired resolution, with $Ind^G_{H_{\alpha}}(W_*) = C_*(p^{-1}(X_{H_{\alpha}}))$ 
and $Q_n$ the free $\C[G]$-submodule of $P_n$ spanned over $\C$ by those $n$-cells not in 
$p^{-1}\left(\underset{\alpha\in\Lambda}{\coprod} X_{H_{\alpha}}\right)$.
\end{proof}

Putting it all together, we may summarize the situation as
\begin{theorem} If the conditions {\it 1.} - {\it 4.} of Lemma \ref{lemma:fpn} are satisfied, then the following are equivalent
\begin{itemize}
\item  $G$ is type $HF^{n}$ relative to  $\{H_{\alpha}\}_{\alpha\in\Lambda}$,
\item  $G$ is type $\widetilde{FF}^{n}$ relative to  $\{H_{\alpha}\}_{\alpha\in\Lambda}$,
\item  $G$ is type $FF^{n}$ relative to  $\{H_{\alpha}\}_{\alpha\in\Lambda}$.
\end{itemize}
\end{theorem}

\begin{proof}  
The second and third properties are equivalent by Lemma \ref{lemma:equiv}, and the first property implies 
the second by the previous proposition. The converse to Proposition \ref{prop:hftoff}, in the presence of 
conditions {\it 1.} through {\it 4.}, follows by the same method of geometrically realizing the resolution 
as in the classical proof of the Eilenberg-Ganea-Wall theorem (compare Thm. 7.1, Chap. VIII of \cite{Br1}).
\end{proof}

Recall from \cite{Br2} that a direct system of groups $\{A_{\beta}\}$ is said to be essentially trivial if 
for each $\beta_1$ there is a $\beta_2\geq \beta_1$ such that the map $A_{\beta_1}\to A_{\beta_2}$ is trivial. 

\begin{definition} 
A \underline{filtration of $EG(\Lambda)$ of finite $n$-type relative to  $\{H_{\alpha}\}_{\alpha\in\Lambda}$} 
is an increasing filtration of $EG(\Lambda)$ by a direct system of subcomplexes $\{X_{\beta}\}$ satisfying
\begin{itemize}
\item $\underset{\beta}{\varinjlim} X_{\beta} = EG(\Lambda)$,
\item $X_{\emptyset} := p^{-1}\left(\underset{\alpha\in\Lambda}{\coprod} BH_{\alpha}\right)\subseteq X_{\beta}$ for all $\beta$,
\item For each $\beta$, $X_{\beta}/X_{\emptyset}$ contains finitely many $G$-orbits in finite dimensions $\leq n$.
\end{itemize}
\end{definition}

\begin{theorem} 
Suppose there exists a filtration $\{X_{\beta}\}$ of finite $n$-type of $EG$ relative to 
$\{H_{\alpha}\}_{\alpha\in\Lambda}$. Assume conditions 1.\ - 4. of Lemma \ref{lemma:fpn}. 
If the directed system $\{H_*(X_{\beta})\}$ is essentially trivial for all finite $*\leq n$, 
then $\Delta$ is type $FP^{n-1}$ over $\Z [G]$.
\end{theorem}

\begin{proof} 
Let $R = \Z [G]$. Then for all finite $m$ with $1\leq m\leq (n-1)$
\begin{gather*}
\Tor^{R}_m\left(\Delta,\prod R\right)\isom \Tor_m^{R}\left(R,\Delta\tensor \prod R\right)\\
= H_m^G\left(EG(\Lambda);\Delta\tensor\prod R\right)\\
\isom H^G_{m+1}\left(EG,X_{\emptyset};\prod R\right)\\
\isom \underset{\beta}{\varinjlim} H^G_{m+1}\left(X_{\beta}/X_{\emptyset};\prod R\right)\\
\isom \underset{\beta}{\varinjlim} \prod H^G_{m+1}\left(X_{\beta}/X_{\emptyset};R\right)\\
=  \underset{\beta}{\varinjlim} \prod H_{m+1}(X_{\beta}/X_{\emptyset}) =0
\end{gather*}
with the last equality following by Lemma 2.1 of \cite{Br2}.
\end{proof}
Note that unlike Brown's condition in the absolute case (where we are not working relative to a 
family of subgroups), the quotient space $EG(\Lambda)/X_{\emptyset}$ has the homotopy-type of a 
wedge $\bigvee S^1$, with $H_1(\bigvee S^1) = \Delta$. This non-contractibility in dimension $1$ 
makes verifying the essential triviality of relative filtrations problematic.


\subsection{Relative Dehn functions}
There is a natural algebraic way to define the relative Dehn functions of $G$
with respect to some finite family of subgroups.  As before, we write $\Delta$ for the kernel of the augmentation map 
$\epsilon : \bigoplus_{\alpha \in \Lambda} \Z[ G/H_\alpha] \to \Z$, where
$\mathcal{H} = \{ H_\alpha \, | \, \alpha \in \Lambda\}$.  A typical element of
$\bigoplus_{\alpha \in \Lambda} \Z[ G/H_\alpha]$
has the form $\sum_{g \in \sqcup G/H_\alpha} \lambda_g g[H_{\alpha_g}]$.  Thus to
define the weighted structure
on $\Delta \subset \bigoplus_{\alpha \in \Lambda} \Z[ G/H_\alpha]$, it is enough to
define the weight of a generator, $g[H_\alpha]$.
Set $w( g[H_\alpha] ) = \min_{h \in H_\alpha} L( gh )$, where $L$ is the length
function equipped on $G$.

If $G$ is of type $FF^\infty$ relative to $\mathcal{H}$, there is a free resolution
of $\Delta$ over $\Z[G]$:
\[ \ldots \to R_2 \to R_1 \to R_0 \to  R_{-1} :=\Delta \to 0 \] 
which is finitely generated $\Z [G]$-module in each dimension on generating set $S_n = \{s_{1,n},\dots,s_{k_n,n}\}$.  
Fixing the weight of each generator to be $1$, we extend this to a weight function on $R_n$ in the usual way by
\[
w\left(\sum \lambda_i g_i s_{j_i,n}\right) := \sum |\lambda_i|\left(L(g_i) + w(s_{j_i,n})\right) = \sum |\lambda_i|(L(g_i) + 1)
\]

Up to linear equivalence, this definition is independent of the initial weighting given to the generating set. Now
choose a $\Z$-linear splitting of this resolution $\{s_n : R_n \to R_{n+1}\}_{n \geq -1}$; associated to this 
contraction are its Dehn functions, which we refer to as the \underline{relative Dehn functions of $G$ with respect 
to $\RH$}. Again, up to linear equivalence, the definition of the Dehn functions is independent of the choice 
of linear splitting.

\begin{definition}
The relative Dehn functions of $G$ with respect to $\mathcal{H}$ are
\underline{$\B$-bounded} if there is type $FF^\infty$ resolution of $\Delta$ over 
$\Z[G]$ such that each splitting $s_n$ is bounded by an element of $\B$.
\end{definition}

\begin{lemma}\label{lem:RelDehnFnctn}
Suppose $\mathcal{L}\preceq \B$.  
If the relative Dehn functions of $G$ with respect to $\mathcal{H}$ are $\B$-bounded
for a particular type 
$FF^\infty$ resolution, then they are $\B$-bounded for all type $FF^\infty$
resolutions. 
\end{lemma}
This is a relative version of the statement that the Dehn functions of a group $G$
does not depend on which 
type $HF^\infty$ classifying space is used in their construction, up to equivalence.

Since $G$ is of relative type $FF^\infty$, it is also of relative type
$\widetilde{FF}^\infty$, with respect to $\mathcal{H}$.
As above, this gives a projective resolution of $\Z$ over $\Z[G]$ of the form
\[ P_m = \bigoplus_{\alpha \in \Lambda} Ind_{H_\alpha}^G (W_m^\alpha ) \oplus Q_m  \]
where for each $\alpha \in \Lambda$, $W_*^\alpha$ is a projective resolution of $\Z$
over
$\Z[H_\alpha]$, and $Q_m$ is a finitely generated free $\Z[G]$-module.
At each level,  $Ind_{H_\alpha}^{G}( W_m^\alpha )$ is a direct summand of $P_m$. 
Taking the quotient by them in each degree yields the sequence
\[ \ldots \to Q_3 \to Q_2 \to Q_1 \twoheadrightarrow Q_0 = 0 \]
which is exact above dimension $1$, and for which the cokernel of
$Q_2 \to Q_1$ is $\Delta$. Thus
\[ \ldots \to Q_3 \to Q_2 \to Q_1  \to \Delta \to 0 \]
is a type $FF^\infty$  resolution of $\Delta$ over $\Z[G]$.

Because the homology of both $P_*$ and $Q_*$ vanish above dimension $1$, one can in that range construct 
a chain contraction $\{s^P_n:P_n \to P_{n+1}\}$ for which the composition
\[
s^Q_n := Q_n \hookrightarrow P_n\overset{s^P_n}{\longrightarrow} P_{n+1}\twoheadrightarrow
Q_{n+1}
\]
yields a chain contraction of $Q_*$ for $* > 1$. This splitting, spliced together with a weight-minimizing 
section $s^Q_0:\Delta\to Q_1$ of the projection $Q_1 \twoheadrightarrow \Delta$, one can define the 
\underline{topological relative Dehn functions of $G$ relative to $\RH$} to be the Dehn functions associated 
to the linear contraction $\{s^Q_n\}_{n\geq 0}$. We say the topological Dehn functions are $\B$-bounded if 
the Dehn functions of $\{s^Q_n\}_{n\geq 0}$ are $\B$-bounded.

\begin{lemma}
Suppose $G$ is of type $FF^\infty$ relative to $\mathcal{H}$.  $G$ has $\B$-bounded algebraic
relative Dehn functions with respect to $\RH$ if and only if it has $\B$-bounded topological
relative Dehn functions with respect to $\RH$ 
\end{lemma}
This follows immediately from Lemma \ref{lem:RelDehnFnctn}.

The term ``topological relative Dehn function'' is justified by the following interpretation 
of them. Assume $G$ is $HF^{\infty}$ relative to $\mathcal{H}$, as before. Start with a 
relative homology cycle in $x\in Z_n(EG(\Lambda), E(\mathcal{H}))$. Then 
$\partial(x)\in C_{n-1}(E({\RH}))$. Each part of $\partial(x)$ lying in a connected component 
of $E({\RH})$ can be coned off, yielding an absolute cycle  $x'\in Z_n(EG(\Delta))$. Choose a 
weight-minimizing $y\in C_{n+1}(EG(\Delta))$ with $\partial(y) = x'$, and take its weight 
relative to the subspace $E(\RH)$; i.e., only total the weights of those $(n+1)$-simplices 
used to construct $z$ which do not lie in $E(\mathcal{H})$. The resulting Dehn function 
computed using this construction agrees (up to the usual equivalence of Dehn functions) with 
the one derived from $\{s^Q_n\}_{n \geq 0}$.

%
%

One can view the nonexistence of a relative Dehn function in a particular degree
as an obstruction to completing the type $FF^\infty$ resolution of $\Delta$,
\[ \ldots \to Q_3 \to Q_2 \to Q_1  \to \Delta \to 0 \]
to yield a type $FF^\infty$ bornological resolution of $\BD$ (defined below).
\[ \ldots \to \B Q_3 \to \B Q_2 \to \B Q_1  \to \BD \to 0 \]
Although there is always a bounded section $\Delta \to Q_1$,
the obstruction to constructing a bounded section
$Q_1 \to Q_2$ is, in general, nontrivial in the unweighted setting.
The relative Dehn functions for $n >1$ do not suffer the same issue.


\subsection{Relative $\B$-bounded cohomology}\label{sect:RelBBC}

We construct a relative $\B$-bounded cohomology theory, which closely mirrors the 
construction of relative group cohomology in \cite{Au, BE2}.
Let $G$ be a discrete group and let $\RH = \{ H_\alpha\, | \, \alpha \in \Lambda \}$
be a finite collection of subgroups of $G$.
Let $G/\RH$ be the disjoint union $\bigsqcup_{\alpha \in \Lambda} G/H_\alpha$.
For a subgroup $H$ of $G$ let $\C[G/H]$ be the $\C$-vector space with basis
the left cosets $G/H$.  
Let $\CGRH = \oplus_{\alpha \in \Lambda} \C[G/H_\alpha]$ which will be identified as finitely supported 
functions $G/\RH \to \C$.  Denote the kernel of the augmentation $\varepsilon : \CGRH \to \C$
by $\Delta$.  

\begin{definition}
The relative cohomology of a discrete group $G$, with respect to a 
collection $\RH$ of subgroups of $G$ with coefficients in a $\C[G]$-module A, 
is given by \[ H^k(G, \RH; A) = \Ext_{\C[G]}^{k-1}(\Delta, A). \]
\end{definition}

Denote by $H^k(\RH;A) = \prod_{\alpha \in \Lambda} H^k( H_\alpha; A)$.  The definition of relative 
cohomology yields the following consequence, proved in \cite{Au} for the case of a single subgroup and in 
\cite{BE2} for many subgroups.

\begin{theorem}[Auslander, Bieri-Eckmann]\label{rellongexact}
Let $G$ and $\RH$ be as above.  For any $\C[G]$-module $A$ there is a long exact sequence:
\[  \ldots  \to H^k(G;A) \to H^k(\RH;A) \to H^{k+1}(G,\RH;A) \to H^{k+1}(G;A) \to  \ldots .\]
\end{theorem}

The length-function $L$ on $G$ induces a weight, $w$, on the cosets $G/H$ given by
$w( gH ) = \min_{h \in H} L(gh)$.  With this weight, define the following
bornological $\BG$-module:  

\[ \BGH = \{ f : G/H \to \C \, | \,  \forall_{\phi \in \B} \sum_{x \in G/H} |f(x)|\phi(w(x)) < \infty \}. \]  
This is a Frechet space in the norms given by
\[ \| f \|_\phi = \sum_{x \in G/H} | f( x ) | \phi(w( x )). \]

Let $\RH = \{ H_\alpha \, | \, \alpha \in \Lambda \}$ be a finite collection of subgroups of $G$, and define  
\[\BGRH = \{ f : G/\RH \to \C \, | \, \forall_{\phi \in \B} \sum_{x \in G/\RH} |f(x)|\phi(w(x)) < \infty \}.\]
As $\RH$ is a finite family of subgroups, $\BGRH = \oplus_{\alpha \in \Lambda} \BGHi$.

The augmentation map $\varepsilon : \BGRH \to \C$ is given by
$\varepsilon( f ) = \sum_{x\in G/\RH} f(x)$.
As $\varepsilon( f ) \leq \| f \|_1$, $\varepsilon$ is a bounded map.
Denote the augmentation kernel by $\BD = \ker \varepsilon$.

\begin{definition}
The \underline{relative $\B$-bounded cohomology} of a discrete group $G$, with respect to a collection 
$\RH$ of subgroups of $G$ with coefficients in an $\BG$-module A, is given by 
${\B H^k}(G, \RH; A) = \Ext_\BG^{k-1}(\BD, A)$, where this $\Ext_\BG^*( \cdot, A)$ functor 
is taken over the category of bornological $\BG$-modules.
\end{definition}

As in the absolute cohomology theory, there is a comparison homomorphism 
$\B H^*( G, \RH; V ) \to H^*( G, \RH; V )$ for any bornological $\BG$-module $V$.
\begin{definition}
Let $G$ be a discrete group with length function $L$, and let $\RH$ be a finite collection of 
subgroups, and let $V$ be a $\BG$-module.  
We say $G$ is relatively $\B$-isocohomological
to $\RH$ with respect to $V$ (abbr. $V$-$\B$RIC) if the relative comparison 
$\B H^*( G, \RH; V ) \to H^*( G, \RH; V )$ is an isomorphism of cohomology groups in all degrees.
Similarly $G$ is relatively $\B$-isocohomological to $\RH$ (abbr. $\B$-RIC) if
it is $\C$-$\B$RIC, and $G$ is strongly relatively $\B$-isocohomological to $\RH$
(abbr. $\B$-SRIC) if it is $V$-$\B$RIC for all $\BG$-modules $V$.
\end{definition}

If $G$ is a group and $H$ is a subgroup, there is an isomorphism: 
\[\Ext_{\C[G]}^*( \C[G/H], \C ) \isom \Ext_{\C[H]}^*( \C, \C).\]  A first step in extending relative
cohomology to the $\B$-bounded framework will be the following analogue.

\begin{lemma}\label{shap}
Let $G$ be a group with length function $L$, $H$ a subgroup of $G$ equipped with the
restricted length function, and $\B$ a multiplicative bounding class.  For any
bornological $\BG$-module $A$, there is an isomorphism:
\[ \Ext_{\BG}^*( \BGH, A ) \isom \Ext_{\BH}^*( \C, A). \]
\end{lemma}

Before proving Lemma \ref{shap} we first turn our attention to a few additional results
which will be necessary.
\begin{lemma}
Let $\B$ be a multiplicative bounding class, and endow the coset space $G/H$ with the weight
$w(gH) = \min_{h \in H} L( gh )$, and $L$ is the length function on $G$.
Then $\BG \isom \BGH \ttensor \BH$ both as bornological 
vector spaces and as right $\BH$-modules.
\end{lemma}
\begin{proof}
Let $R$ be a system of minimal length representatives for left cosets of $H$ in $G$.
For an $r \in R$, the length of $r$ in $G$ is equal to the length of $rH \in G/H$, 
so $\BGH \isom \BR$ as bornological vector spaces.  
For $g \in G$ there is a unique $h_g \in H$ and $r_g \in R$ such that $g = r_g h_g$.

Let $\Phi : \BG \rightarrow \BR \ttensor \BH$ be defined on basis elements by
$\Phi( g ) = ( r_g ) \tensor ( h_g )$ and extended by linearity.
For $\lambda, \mu \in \B$, let $\nu \in \B$ such that $\nu(n) \geq \lambda(2n) \mu(2n)$.
\begin{eqnarray*}
| \Phi( g ) |_{\lambda,\mu} & = & | ( r_g ) \tensor ( h_g ) |_{\lambda,\mu}\\
 & = & \lambda(L(r_g)) \mu(L(h_g))\\
 & \leq & \lambda(L(g)) \mu(L(r_g^{-1}) + L(r_g h_g))\\
 & \leq & \lambda(L(g)) \mu( 2L(g) )\\
 & \leq & \lambda( 2L(g) ) \mu( 2L(g) )\\
 & \leq & \nu( L(g) ).
\end{eqnarray*}
It follows that $\Phi$ is bounded.

Conversely let $\Psi' : \BR \times \BH \rightarrow \BG$ be defined by
$\Psi'( ( r, h ) ) = ( rh )$.  For $\lambda \in \B$, let $\lambda'(n) = \lambda(2n)+1$.
By the properties of bounding classes, $\lambda' \in \B$ as well.  
For $r \in R$ and $h \in H$, set $M_{r,h} = \max \{ L(r), L(h)\}$ and $m_{r,h} = \min\{ L(r), L(h)\}$.
We have:
\begin{eqnarray*}
| \Psi'( r,h ) |_\lambda & = & | (rh) |_\lambda \\
 & = & \lambda( L( rh ) )\\
 & \leq & \lambda( L(r) + L(h) )\\
 & \leq & \lambda( 2M_{r,h} )\\
 & \leq & \lambda'(M_{r,h})\\
 & \leq & \lambda'(M_{r,h}) \lambda'( m_{r,h} )\\
 & = & \lambda'( L(r) ) \lambda'( L(h) ).
\end{eqnarray*}
As $\Psi'$ is bounded, it extends to a bounded
$\Psi : \BR \ttensor \BH \rightarrow \BG$.
These are the required bornological isomorphisms.
\end{proof}

\begin{lemma}
For any bounding class $\B$, $\BG \ttensor_{\BH} \C \isom \BGH$, where $H$ is endowed with the 
restricted length function and $G/H$ is endowed with the weight $w$.
\end{lemma}
\begin{proof}
By \cite{M2}, if $E = H \ttensor A$ then $E \ttensor_{A} F \isom H \ttensor F$.  Appealing to
the previous lemma we obtain the following.
\begin{eqnarray*}
\BG \ttensor_{\BH} \C & \isom & \left(\BGH \ttensor \BH \right) \ttensor_{\BH} \C \\
	& \isom & \BGH \ttensor \C \\
	& \isom & \BGH.
\end{eqnarray*}
\end{proof}

\begin{proof}[Proof of Lemma \ref{shap}]
Consider 
\[ \ldots \rightarrow \BH^{\ttensor 3} \rightarrow \BH \ttensor \BH \rightarrow  \BH \rightarrow \C \rightarrow 0 \]
where the boundary map $d_{n+1} : \BH^{\ttensor n+1} \rightarrow \BH^{\ttensor n}$ is given by
\begin{eqnarray*}
d_{n+1}( h_0, h_1, \ldots, h_n ) & = & ( h_0 h_1, h_2, \ldots, h_n ) - ( h_0, h_1 h_2, h_3, \ldots, h_n )\\
 &  & + \ldots + (-1)^{n-1}( h_0, h_1, \ldots, h_{n-1} h_n ) \\
 &  & + (-1)^{n}( h_0, \ldots, h_{n-1} )
\end{eqnarray*}
and $d_1: \BH \rightarrow \C$, given by $d_1( h_0 ) = 1$, is the augmentation.
There is a bounded contracting homotopy given by $s_{n+1}( h_0, \ldots, h_n ) = (1_G, h_0, \ldots, h_n )$,
where $1_G$ is the identity element of $G$, and $s_0 : \C \rightarrow \BG$ is given by $s_0(z) = z(1_G)$.

$\Ext_{\BH}^*( \C, A )$ is the cohomology of the cochain complex
\begin{multline*}
\bHom_{\BH}( \BH, A ) \rightarrow \bHom_{\BH}( \BH \ttensor \BH, A )\\
	\rightarrow \bHom_{\BH}( \BH^{\ttensor 3}, A ) \rightarrow \ldots .
\end{multline*}
By \cite{M2}, for $B$ a bornological algebra, $E$ any bornological space, and 
$F$ any bornological left $B$-module, $\bHom_{B}( B \ttensor E, F ) \isom \bHom( E, F )$.  Thus
$\Ext_{\BH}^*( \C, A )$ is the cohomology of 
\[ \bHom( \C, A ) \rightarrow \bHom( \BH, A ) \rightarrow \\
	\bHom( \BH^{\ttensor 2}, A ) \rightarrow \ldots . \]
Tensoring each of the left $\BH$ modules $\BH^{\ttensor n}$ by $\BG$ over $\BH$ yields 
\begin{multline*}
\ldots \rightarrow \BG \ttensor_{\BH} (\BH \ttensor \BH) \rightarrow \BG \ttensor_{\BH} \BH  \\
		\rightarrow \BG \ttensor_{\BH} \C \rightarrow 0.
\end{multline*}
As $\BG \ttensor_{\BH} \C \isom \BGH$, this reduces to 
\begin{multline*} 
\ldots \rightarrow \BG \ttensor_{\BH} (\BH \ttensor \BH) \rightarrow \BG \ttensor_{\BH} \BH  \\
		\rightarrow \BGH \rightarrow 0 .
\end{multline*}

The bornological isomorphism $\BG \ttensor_{\BH} \BH^{\ttensor n+1} \isom \BG \ttensor \BH^{\ttensor n}$, 
given by \cite{M2}, shows that this is
\[ \ldots \rightarrow \BG \ttensor \BH^{\ttensor 2} \rightarrow \BG \ttensor \BH \rightarrow \BG \\
		\twoheadrightarrow \BGH \rightarrow 0 . \]

The map $\delta_{n+1} : \BG \ttensor \BH^{\ttensor n} \rightarrow \BG \ttensor \BH^{\ttensor n-1}$
is given by
\begin{eqnarray*}
\delta_{n+1} ( g, h_0, \ldots, h_n ) & = & (g h_0, h_1, \ldots, h_n ) - (g, h_0 h_1, h_2, \ldots, h_n ) \\
 & & + \ldots + (-1)^{n}(g, h_0, \ldots, h_{n-1} h_n )\\
 & & + (-1)^{n+1}(g, h_0, \ldots, h_{n-1})
\end{eqnarray*}
while the map $\delta_1 : \BG \rightarrow \BGH$ is given by
$\delta_1( g ) = ( gH )$.  A bounded contracting homotopy $s'_n$ is constructed as follows.
The map $s'_0 : \BGH \rightarrow \BG$ is given by
$s'_0( gH ) = (r_g)$, where $r_g$ is the fixed minimal length representative of the coset $gH$ in $R$ as above.
The map $s'_1: \BG \rightarrow \BG \ttensor \BH$ is given by
$s'_1( g ) = ( r_g, h_g )$, where $r_g \in R$, $h_g \in H$, and $g = r_g h_g$.  The same 
For $n > 1$, $s'_{n}( g, h_0, \ldots, h_{n-2} ) = ( r_g, h_g, h_0, \ldots, h_{n-2})$.

As this is a projective resolution of $\BGH$ over $\BG$, $\Ext^*_{\BG}( \BGH, A )$ can be
calculated as the cohomology of 
\begin{multline*}
\bHom_{\BG}( \BG, A ) \rightarrow \bHom_{\BG}( \BG \ttensor \BH, A )  \\
		\rightarrow \bHom_{\BG}( \BG \ttensor \BH^{\ttensor 2}, A ) \rightarrow \ldots .
\end{multline*}
As $\bHom_{\BG}( \BG \ttensor \BH^{\ttensor n}, A ) \isom \bHom( \BH^{\ttensor n}, A )$, this is also
the same as the cohomology of
\[ \bHom( \C, A ) \rightarrow \bHom( \BH, A ) \rightarrow \bHom( \BH^{\ttensor 2}, A ) \rightarrow \ldots .\]
\end{proof}

Denote by ${\B H^k}(\RH;A) = \prod_{\alpha \in \Lambda} {\B H^k}( H_\alpha; A)$.
\begin{theorem}\label{prellongexact}
Let $G$ and $\RH$ be as above, and let $A$ be a bornological $\BG$-module.  For any 
multiplicative bounding class $\B$, there is a long exact sequence:
\[  \ldots  \to {\B H^k}(G;A) \to {\B H^k}(\RH;A) \to {\B H^{k+1}}(G,\RH;A) \to {\B H^{k+1}}(G;A) \to  \ldots  \]
where for each $H_\alpha \in \RH$, $H_\alpha$ is given the length function restricted from $G$ and
$G/\RH$ is given the minimal weighting function $w$ as above.
\end{theorem}

\begin{proof}
The following short exact sequence admits a bounded $\C$-splitting.
\[ 0 \to \BD \to \BGRH \stackrel{\varepsilon}{\rightarrow} \C \to 0 \]

Applying the bornological $\Ext_\BG^*( \cdot , A )$ functor , yields the
long exact sequence
\begin{multline*}  
\ldots  \to \Ext_\BG^k( \C, A ) \to \Ext_\BG^k( \BGRH, A ) \to \Ext_\BG^k( \BD, A ) \\
	\to \Ext_\BG^{k+1}(\C, A) \to  \ldots .
\end{multline*}

Making use of the isomorphisms,
\begin{eqnarray*}
 \Ext_\BG^k( \BGRH, A ) & = &  \Ext_\BG^k( \oplus_{\alpha \in \Lambda} \BGHi, A) \\
 & \isom & \prod_{\alpha \in \Lambda} \Ext_\BG^k( \BGHi, A ) \\
 & \isom & \prod_{\alpha \in \Lambda} \Ext_{\BHi}^k( \C, A)
\end{eqnarray*}
one obtains the exact sequence
\[
\ldots  \to \Ext_\BG^k( \C, A ) \to \prod_{\alpha \in \Lambda} \Ext_{\BHi}^k( \C, A ) \to \Ext_\BG^k( \BD, A )
	\to \Ext_\BG^{k+1}(\C, A) \to  \ldots .
\]

By definition $\Ext_\BG^k (\C, A) = \B H^{k+1}(G; A)$, and $\Ext_\BG^k( \BD, A ) = \B H^{k+1}( G, \RH; A)$.
\end{proof}

\begin{corollary}
Let $G$ be a finitely generated group with length function $L$, $\B$ a multiplicative bounding class, 
and $\RH = \{ H_\alpha \, | \, \alpha \in \Lambda\}$ a finite family of subgroups.
Suppose that there is a $\BG$-module $V$ such that each $H_\alpha$ is $V$-$\B$IC, in the length
function restricted from $G$.  If $G$ is $V$-$\B$RIC to $\RH$, then $G$ is $V$-$\B$IC.  
In particular, if each $H_\alpha$ is $\B$-SIC and $G$ is $\B$-SRIC to $\RH$, then
$G$ is $\B$-SIC.
\end{corollary}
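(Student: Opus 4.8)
The plan is to compare the relative long exact sequence in $\B$-bounded cohomology (Theorem~\ref{prellongexact}) with the classical relative long exact sequence (Theorem~\ref{rellongexact}), using the comparison homomorphisms as the vertical maps, and then to apply the five lemma. Throughout, the fixed module $V$ is viewed, for each $\alpha$, as a $\BHi$-module by restriction of scalars along the inclusion $\BHi\hookrightarrow\BG$ determined by the restricted length function, so that the hypothesis ``$H_\alpha$ is $V$-$\B$IC with respect to the restricted length function'' is meaningful.

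First I would set up the commuting ladder of long exact sequences. Both sequences are produced by applying the appropriate $\Ext$-functor to the short exact coefficient sequences $0\to\BD\to\BGRH\to\C\to0$ and $0\to\Delta\to\CGRH\to\C\to0$, the first of which carries a bounded $\C$-linear splitting (as already exploited in the proof of Theorem~\ref{prellongexact}) and the second a $\C$-linear splitting. The algebraic coefficient sequence includes into the bornological one compatibly with the completions, and this makes the comparison transformation a morphism of the corresponding $\Ext$-long-exact-sequences; in particular it intertwines the connecting homomorphisms, so one obtains a commuting ladder whose vertical maps at the $\RH$-slots are the comparison maps $\B H^*(\RH;V)\to H^*(\RH;V)$, at the $(G,\RH)$-slots are $\B H^*(G,\RH;V)\to H^*(G,\RH;V)$, and at the remaining slots are exactly the maps $\B H^*(G;V)\to H^*(G;V)$ whose bijectivity is wanted.

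Next I would read off which rungs of the ladder are isomorphisms. Since $\Lambda$ is finite, $\B H^k(\RH;V)=\prod_{\alpha\in\Lambda}\B H^k(H_\alpha;V)$ and the comparison map on it is the (finite) product of the comparison maps $\B H^k(H_\alpha;V)\to H^k(H_\alpha;V)$, each an isomorphism by the hypothesis that every $H_\alpha$ is $V$-$\B$IC; hence the $\RH$-rungs are isomorphisms in all degrees. Likewise the $(G,\RH)$-rungs are isomorphisms in all degrees because $G$ is $V$-$\B$RIC to $\RH$. Both long exact sequences begin $0\to\B H^0(G;V)\to\B H^0(\RH;V)\to\cdots$, resp.\ $0\to H^0(G;V)\to H^0(\RH;V)\to\cdots$ (left-exactness of $\Hom$), so in the ladder each vertical map $\B H^k(G;V)\to H^k(G;V)$ has two consecutive isomorphism-rungs on each side, with vanishing groups $0\to0$ standing in for the missing neighbours when $k$ is small. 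The five lemma then gives that $\B H^k(G;V)\to H^k(G;V)$ is an isomorphism for every $k\ge0$, i.e.\ $G$ is $V$-$\B$IC. The final assertion follows at once: if each $H_\alpha$ is $\B$-SIC it is in particular $V$-$\B$IC for every bornological $\BG$-module $V$, and $\B$-SRIC of $G$ to $\RH$ gives $V$-$\B$RIC for every such $V$, so applying the first part to every $V$ yields that $G$ is $\B$-SIC.

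The one genuinely non-formal step, and the place I expect the real difficulty to lie, is the naturality claim above: that the comparison transformation respects the connecting homomorphisms of the two long exact sequences, so that the ladder actually commutes. This is delicate because, as the authors emphasize, the category of bornological modules is not abelian; what rescues the argument is that Theorem~\ref{prellongexact} is built from a short exact sequence that splits by a \emph{bounded} $\C$-linear map, and one must check that this splitting can be chosen compatibly with the splitting of $0\to\Delta\to\CGRH\to\C\to0$, and that the induced comparison of $\Ext$-sequences is natural in the coefficient module. Granting that, everything else — the bookkeeping of isomorphic rungs and the appeal to the five lemma — is routine.
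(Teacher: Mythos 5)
Your proof is correct and follows the same route as the paper: form the commuting ladder from the relative long exact sequences in $\B$-bounded and ordinary cohomology (Theorems~\ref{prellongexact} and \ref{rellongexact}), note that the $\RH$- and $(G,\RH)$-rungs are isomorphisms by hypothesis, and conclude by the five lemma. The paper's proof is in fact just this one-line observation; your extra discussion of the compatibility of the comparison transformation with the connecting maps is a reasonable elaboration of a point the paper treats as immediate.
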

\begin{proof}
The comparison map yields a commutative diagram with top row
the long exact sequence from Theorem \ref{prellongexact} and the bottom row
the long exact sequence from Theorem \ref{rellongexact}.  The result
follows from the five-lemma.
\end{proof}

The notion of `niceness' defined above has an obvious extension to
free resolutions of modules other than $\Z$ over $\Z[G]$. The following
generalization of Lemma \ref{lem:niceExt} to the relative setting is straightforward.
\begin{lemma}\label{lem:niceExtRel}
Let $G$ be a group equipped with word-length function $L$, $R_*$ a $k$-nice resolution of $M$ over $\Z[G]$, 
$T_* = R_*\otimes\C$, and $\B$ and $\B'$ bounding classes.  Denote by $\mathcal{B}T_*$ the corresponding Frechet completion of 
$T_*$ with respect to the bounding class $\B$, as defined above. Further suppose that the weighted Dehn functions $\{ d_R^{w,n} \}$ 
are ${\B}'$-bounded in dimensions $n <k$, that $\B$ is a right ${\B}'$-class, and that $\B\succeq {\mathcal L}$ .  Then there 
exists a bounded chain null-homotopy $\{s_{n+1} : \mathcal{B}T_n \to \mathcal{B}T_{n+1}\}_{k > n \geq 0}$, implying ${\B}T_*$ is 
a continuous resolution of $\B M$ over $\BG$ through dimension $k$. Here, $\B M$ denotes the completion of $M \tensor \C$ via the
bounding class $\B$.
\end{lemma}

This yields a suitable complex from which we may calculate bounded relative cohomology of $G$ with respect to $\RH$.  

\begin{theorem}\label{thm:relgeocrit}
Suppose the finitely generated group $G$ is $HF^\infty$ relative to the finite family of 
finitely generated subgroups $\RH$.  The following are equivalent:
\begin{enumerate}
\item[(1)]The relative Dehn functions of $G$ relative to $\RH$ are each $\B$-bounded.
\item[(2)]$G$ is $\B$-SRIC with respect to $\RH$.
\item[(3)]The comparison map $\B H^*(G,\RH;A) \to H^*(G,\RH; A)$ is surjective for all bornological
	$\BG$-modules $A$.
\end{enumerate}
\end{theorem}
\begin{proof}

For (1) implies (2), suppose $A$ is a bornological $\BG$-module, and let 
\[ R_* := \ldots \to R_2 \to R_1 \to R_0 \to R_{-1} = \Delta_\Z \to 0 \]
be a nice type $FF^\infty$ resolution of $\Delta_\Z$, the integral augmentation kernel, 
over $\Z[G]$, and let
\[  T_* := \ldots \to T_2 \to T_1 \to T_0 \to T_{-1} = \Delta \to 0 \]
be given by $T_n = R_n \tensor \C$.  Here $\Delta$ is the complex augmentation kernel.
$T_*$ is a type $FF^\infty$ resolution of $\Delta$ over $\C[G]$.
As the relative Dehn functions are $\B$-bounded, the previous lemma
gives that $\B T_*$ is a bornologically projective resolution of $\BD$ over $\BG$.
Let $V_n$ be the complex vector space with one basis element for each generator of
$T_n$ over $\C[G]$.  There are isomorphisms
$T_n \isom \C[G] \tensor V_n$ and $\B T_n \isom \BG \ttensor V_n$.

Apply $\Hom_{\C[G]}( \cdot, A)$ to the deleted resolution $T_*$ 
yields a cochain complex with terms of the form $\Hom_{\C[G]}( T_n, A )$.
Applying $\bHom_{\BG}( \cdot, A)$ to the deleted resolution $\B T_*$
yields a cochain complex with terms of the form $\bHom_{\BG}( \B T_n, A )$.
\begin{eqnarray*}
\Hom_{\C[G]}( T_n, A ) & \isom & \Hom_{\C[G]}( \C[G] \tensor V_n, A )\\
	& \isom & \Hom( V_n, A )\\
	& \isom & \bHom_{\BG}( \BG \ttensor V_n, A )\\
	& \isom & \bHom_{\BG}( \B T_n, A )
\end{eqnarray*}

As $A$ was arbitrary we obtain that $G$ is $\B$-SRIC with respect to $\RH$.

The implication (2) implies (3) is obvious.
For (3) implies (1), follow the proof the implication ($\B$2) implies ($\B$1) of Theorem \ref{thm:geocrit} with the following
modifications.  Replace the absolute cocycles and boundaries, by the relative cocycles and boundaries.
The argument applies nearly verbatim.
\end{proof}

For the remainder of the section, we suppose that $G$ is a finitely presented group which acts cocompactly without 
inversion on a contractible complex $X$, with finite edge stabilizers and finitely generated vertex stabilizers $G_\sigma$.  
The higher weighted Dehn functions of $X$ bound the topological relative Dehn functions of $G$ with respect 
to the $\{G_\sigma\}$.  Applying Theorem \ref{thm:relgeocrit} we obtain the following.

\begin{theorem}
Suppose all of the higher weighted Dehn functions of $C_*(X)$ are $\B$-bounded.  
Then $G$ is $\B$-SRIC with respect to the $\{G_\sigma\}$.
\end{theorem}

To use this result effectively, we must be able to determine how the restricted
length function on $G_\sigma$ behaves, when compared to the usual word-length
function on $G_\sigma$.

\begin{lemma}\label{lem:Osin}
Suppose the first unweighted geometric Dehn function of $X$ is $\B$-bounded.  Then the standard word-length function, 
$L_{G_\sigma}$, on $G_\sigma$, for every $\sigma$, is 
$\B$-equivalent to the length function on $G_\sigma$, induced by the restriction of $L_G$ to $G_\sigma$.
Specifically, there exists a $\nu \in \B$ such that $L_{G_\sigma}(g) \leq \nu( L_G(g) )$ for all $g \in G_\sigma$
and all $\sigma$.
\end{lemma}
\begin{proof}
The finite edge stabilizers imply that the first relative Dehn function ( in the meaning of Osin \cite{Os} ),
is equivalent to the first Dehn function of $X$, by \cite{BC}.  Thus it is $\B$-bounded.
By Lemma 5.4 of \cite{Os}, the distortion of each $H \in \RH$ is bounded by the relative Dehn function.
Thus each $H$ is at most $\B$-distorted.
\end{proof}

The following is a generalization of a result in \cite{JR1}, which states that if a finitely generated group $G$ is
relatively hyperbolic to a family of finitely generated subgroups $\RH$, and if each $H \in \RH$ is $HF^\infty$
and $\mathcal{P}$-SIC, then $G$ is $\mathcal{P}$-SIC.

\begin{corollary}
Suppose the finitely generated group $G$ is relatively hyperbolic with respect to the family of finitely generated 
subgroups $\RH$.  If $\mathcal{L} \preceq \B$, then $G$ is $\B$-SRIC with respect to $\RH$.
\end{corollary}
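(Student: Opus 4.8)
The plan is to deduce the corollary from Theorem~\ref{thm:relgeocrit} via the implication $(1)\Rightarrow(2)$: it suffices to check that a group $G$ hyperbolic relative to the finite family $\RH$ of finitely generated subgroups is of type $HF^\infty$ relative to $\RH$, and that its relative Dehn functions with respect to $\RH$ are bounded by a linear function --- hence, since $\mathcal{L}\preceq\B$, $\B$-bounded.

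For the finiteness condition I would invoke Osin's theory of relative presentations \cite{Os} together with the construction of a classifying complex for a relatively hyperbolic group (a ``coned-off Rips complex'' in the sense of Dahmani, built over a fine hyperbolic graph in the sense of Bowditch): this produces a contractible complex $X$ on which $G$ acts without inversion and cocompactly --- finitely many orbits of cells in every dimension --- with finite edge stabilizers, with every vertex stabilizer conjugate to a member of $\RH$ or to a finite (possibly trivial) subgroup, and with $X^{(1)}$ a hyperbolic, fine geodesic graph. Over $\C$ the finite and trivial stabilizers contribute only finitely generated projective summands to $C_*(X;\C)$, so $X$ exhibits $G$ as being of type $HF^\infty$ relative to $\RH$ in the sense of Section~\ref{sect:RelBBE}, and the standing hypothesis of Theorem~\ref{thm:relgeocrit} is met.

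For the Dehn functions, equip $X$ with the $1$-skeleton weighting. Since $X^{(1)}$ is hyperbolic, the first unweighted geometric Dehn function of $X$ is linear, so Lemma~\ref{lem:Osin} applies: the length function induced on each vertex stabilizer by restricting that of $G$ is $\B$-equivalent to its word-length function, which identifies the relative cohomology occurring in Theorem~\ref{thm:relgeocrit} with the intended one. The heart of the matter is that all of the higher weighted Dehn functions of $C_*(X)$ are bounded by a linear function: a weighted $n$-cycle of weighted size $k$ is filled by coning along $1$-skeleton geodesics toward the basepoint, as in the standard proof that a hyperbolic group has linear higher Dehn functions (the fineness of $X^{(1)}$ controlling the coning near the peripheral vertices), and because the coning is geodesic the cells of the filling sit at distance from the basepoint comparable to those of the cycle, so the weighted filling volume is again linear in $k$. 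Crudely, $d_X^n$ linear and (\ref{eqn:compareDehn}) already give the quadratic bound $d_X^{w,n}(k)\le (d_X^n(k))^2$, which suffices whenever $\mathcal{P}\preceq\B$; the improvement to every $\B\succeq\mathcal{L}$ is precisely the geodesic-filling estimate just described. By the bounding statement immediately preceding Lemma~\ref{lem:Osin}, the topological relative Dehn functions of $G$ with respect to the family of vertex stabilizers are therefore $\B$-bounded; and passing, over $\C$, between that family and $\RH$ alters the relative cohomology --- bounded or unbounded --- only by $\Ext^0$-summands on which the comparison map is tautologically an isomorphism, so the relative Dehn functions of $G$ with respect to $\RH$ are themselves $\B$-bounded. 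Theorem~\ref{thm:relgeocrit}, $(1)\Rightarrow(2)$, then gives that $G$ is $\B$-SRIC relative to $\RH$.

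The main obstacle is the step just emphasized: showing the \emph{weighted} higher Dehn functions of the coned-off Rips complex are genuinely linear rather than merely quadratic --- equivalently, that the hyperbolic (and fine) coning can be carried out so the filling remains weight-close to the cycle. For the corollary restricted to $\B\succeq\mathcal{P}$ this is immediate from (\ref{eqn:compareDehn}); extracting it for every $\B\succeq\mathcal{L}$ is the real geometric content, and is the relative analogue of the known fact that hyperbolic groups have linear weighted higher Dehn functions. A secondary, purely bookkeeping, obstacle is checking that discarding the finite vertex stabilizers --- and, when $G$ has torsion, passing to $\C$-coefficients throughout --- leaves relative isocohomologicality undisturbed.
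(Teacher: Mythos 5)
Your route is essentially the paper's: the published proof invokes the Mineyev--Yaman contractible hyperbolic complex (with finite edge stabilizers and vertex stabilizers precisely the conjugates of the $H_\alpha$, which disposes of your bookkeeping worry about finite or trivial stabilizers), applies the theorem immediately preceding Lemma~\ref{lem:Osin} via the linearity of the higher weighted Dehn functions of that hyperbolic complex, and uses Lemma~\ref{lem:Osin} for the comparison between restricted and intrinsic length functions. The ``main obstacle'' you flag---that the weighted higher Dehn functions are genuinely linear rather than merely quadratic, which is what the case $\mathcal{L}\preceq\B\prec\mathcal{P}$ requires---is left implicit in the paper as well (it is asserted only in the remarks of Section~2.5), so your proposal is, if anything, more explicit than the paper's two-sentence argument about where the real geometric content lies.
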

\begin{proof}
By Mineyev-Yaman \cite{MY}, there is a contractible hyperbolic complex $X$ on which $G$ acts cocompactly with finite 
edge stabilizers, and vertex stabilizers precisely the $H$ and their translates. Lemma \ref{lem:Osin} gives the result.
\end{proof}

\begin{corollary}\label{cor:RHIC}
Suppose the finitely generated group $G$ is relatively hyperbolic with respect to the
family of finitely generated subgroups $\RH$, and $\B$ is a multiplicative bounding class with $\mathcal{L} \preceq \B$.  
For any bornological $\BG$-module $M$, if each $H \in \RH$ is $M$-$\B$IC, then $G$ is $M$-$\B$IC.  In particular, if
each $H$ is $\B$-SIC then $G$ is $\B$-SIC.
\end{corollary}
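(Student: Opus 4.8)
The plan is to chain together two results already in hand: the preceding corollary, which asserts that a finitely generated group $G$ relatively hyperbolic to a finite family $\RH$ of finitely generated subgroups is $\B$-SRIC with respect to $\RH$ once $\mathcal{L}\preceq\B$; and the corollary to Theorem~\ref{prellongexact}, which via the five-lemma comparison of the bounded and unbounded relative long exact sequences deduces that $G$ is $M$-$\B$IC as soon as $G$ is $M$-$\B$RIC to $\RH$ \emph{and} each $H_\alpha\in\RH$ is $M$-$\B$IC in the length function restricted from $G$. The only point that needs care is that last clause: the hypothesis of the corollary is that each $H\in\RH$ is $M$-$\B$IC, which one naturally reads with respect to the intrinsic word-length on $H$, whereas the corollary to Theorem~\ref{prellongexact} wants the length function $L|_H$ inherited from $G$.

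So the first step is to reconcile these two length functions. By Mineyev--Yaman, $G$ acts cocompactly and without inversion on a contractible hyperbolic complex $X$ with finite edge stabilizers and with vertex stabilizers the conjugates of the members of $\RH$. Because $X$ is hyperbolic its first unweighted geometric Dehn function is linear, hence $\B$-bounded as $\mathcal{L}\preceq\B$; Lemma~\ref{lem:Osin} then gives that for each $H\in\RH$ the intrinsic word-length function and the length function $L|_H$ restricted from $G$ are $\B$-equivalent. Since $\B$-equivalent length functions on a group determine the same bornological group algebra of $H$, and hence the same $\B$-bounded cohomology groups and the same comparison map, the property ``$H$ is $M$-$\B$IC'' is insensitive to the choice between these two length functions, and similarly for ``$H$ is $\B$-SIC''. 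Thus the hypothesis on the $H$'s transfers to exactly the form demanded by the corollary to Theorem~\ref{prellongexact}.

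With that reconciliation in place the remainder is formal. The preceding corollary gives that $G$ is $\B$-SRIC to $\RH$, in particular $M$-$\B$RIC to $\RH$ for the given bornological $\BG$-module $M$; feeding this together with ``each $H_\alpha$ is $M$-$\B$IC in the restricted length function'' into the corollary to Theorem~\ref{prellongexact} (taking $V=M$) yields that $G$ is $M$-$\B$IC. Specializing to $M=\C$ gives the $\B$-IC statement, and if every $H\in\RH$ is $\B$-SIC the same argument runs uniformly over all coefficient modules $M$ --- equivalently, one invokes the ``in particular'' clause of that corollary --- to conclude that $G$ is $\B$-SIC. The main obstacle is precisely the length-function bookkeeping of the second paragraph: one must be sure that Lemma~\ref{lem:Osin} is genuinely applicable (which rests on the hyperbolicity of the Mineyev--Yaman complex forcing a $\B$-bounded first geometric Dehn function, together with the identification in \cite{BC} of this with Osin's first relative Dehn function) and that $M$-$\B$IC and $\B$-SIC depend only on the $\B$-equivalence class of the length function. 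Once these are granted, the deduction is a two-line application of the two corollaries cited.
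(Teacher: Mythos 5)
Your proof is correct and follows the route the paper intends: the preceding corollary supplies $\B$-SRIC from the Mineyev--Yaman complex via Lemma~\ref{lem:Osin}, and the corollary to Theorem~\ref{prellongexact} then converts relative isocohomologicality plus isocohomologicality of the subgroups into isocohomologicality of $G$ by the five-lemma. The one remark worth making concerns your length-function reconciliation: since $M$ is given as a bornological $\BG$-module, the only way to restrict it to a module over the completed algebra of $H$ is to take $H$ with the length function inherited from $G$ (so that $\BH$ is a subalgebra of $\BG$); in other words, the statement ``$H$ is $M$-$\B$IC'' already tacitly uses the restricted length function, and the corollary to Theorem~\ref{prellongexact} applies directly. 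Your appeal to Lemma~\ref{lem:Osin} is still valuable, though---it is exactly what lets one verify the hypothesis by working with the intrinsic word-length on each $H$, which is what one does in practice---and when you invoke it you should note that passing between $\B$-equivalent length functions leaves the completed algebra $\BH$ (and hence the comparison map) unchanged precisely because $\B$ is weakly closed under right composition by $\mathcal{L}$ in one direction and, in the other direction, the bound $L_{G_\sigma}\le\nu(L_G)$ with $\nu\in\B$ is absorbed using the assumed structure on $\B$.
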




\section{Two spectral sequences in $\B$-bounded cohomology}

\subsection{The Hochschild-Serre spectral sequence}\label{sect:HSSS}

We begin with a finiteness result which was first proven for
polynomially bounded cohomology in \cite{M1}.

\begin{theorem} 
Let $(G,L)$ be $V$-$\B$IC with respect to the trivial 
$\BG$-module $V\neq 0$. Assume that $V$ is metrizable, 
with distance function $d_V$. Then for each $n\geq 0$,
\[
 {\B}H^n(G;V) = H^n(G;V)\isom \bigoplus^{k_n} V
\]
\end{theorem}

\begin{proof} 
By contradiction. First, note that the weight function on $C_*(EG)$
induces a weight function on $H_*(BG)$ by $w([x]) = \min\{w(x')\ |\
[x'] = [x]\}$. The statement that $G$ is $V$-$\B$IC
is then equivalent to requiring that for each $n\geq 1$ and for all
$[c]\in H^n(BG)$, there exists a $\phi_n\in{\B}$ such that for
all $[x]\in H_n(BG), d_V([c]([x]),0) \leq \phi_n(w([x]))$.
Next, $H^n(G;V)\isom \Hom(H_n(G),V)$ by the Universal Coefficient Theorem. Suppose $H^n(G;V)$ is not a 
finite sum of copies of $V$. This can only happen if $H_n(G)$ is infinite-dimensional over $\C$. Choose a countably
infinite linearly independent set $[x_1],[x_2],\dots,[x_n],\dots$
of elements in $H_n(G)$; we normalize each element so that
$w([x_i])= 1$ for each $1\leq i$. Then for each $i$, fix a
cohomology class $[c_i]\in H^n(G;V)$ with $d_V([c_i]([x_j]),0) = \delta_{ij}$. The 
set $\{[c_1], [c_2],\dots,[c_m],\dots\}$ is a countably infinite
generating set for a subspace $W = \prod_{1}^{\infty} \mathbb
C\subset H^n(G;V)$. An element of $W$ may be written as
\[C = (n_1,n_2,n_3,\dots,n_m,\dots)
\]
indicating that the $[c_i]$-component of $C$ is $n_i[c_i]$. Define a
cohomology class $[C^f]$ by
\begin{equation*}
[C^f] = (f(1),f(2),\dots,f(n),\dots)\in W
\end{equation*}
By construction,
\[
d_V([C^f]([x_m]),0) = f(m)[c_m]([x_m]) = f(m)
\]
for each $m\geq 1$. Choosing the function $f$ to be unbounded (which we can certainly do) makes $[C^f]$  unbounded on the set
of $n$-dimensional homology classes with weight 1. This contradicts the assumption that $(G,L)$ is 
$\B$-isocohomological with respect to $V$, regardless of the
choice of $\B$.
\end{proof} 

The following theorem was proven in \cite{O1} in the context of p.s. $G$-modules.
It was shown in the bornological case in \cite{Ra1}.

\begin{theorem}[\cite{Ra1,O1}]
Let $0 \to (G_1,L_1) \to (G_2,L_2) \to (G_3,L_3) \to 0$ be an extension of groups with word-length, with $G_3$ type ${\mathcal P} FP^\infty$.  There is a
bornological spectral sequence with $E^{p,q}_2 \isom \mathcal{P}H^p( G_3; \mathcal{P}H^q(G_1) )$ and which converges to $\mathcal{P}H^*(G)$, where $\btensor$ is the bornological completed projective tensor product.
\end{theorem}
(An ``extension of groups with word-length" means $L_1$ and $L_3$ are induced by the length function $L_2$.)

A similar result holds with more general bounding classes and coefficients.

\begin{theorem}[Serre Spectral Sequence in $\B$-bounded cohomology]\label{SSS} 
Let $(G_1,L_1)\rightarrowtail (G_2,L_2)\twoheadrightarrow (G_3,L_3)$ be an
extension of groups with word-length. Suppose that  $V$ is a metrizable (bornological) 
$\BGtwo$-module and $(G_3,L_3)$ is $\B$-SIC.  Then there exists a spectral sequence
\[
E_2^{p,q} = {\B}H^p(G_3; {\B}H^q( G_1; V ))\Rightarrow {\B}H^{p+q}(G_2;V)
\]
Hence if $(G_1,L_1)$ is $\B$-SIC, so is $(G_2,L_2)$.
\end{theorem}

\begin{proof} 
Let $(P_*, d_P)$ be the $\B$-completion of the homogeneous bar resolution of $G_2$ and $T_*$ be the tensor product of $P_*$ by $\C$ over $\BGone$, $T_q \isom \BGthree \btensor \BGtwo^{\btensor q}$.  By hypothesis, there exists a resolution $R_*$ for $\C$ over $\BGthree$, with each $R_p$ free with finite rank.

Let $C^{*,*}$ be the first quadrant double complex given by
\[C^{p,q} = \bHom_{\BGthree}( R_p \btensor T_q, V ) \isom \bHom_{\BGthree}(R_p, \bHom( T_q, V) ).\]
Filter this complex by rows.  For a fixed $q$ we have
\[ \ldots \stackrel{\delta_R}{\to} C^{*-1,q} \stackrel{\delta_R}{\to} 
	C^{*,q} \stackrel{\delta_R}{\to} C^{*+1,q} 
	\stackrel{\delta_R}{\to} \ldots \]

The bounded contraction for the complex $R_*$ induces a contraction on $C^{*,q}$,
so $E^{p,q}_1 = 0$ for $p \geq 1$ and $E^{0,q}_1 = \bHom_{\BGthree}( T_q , V ) \isom \bHom_{\BGtwo}(P_q, V)$.
The $E_2$-term is precisely ${\B}H^*( G_2; V )$, and the spectral sequence collapses here.

Filter $C^{*,*}$ by columns.  For a fixed $p$ we have
\[ \ldots \stackrel{\delta_T}{\to} C^{p,*-1} \stackrel{\delta_T}{\to} 
	C^{p,*} \stackrel{\delta_T}{\to} C^{p,*+1} 
	\stackrel{\delta_T}{\to} \ldots \]

By adjointness, $C^{p,q} \isom \bHom( \overline{R_p}, \bHom( T_q, V ) )$, where $\overline{R_p}$ is finite dimensional with 
$R_p \isom \BGthree \btensor \overline{R_p}$.  Let $d^*_T : \bHom(T_q, V ) \to \bHom(T_{q+1}, V )$ be the map induced by $d_T$.
It is clear that $\ker \delta_T = \bHom( \overline{R_p}, \ker d_T^* )$ and $\im \delta_T \subset \bHom( \overline{R_p}, \im d_T^* )$.
That $\bHom( \overline{R_p}, \im d_T^* ) \subset \im \delta_T$ follows from finite dimensionality of $\overline{R_p}$.

Finite dimensionality also implies 
\[ \bHom\left( \overline{R_p}, \frac{\ker d_T^*}{\im d_T^*} \right) \isom \frac{\bHom(\overline{R_p}, \ker d_T^*)}{\bHom( \overline{R_p}, \im d_T^*) }. \]

Thus this spectral sequence has $E_1^{p,q} \isom \bHom_{\BGthree}( R_p, {\B}H^q(H;V) )$ and $E_2^{p,q} \isom {\B}H^p( G_3; {\B}H^q( G_1; V) )$. By a spectral sequence comparison, if $(G_1,L_1)$  is $V$-$\B$IC, so is $(G_2,L_2)$.  Consequently if $(G_1,L_1)$ is $\B$-SIC, isocohomologicality holds for all $\BGtwo$-modules $V$, implying $(G_2, L_2)$ is $\B$-SIC by Theorem \ref{thm:geocrit}.

\end{proof}



\subsection{The spectral sequence associated to a group acting on a complex}\label{sect:CompSS}

Following Section 1.6 of \cite{S}, suppose that a finitely
generated group $G$ acts cocompactly on an acyclic simplicial complex $X$
without inversion. For a simplex $\sigma$ of $X$, denote the
stabilizer of $\sigma$ by $G_\sigma$. Denote by $\Sigma$ a set of
representatives of simplexes of $X$ modulo the $G$ action, and by
$\Sigma_q$ the $q$-dimensional representatives in $\Sigma$.

Let $C_*(X)$ denote the simplicial chain complex of $X$.  As $X$ is acyclic, there
is an exact sequence
\[  0 \leftarrow \C \leftarrow C_0(X) \leftarrow C_1(X) \leftarrow C_2(X) \leftarrow \ldots \]
There is a direct-sum decomposition $C_q(X) \isom \bigoplus_{\sigma \in \Sigma_q} \C[G/G_{\sigma}]$.
For each $\sigma \in \Sigma$, let $P^{\sigma}_k = \C[  G \times_{G_\sigma} (G_\sigma)^{k+1}]$, the usual
simplicial structure on $G_\sigma$ induced up to a $\C[G]$-module.  In this way,
$P^\sigma_\bullet$ is a projective $\C[G]$ resolution of $\C[G/G_{\sigma}]$.  This yields a double complex


\begin{equation}\label{dia:P}
\xymatrix{
	\vdots \ar[d] & \vdots \ar[d]& \vdots\ar[d]  & \\
	\displaystyle\bigoplus_{\sigma \in \Sigma_0} P^\sigma_2 \ar[d] & \displaystyle\bigoplus_{\sigma \in \Sigma_1} P^\sigma_2 \ar[l] \ar[d] & 
		\displaystyle\bigoplus_{\sigma \in \Sigma_2} P^\sigma_2 \ar[l] \ar[d] & \hdots \ar[l]\\
	\displaystyle\bigoplus_{\sigma \in \Sigma_0} P^\sigma_1 \ar[d] & \displaystyle\bigoplus_{\sigma \in \Sigma_1} P^\sigma_1 \ar[l] \ar[d] & 
		\displaystyle\bigoplus_{\sigma \in \Sigma_2} P^\sigma_1 \ar[l] \ar[d] & \hdots \ar[l]\\
	\displaystyle\bigoplus_{\sigma \in \Sigma_0} P^\sigma_0 & \displaystyle\bigoplus_{\sigma \in \Sigma_1} P^\sigma_0 \ar[l] & 
		\displaystyle\bigoplus_{\sigma \in \Sigma_2} P^\sigma_0 \ar[l] & \hdots \ar[l]
}
\end{equation}

As each $\Sigma_q$ is finite, applying $\Hom_{\C[G]}( \cdot, M )$ yields the following double complex.


\begin{equation}\label{dia:Hom}
\xymatrix{
\vdots  & \vdots  & \vdots  & \\
\displaystyle\bigoplus_{\sigma \in \Sigma_0} \Hom_{\C[G]}( P^\sigma_2, M) \ar[r] \ar[u] & 
	\displaystyle\bigoplus_{\sigma \in \Sigma_1} \Hom_{\C[G]}( P^\sigma_2, M) \ar[r] \ar[u] & 
	\displaystyle\bigoplus_{\sigma \in \Sigma_2} \Hom_{\C[G]}( P^\sigma_2, M) \ar[r] \ar[u] & \hdots \\
\displaystyle\bigoplus_{\sigma \in \Sigma_0} \Hom_{\C[G]}( P^\sigma_1, M) \ar[r] \ar[u] & 
	\displaystyle\bigoplus_{\sigma \in \Sigma_1} \Hom_{\C[G]}( P^\sigma_1, M) \ar[r] \ar[u] &
	\displaystyle\bigoplus_{\sigma \in \Sigma_2} \Hom_{\C[G]}( P^\sigma_1, M) \ar[r] \ar[u] & \hdots \\
\displaystyle\bigoplus_{\sigma \in \Sigma_0} \Hom_{\C[G]}( P^\sigma_0, M) \ar[r] \ar[u] & 
	\displaystyle\bigoplus_{\sigma \in \Sigma_1} \Hom_{\C[G]}( P^\sigma_0, M) \ar[r] \ar[u] &
	\displaystyle\bigoplus_{\sigma \in \Sigma_2} \Hom_{\C[G]}( P^\sigma_0, M) \ar[r] \ar[u] & \hdots
}
\end{equation}

Consider the spectral sequence arising from filtering this double complex by columns.
That $P^\sigma_*$ be a projective $\C[G]$ resolution of 
$\C[G/G_{\sigma}]$ means that $\bigoplus_{\sigma \in \Sigma_0} P^\sigma_\bullet$
is a projective resolution of $\bigoplus_{\sigma \in \Sigma_0} \C[G/G_\sigma]$.
The $E^{p,q}_1$-term of this spectral sequence is then 
\begin{eqnarray*}
\Ext^q_{\C[G]}\left( \bigoplus_{\sigma\in\Sigma_p} \C[G/G_\sigma], M \right) & \isom & 
	\prod_{\sigma\in\Sigma_p} \Ext^q_{\C[G]} \left( \C[G/G_\sigma], M \right) \\
 & \isom & \prod_{\sigma\in\Sigma_p} \Ext^q_{\C[G_\sigma]} \left( \C, M \right) \\
 & \isom & \prod_{\sigma\in\Sigma_p} H^q\left(G_\sigma; M \right) \\
\end{eqnarray*}

On the other hand, the total complex of the double complex in equation \ref{dia:P},
serves as a projective resolution of $\C$ over $\C[G]$, yielding a theorem
of Serre.

\begin{theorem}[Serre]\label{thm:serrecomplexss}
For each $\mathbb{C}G$-module $M$, there is a spectral sequence with 
$E^{p,q}_1 \isom \prod_{\sigma\in\Sigma_p} H^q\left(G_\sigma; M \right)$ and which converges to $H^{p+q}(G;M)$.
\end{theorem}

This extends to the $\B$-bounded case, when the stabilizers are given the length 
function restricted from $G$.

Let $C^{\B}_m(X)$ be defined as in the proof of Theorem \ref{thm:geocrit}.
If the higher weighted Dehn functions of $X$ are $\B$-bounded, $C^{\B}_*(X)$
gives a chain complex of complete bornological $\mathcal{H}_{\B,L}(G)$-modules,
endowed with a bounded $\C$-linear contracting homotopy.  There is a natural quotient length, $w$,
defined on $G/G_\sigma$ induced from the length $L$ on $G$ via 
$w( gG_\sigma ) := \min \{ L( gh ) \, | \, h \in G_\sigma \}$.  Denote by $\mathcal{H}_{\B,w}(G/G_\sigma)$ 
the completion $\C[G/G_\sigma]$ under the following family of seminorms.
\[ 
| \sum_{x \in G/G_\sigma} \alpha_x x |_\lambda := \sum_{x\in G/G_\sigma} |\alpha_x| \lambda( w(x) ) \,\,\, \lambda \in \B
\]
There is a bornological isomorphism $C^{\B}_q(X) \isom \bigoplus_{\sigma \in \Sigma_q}\mathcal{H}_{\B,w}(G/G_\sigma)$.
Similarly, let $\B P^{\sigma}_k$ denote the corresponding completion of $P^{\sigma}_k$.  As above, we obtain a
double complex, but of bornological $\BG$-modules.


\begin{equation}\label{dia:bP}\xymatrix{
\vdots \ar[d] & \vdots \ar[d] & \vdots \ar[d] &  \\
\displaystyle\bigoplus_{\sigma \in \Sigma_0} \B P^\sigma_2 \ar[d] & \displaystyle\bigoplus_{\sigma \in \Sigma_1} \B P^\sigma_2 \ar[l] \ar[d] &
	\displaystyle\bigoplus_{\sigma \in \Sigma_2} \B P^\sigma_2 \ar[l] \ar[d] & \hdots \ar[l] \\
\displaystyle\bigoplus_{\sigma \in \Sigma_0} \B P^\sigma_1 \ar[d] & \displaystyle\bigoplus_{\sigma \in \Sigma_1} \B P^\sigma_1 \ar[l] \ar[d] &
	\displaystyle\bigoplus_{\sigma \in \Sigma_2} \B P^\sigma_1 \ar[l] \ar[d] & \hdots \ar[l]\\
\displaystyle\bigoplus_{\sigma \in \Sigma_0} \B P^\sigma_0 & \displaystyle\bigoplus_{\sigma \in \Sigma_1} \B P^\sigma_0 \ar[l] &
	\displaystyle\bigoplus_{\sigma \in \Sigma_2} \B P^\sigma_0 \ar[l] & \hdots \ar[l]
}\end{equation}

For any $\BG$-module $M$, applying the bounded equivariant homomorphism functor
$\bHom_{\BG}( \cdot, M )$ yields the following.

\begin{equation}\label{dia:bHom}\xymatrix{
\vdots & \vdots &  \\
\displaystyle\bigoplus_{\sigma \in \Sigma_0} \bHom_{\BG}( \B P^\sigma_2, M) \ar[r] \ar[u] &
	\displaystyle\bigoplus_{\sigma \in \Sigma_1} \bHom_{\BG}( \B P^\sigma_2, M) \ar[r] \ar[u] & \hdots\\
\displaystyle\bigoplus_{\sigma \in \Sigma_0} \bHom_{\BG}( \B P^\sigma_1, M) \ar[r] \ar[u] & 
	\displaystyle\bigoplus_{\sigma \in \Sigma_1} \bHom_{\BG}( \B P^\sigma_1, M) \ar[r] \ar[u] & \hdots\\
\displaystyle\bigoplus_{\sigma \in \Sigma_0} \bHom_{\BG}( \B P^\sigma_0, M) \ar[r] \ar[u] & 
	\displaystyle\bigoplus_{\sigma \in \Sigma_1} \bHom_{\BG}( \B P^\sigma_0, M) \ar[r] \ar[u] & \hdots
}\end{equation}

As in the non-bornological case above, when filtering by columns
we obtain a spectral sequence that converges to the cohomology of the total complex.
The choice of $w$ on $G/G_\sigma$ ensures a bornological isomorphism 
\[Ext^*_{\BG}( \mathcal{H}_{\B,w}(G/G_\sigma), M ) \isom
Ext^*_{\mathcal{H}_{\B,L}(G_\sigma)}( \C, M ).\]  As above we find 
$E^{p,q}_1 \isom \prod_{\sigma\in\Sigma_p} \B H^q\left(G_\sigma; M \right)$.
Moreover, the total complex of \ref{dia:bP} gives a projective resolution of $\C$
over $\BG$.  This verifies the following theorem.

\begin{theorem}
Suppose all higher weighted Dehn functions of $C_*(X)$ are $\B$-bounded, when the acyclic complex $X$ is equipped with the $1$-skeleton 
weighting.  For each $\BG$-module $M$, there is a spectral sequence with $E_1$-term the product of the 
${\B}H^*( G_\sigma; M)$ which converges to ${\B}H^*(G;M)$.
\end{theorem}

By comparison with the spectral sequence from Theorem \ref{thm:serrecomplexss},
we immediately obtain the following corollary.
\begin{corollary}\label{cor:ComplexSS}
Suppose the acyclic complex $X$ is equipped with the $1$-skeleton weighting, and all higher weighted Dehn functions of $C_*(X)$ are $\B$-bounded.  
If $M$ is a $\BG$-module for which each $(G_\sigma, L)$ is $M$-$\B$IC, then $(G,L)$ is $M$-$\B$IC.
In particular if each $(G_\sigma,L)$ is $\B$-SIC, so is $(G,L)$.
\end{corollary}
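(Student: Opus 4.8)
The plan is to deduce this by comparing the $\B$-bounded spectral sequence just constructed with Serre's spectral sequence of Theorem~\ref{thm:serrecomplexss}. Both arise by applying $\Hom(-,M)$ to the double complex assembled from the $\C[G]$-resolutions $P^\sigma_*$ of $\C[G/G_\sigma]$ (respectively their completions $\B P^\sigma_*$) and passing to the spectral sequence of the columnwise filtration on the associated total complex. First I would observe that the inclusions $P^\sigma_k\hookrightarrow\B P^\sigma_k$ are equivariant and commute with every double-complex differential, so restriction along them (and forgetting the bornology) is a morphism of double complexes $\{\bHom_{\BG}(\B P^\sigma_k,M)\}\to\{\Hom_{\C[G]}(P^\sigma_k,M)\}$, hence a morphism from the $\B$-bounded spectral sequence to the ordinary one. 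Since the total complex of $\{\B P^\sigma_*\}$ is a bornologically projective resolution of $\C$ over $\BG$ and that of $\{P^\sigma_*\}$ is a projective resolution of $\C$ over $\C[G]$, on abutments this morphism is, up to the usual identifications, the comparison map $\B H^*(G;M)\to H^*(G;M)$.

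Next I would identify the morphism on $E_1$. In column $p$ it is the natural map
\[
\prod_{\sigma\in\Sigma_p}\Ext^q_{\BG}(\mathcal{H}_{\B,w}(G/G_\sigma),M)\longrightarrow\prod_{\sigma\in\Sigma_p}\Ext^q_{\C[G]}(\C[G/G_\sigma],M),
\]
which, via the bornological Shapiro isomorphism of Lemma~\ref{shap} on the left and the classical Shapiro isomorphism on the right, is identified with the product over $\sigma\in\Sigma_p$ of the comparison maps $\B H^q(G_\sigma;M)\to H^q(G_\sigma;M)$, each $G_\sigma$ carrying the length function restricted from $G$. By hypothesis each $(G_\sigma,L)$ is $M$-$\B$IC, so each of these is an isomorphism; hence the morphism of spectral sequences is an isomorphism on $E_1$, and therefore on every later page and on $E_\infty$. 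Both spectral sequences converge, and in total degree $n$ only the columns $0\le p\le n$ contribute, so the filtrations on $\B H^n(G;M)$ and $H^n(G;M)$ are finite; a step-by-step five-lemma argument then upgrades the isomorphism on $E_\infty$ to an isomorphism on abutments, i.e.\ $(G,L)$ is $M$-$\B$IC. For the final assertion, given $\B$-SIC vertex stabilizers and an arbitrary bornological $\BG$-module $M$, I would note that $\C[G_\sigma]\hookrightarrow\C[G]$ preserves every $\B$-seminorm on the nose (since $L$ restricted to $G_\sigma$ is just $L$), hence extends to a bounded homomorphism $\mathcal{H}_{\B,L}(G_\sigma)\hookrightarrow\BG$; restriction of scalars along it makes $M$ a bornological $\mathcal{H}_{\B,L}(G_\sigma)$-module, so each $(G_\sigma,L)$ is $M$-$\B$IC, and the first part yields that $(G,L)$ is $M$-$\B$IC for all such $M$, i.e.\ $\B$-SIC.

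The one place that genuinely needs care — everything else being the standard comparison theorem for convergent first-quadrant-type spectral sequences — is the $E_1$-level compatibility: one must verify that the square relating the identification $E_1^{p,q}\isom\prod_\sigma\B H^q(G_\sigma;M)$, its classical counterpart, and the morphism of spectral sequences actually commutes. This comes down to the statement that the bornological Shapiro isomorphism of Lemma~\ref{shap} and the classical one are intertwined by the comparison homomorphism, which in turn follows from naturality of the change-of-rings adjunctions $\bHom_{\BG}(\BG\ttensor-,M)\isom\bHom(-,M)$ and $\Hom_{\C[G]}(\C[G]\tensor-,M)\isom\Hom(-,M)$ used to prove the two Shapiro lemmas.
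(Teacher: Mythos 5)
Your proof is correct and follows exactly the route the paper takes: the paper's own argument is the one-line remark ``By comparison with the spectral sequence from Theorem~\ref{thm:serrecomplexss}, we immediately obtain the following corollary,'' and your proposal is precisely that comparison carried out in full, including the correct identification of the $E_1$-level map as a product of Shapiro-identified comparison maps for the stabilizers and the restriction-of-scalars observation needed for the final $\B$-SIC assertion.
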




\section{Duality groups and the comparison map}

\subsection{Duality and Poincar\'e Duality Groups}

We recall that $G$ is a duality group of dimension\ $n$ if there exists a $G$-module $D$ such that
\[
H^i(G,M) \isom H_{n-i}(G,D\tensor M)
\]
If this is the case, then $D = H^n(G,\Z [G])$ is the \underline{dualizing module}. When $D = \Z $, 
the group is called a \underline{Poicar\'e Duality group}. It is orientable precisely when the action of $G$ on $D$ 
(induced by the right action of $G$ on $\Z [G]$) is trivial. All known orientable Poincar\'e Duality groups 
occur as the fundamental group of a closed orientable aspherical manifold.


\subsection{Isocohomologicality and the fundamental class}

The question of isocohomologicality for oriented duality groups is answered by the following theorem. All 
homology and cohomology groups are taken with coefficients in $\mathbb{C}$.

\begin{theorem}\label{thm:manifold} 
Let $M$ be a compact, closed, orientable manifold of dim.\ $n$ which is 
aspherical ($\widetilde{M}\homotopic *$). Let $G = \pi_1(M)$, and let $\mu''_G\in H^n(M\times M)$ denote the 
fundamental cohomology class in $H^*(M\times M)$ dual to the diagonal embedding $\Delta(M)\subset M\times M$
\footnote{This class is simply the image, under the restriction map $H^*(M\times M,M\times M - \Delta(M))\to H^*(M\times M)$,  
of the Thom class associated to the normal bundle of the diagonal embedding.}. If $\mu''_G$ is in the image of the comparison 
map $\Phi^*_{\B}:{\B}H^*(G\times G)\to H^*(G\times G)$ with respect to a length function $L$ on $G$, then 
$(G,L)$ is $\B$-isocohomological.
\end{theorem}

\begin{proof}
Assume $L$ fixed, and consider the following diagram:

\centerline{\xymatrix{
H^i(G)\ar@<1ex>[r]^{{}_-\cap \mu_G} & H_{n-i}(G)\ar[d]^{\Phi_*^{\B}}\ar@<1ex>[l]^{\mu_G''/_-}\\
{\B}H^i(G)\ar[u]^{\Phi^*_{\B}}\ar@<1ex>[r]^{{}_-\cap \mu^{\B}_G} & {\B}H_{n-i}(G)\ar@<1ex>@{-->}[l]^{?}}}

Here $\mu_G\in H_n(G) = H_n(M)$ denotes the fundamental homology class of $M$. Now ${}_-\cap \mu_G$ is an 
isomorphism with inverse given by $\mu''_G/_-$. The homology class $\mu^{\B}_G\in {\B}H_n(G)$ 
is simply the image of $\mu_G\in H_n(G)$ under the comparison map $\Psi_*^{\B}$. By section \ref{sect:pairings}, 
\[
{}_-\cap\mu^{\B}_G = \Phi^{\B}_*\circ ({}_-\cap\mu_G)\circ \Phi^*_{\B}
\] 
In fact this identify follows from a similar one that holds on the (co)chain level. If there exists a class 
${}_{\B}\mu_G''\in {\B}H^n(G\times G)$ satisfying $\mu_G'' = \Psi_{\B}^*({}_{\B}\mu_G'')$, 
then taking $? = {}_{\B}\mu_G''/_-$ in the above diagram and appealing again to section \ref{sect:HSSS}, we get the second identity
\[
\mu_G''/_- = \Phi^*_{\B}\circ ({}_{\B}\mu_G''/_-)\circ \Phi^{\B}_*
\]
This implies the diagram, with ``?'' so defined, is commutative. The fact that the maps at the top are 
isomorphisms then implies all of the other maps in the diagram are as well.
\end{proof}

[Note: There is a different way of thinking about this result. By the Duality Theorem (Thm.\ 11.10 of \cite{MS}), 
for any basis $\{b_i\}$ of $H^*(G) = H^*(M)$, taken as a (finite-dimensional) graded vector space over $\C$, 
there exists a ``dual" basis $\{b_j^{\sharp}\}$ with $<b_i\cup b_j^{\sharp},\mu_G> = \delta_{ij}$. In terms of 
these bases, $\mu''_G$ is given by the equation
\[
\mu''_G = \sum_i(-1)^{dim(b_i)}b_i\times b_i^{\sharp}
\]
The condition that this class is $\B$-bounded then forces each of the $b_i$'s (and hence also the $b_j^{\sharp}$'s) to be $\B$-bounded, via linear independence.]

When $BG$ has the homotopy type of an oriented manifold with boundary, we have a similar result.

\begin{theorem} 
Suppose $(G,L)$ is a group with word-length, such that $BG\homotopic M$ an oriented compact $n$-dimensional 
manifold with connected boundary $\partial M$. Assume also that $\partial M$ is aspherical, and incompressibly 
embedded in $M$ (i.e., the induced map on fundamental groups $\pi_1(\partial M)\to \pi_1(M)$ is injective). Let 
$D(M) = M\underset{\partial(M)}\cup M$ denote the double of $M$ along its boundary. If the fundamental cohomology 
classes of both $D(M)$ and  $\partial M$ are both in the image of the comparison map for a bounding class 
$\B$ (in the manner described by theorem \ref{thm:manifold}), then $G$ is $\B$-isocohomological.
\end{theorem}

\begin{proof} 
Let $G_i' = \pi_1(\partial M)$ and $G'' = \pi_1(D(M))$. By Van Kampen's theorem, $G'' \isom G\underset{G'}* G$; 
moreover, the incompressibility of $\partial M$ in $M$ implies $D(M)\homotopic K(G'',1)$ is aspherical. Now consider the diagram
\begin{center}
\centerline{\small{
\xymatrix{
\dots \ar[r] & {\B}H^{j-1}(G')\ar[r]^{\delta}\ar[d]
& {\B}H^{j}(G'')\ar[r]\ar[d] & {\B}H^j(G)\oplus {\B}H^j(G)\ar[r]\ar[d] & {\B}H^j(G')\ar[r]\ar[d] &\dots\\
\dots \ar[r] & H^{j-1}(G')\ar[r]^{\delta}
& H^{j}(G'')\ar[r] & H^j(G)\oplus H^j(G)\ar[r] & H^j(G')\ar[r]^{\delta} &\dots  
}}
}
\end{center}

Both the top and bottom sequences are derived from the collapsing of the spectral sequence associated to a 
group acting on a complex (in this case, a tree with two edges and three vertices, representing the amalgamated 
free product). The vertical maps are induced by the comparison transformation ${\B}H^*(_-)\to H^*(_-)$, implying 
the diagram is commutative. By Theorem \ref{thm:manifold}, the comparison map is an isomorphism for both $G'$ and 
$G''$ (both of whose classifying spaces are represented by compact, oriented finite-dimensional manifolds without 
boundary). The result follows by the five-lemma.
\end{proof}

It is not clear if this is the best possible result when the boundary is non-empty, i.e., whether $\B$-isocohomologicality 
for $G$ could be guaranteed by the $\B$-boundedness of a single cohomology class. It is also not clear what one can say 
in general if either $\partial M$ is not aspherical, or if it is, but not incompressibly embedded in $M$. All of these 
situations would seem to deserve further attention.


\subsection{$\B$-duality groups}

Using the pairing operations of section \ref{sect:pairings}, one has an obvious extension of the definition of a duality group to the $\B$-bounded setting.

\begin{definition} 
Given a bounding class $\B$ and a group with word-length $(G,L)$, we say that $G$ is a 
\underline{$\B$-duality group of dimension $n$} if there exists an $\BG$-module 
$D_{\B}$ and a ``fundamental class" $\mu_{\B}\in {\B}H_n(G,D_{\B})$ with
\begin{equation*}
{\B}H^i(G,V)\overset{_-\cap\mu^{\B}}{\underset{\isom}\longrightarrow} {\B}H_{n-i}(G,D\widehat{\tensor} V)
\end{equation*}
for all $\BG$ modules $V$.
\end{definition}

\begin{theorem} 
Let $\B$ be a bounding class, and $(G,L)$ a $\B$-duality group with duality module $D_{\B}$. 
Suppose $\mu^{\B}$ is in the image of the comparison map $\Psi_*^{\B}$.  Then
\begin{itemize}
\item If $D_{\B}$ is finite-dimensional over $\C$, $(G,L)$ is strongly monocohomological 
	(that is, the comparison map is injective in cohomology for all bornological $\BG$-modules $V$.
\item If $D_{\B}$ is infinite-dimensional over $\C$, $(G,L)$ is monocohomological for all bornological 
	$H_{{\B},L}(G)$-modules $V$ which are finite-dimensional over $\C$.
\end{itemize}
\end{theorem}

\begin{proof} 
Choose $\mu_D\in H_n(G,D)$ with $\Phi_n^{\B}(\mu_D) = \mu^{\B}_D$.  We can consider a diagram analogous to that of Theorem (\ref{thm:manifold}):

\centerline{\xymatrix{
H^i(G,V)\ar@<1ex>[r]^(.4){{}_-\cap \mu_D} & H_{n-i}(G,D_{\B}\tensor V)\ar[d]^{\Phi_*^{\B}}\\
{\B}H^i(G,V)\ar[u]^{\Phi^*_{\B}}\ar@<1ex>[r]^(.4){{}_-\cap \mu^{\B}_D} & {\B}H_{n-i}(G,D_{\B}\widehat{\tensor} V)}}

At issue in this diagram is the difference between $D_{\B}\tensor V$ and $D_{\B}\widehat{\tensor} V$. However, if either $D_{\B}$ or $V$ is finite-dimensional over $\C$, this difference vanishes and the diagram commutes, verifying injectivity of the comparison map in the cases indicated. Note that we do not assume the top horizontal map in the above diagram is an isomorphism.
\end{proof}

{\bf\underline{Remark}} Ideally, one would like to prove the diagram commutes whenever $\mu^{\B}$ is in the image of the comparison map. However, we have not yet been able to show this.


\subsection{Two solvmanifolds}

We construct examples of groups $\pi$, admitting closed oriented compact manifold models for $B\pi$ 
of small dimension, for which the comparison map fails to be surjective.

Let $L_n$ denote the standard word-length function on $\Z^n$, and set
\[
LWL_n(g) = \log(1 + L_n(g))
\]
This is still a length function on $\Z^n$, but it is not $\B$-equivalent to $L_n$ unless $\mathcal{E} \preceq \mathcal{B}$.

\begin{proposition}\label{prop:log} 
Let ${\B}H_{log}^*(\Z^n)$ denote the $\B$-bounded 
cohomology of the group with word-length $(\Z^n,LWL_n)$. Then for all ${\B}\prec {{\mathcal{E}}}$,
\begin{equation*}
0 = \Phi^*_{\B}:{\B}H^*_{log}(\Z^n)\to H^*(\Z^n),\quad *>0
\end{equation*}
\end{proposition}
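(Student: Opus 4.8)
The plan is to prove the stronger statement that in every degree $k\geq 1$ no cocycle representing a nonzero class of $H^k(\Z^n)=H^k(\Z^n;\C)$ can be $\B$-bounded with respect to the weight induced by $LWL_n$ on the bar complex of $\Z^n$; since the comparison map sends the $\B H^*$-class of a $\B$-bounded cocycle to its ordinary cohomology class, this immediately gives $\Phi^*_\B=0$ in positive degrees. Throughout one uses that the weight induced by $LWL_n$ on a bar $k$-simplex $[h_1|\cdots|h_k]$ of $B\Z^n$ is, up to the degreewise $\mathcal{L}$-equivalence that does not affect $\B$-boundedness, $w([h_1|\cdots|h_k])=\sum_{i=1}^k\log\bigl(1+L_n(h_i)\bigr)$.

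Fix $k\geq 1$ and a nonzero class $[c]\in H^k(\Z^n;\C)\isom\Lambda^k(\C^n)$. Because the Kronecker pairing $H^k(\Z^n)\otimes H_k(\Z^n)\to\C$ of Section~\ref{sect:pairings} is perfect and $H_k(\Z^n;\C)\isom\Lambda^k(\C^n)$ has the Pontryagin-monomial basis $\{[e_{j_1}]\cdots[e_{j_k}]\}_{j_1<\cdots<j_k}$, there is a coordinate subgroup $A=\langle g_{j_1},\dots,g_{j_k}\rangle\cong\Z^k$ of $\Z^n$ such that, after restricting along $\iota\colon A\hookrightarrow\Z^n$ (which is isometric for both $L_n$ and $LWL_n$), the class $[c]$ pulls back to a nonzero multiple $\lambda$ of the top class of $H^k(A;\C)\isom\C$. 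Since $\Phi^*_\B$ is natural, it suffices to derive a contradiction from the assumption that some $\B$-bounded cocycle $\tilde c$ on $BA$ (for the length $LWL_k$) represents $\lambda$ times the top class; fix $f\in\B$ with $|\tilde c(\sigma)|\leq f(w(\sigma))$ for all bar simplices $\sigma$.

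The heart of the argument is to produce, for each integer $m\geq 1$, a cycle $z_m\in C_k(BA;\C)$ representing the fundamental class $[\mu]\in H_k(A;\C)$ but supported on simplices of weight roughly $k\log(1+m)$ and with $\ell^1$-norm of order $m^{-k}$. To this end I would fix once and for all a single finite-support cycle $\omega\in C_k(BA;\C)$ with $[\omega]=[\mu]$, let $\phi_m\colon A\to A$ be multiplication by $m$, and set $z_m:=m^{-k}(\phi_m)_\#(\omega)$. Since $\phi_m$ realizes the degree-$m^k$ self-map of the torus $T^k=BA$, one has $(\phi_m)_*[\mu]=m^k[\mu]$, hence $[z_m]=[\mu]$; since $\phi_m$ acts on bar simplices by $[h_1|\cdots|h_k]\mapsto[mh_1|\cdots|mh_k]$ and $L_k(mh)=mL_k(h)$, every simplex occurring in $z_m$ has weight $\sum_{i}\log(1+mL_k(h_i))\leq k\log(1+m)+W_0$, where $W_0:=\max\{w(\sigma):\sigma\in\operatorname{supp}\omega\}$ depends only on $\omega$; and $\|z_m\|_{\ell^1}=m^{-k}\|\omega\|_{\ell^1}=:Cm^{-k}$. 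Pairing on the chain level, and using that $\langle\tilde c,z_m\rangle$ depends only on (co)homology classes, we get
\[
|\lambda|=\bigl|\langle\tilde c,z_m\rangle\bigr|\leq\|z_m\|_{\ell^1}\sup_{\sigma\in\operatorname{supp}z_m}f(w(\sigma))\leq\frac{C}{m^k}\,f\bigl(k\log(1+m)+W_0\bigr)\qquad\text{for every }m\geq 1.
\]

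Finally I would check that the right-hand side has infimum $0$ over $m$, forcing $\lambda=0$ and so a contradiction. The key elementary fact is that $\B\prec\mathcal{E}$ means $\mathcal{E}\not\preceq\B$, and — because bounding classes are closed under reparametrization of the argument by elements of $\mathcal{L}$ — this forces that no $f\in\B$ dominates an exponential $e^{bu}$ with $b>0$: if $f(u)\geq a e^{bu}$ for all large $u$, a suitable element of $\mathcal{L}$ reparametrizes $f$ into an element of $\B$ dominating $e^u$, whence $\mathcal{E}\preceq\B$. Writing $u_m:=k\log(1+m)+W_0$, the consecutive gaps $u_{m+1}-u_m=k\log(1+\tfrac1{m+1})$ tend to $0$, so monotonicity of $f$ shows that $\liminf_m e^{-u_m}f(u_m)>0$ would force $f(u)\geq c\,e^{u-1}$ for all large $u$, contradicting the previous sentence; hence $\liminf_m e^{-u_m}f(u_m)=0$. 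Since $m^k\geq 2^{-k}e^{-W_0}e^{u_m}$ once $m$ is large, along a subsequence realizing this $\liminf$ the displayed bound tends to $0$, so $\lambda=0$. The only genuinely delicate point is the construction of the scaled cycles $z_m$ together with the verification $[z_m]=[\mu]$ via the degree computation for the multiplication-by-$m$ endomorphism of $T^k$; the analytic estimate and the reductions above are routine.
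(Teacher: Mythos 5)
Your proposal is correct, and it takes a genuinely different route than the paper. The paper invokes the Hochschild--Serre spectral sequence (constructed in Section~\ref{sect:HSSS}) applied inductively to the split extensions $\Z^{n-1}\rightarrowtail\Z^n\twoheadrightarrow\Z$, to obtain a tensor decomposition $\B H^*_{log}(\Z^n)\isom\bigotimes^n\B H^*_{log}(\Z)$; it then kills everything in positive degree by noting that a degree-one $\B$-bounded cocycle on $(\Z,LWL_1)$ would be a homomorphism $\Z\to\C$ whose norm grows exponentially in $LWL_1$, impossible when $\B\prec\mathcal{E}$. In contrast, you argue directly in degree $k$ without any spectral sequence: restrict along an isometric coordinate $\Z^k\hookrightarrow\Z^n$ (picking a nonzero monomial coefficient of $[c]$), then test a hypothetical $\B$-bounded representative against the explicit cycles $z_m=m^{-k}(\phi_m)_\#\omega$, where $\phi_m$ is multiplication by $m$. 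The degree computation $(\phi_m)_*[\mu]=m^k[\mu]$ guarantees $[z_m]=[\mu]$, the logarithmic weight function compresses $w(m\sigma)$ into $k\log(1+m)+O(1)$, and $\|z_m\|_{\ell^1}=Cm^{-k}$, yielding $|\lambda|\leq Cm^{-k}f(k\log(1+m)+W_0)$; the elementary estimate on bounding classes then forces $\lambda=0$. What the paper's route buys is a stronger conclusion, namely $\B H^k_{log}(\Z^n)=0$ for $k>0$ (the tensor decomposition kills the whole group, not just the image of the comparison map), but at the cost of relying on machinery established elsewhere in the paper. What your route buys is self-containment and a concrete geometric mechanism (the scaled cycles) explaining why exponential distortion obstructs $\B$-boundedness in every positive degree at once; it shows the image vanishes, which is exactly the statement of the proposition. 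Both arguments use essentially the same analytic fact in the final step --- that $\B\prec\mathcal{E}$ precludes any $f\in\B$ from eventually dominating $ae^{bu}$, which you verify carefully via $(\B C2)$ and $(\B C3)$.

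One very minor point: your bound $w([mh_1|\cdots|mh_k])\leq k\log(1+m)+W_0$ uses $\log(1+mL)\leq\log(1+m)+\log(1+L)$, which is correct; and the claim $\|(\phi_m)_\#\omega\|_{\ell^1}=\|\omega\|_{\ell^1}$ relies on injectivity of multiplication by $m$ on $\Z^k$, which you should note sends distinct nondegenerate bar simplices to distinct nondegenerate bar simplices. These are small gaps easily filled; the argument as a whole is sound.
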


\begin{proof} 
When $*=1$, elements of ${\B}H_{log}^1(\Z^m)$ correspond bijectively to group 
homomorphisms from $\Z^m $ to $\C$, equipped with its usual norm. The norm of any non-zero 
homomorphism grows linearly with respect to the standard word-length function on $\Z $. This means 
it grows exponentially as a function of $LWL_m$. When ${\B}\prec {\mathcal{E}}$, this is impossible, 
implying ${\B}H_{log}^1(\Z^m) = 0$ for all $m\ge 1$. This verifies the proposition in the case $n=1$.

Suppose now that $n > 1$. There is a commuting diagram of short-exact sequences of groups with word-length

\begin{center}
\centerline{
\xymatrix
{
\mathbb Z_{LWL_{n-1}}^{n-1}\ar[r]\ar[d] & \mathbb Z_{LWL_{n-1}}^{n-1}\times \mathbb Z_{st}\ar[r]\ar[d]& \mathbb Z_{st}\ar[d]\\
\mathbb Z_{st}^{n-1}\ar[r] &\mathbb Z_{st}^n\ar[r] &\mathbb Z_{st}
}}
\end{center}

By induction, we may assume the comparison map ${\B}H^*_{log}(\Z^{n-1})\to H^*(\Z^{n-1})$ is zero for $*>0$. Both sequences satisfy the conditions for the Serre spectral sequence in $\B$-bounded cohomology to exist. A spectral sequence argument then shows the vertical map in the middle $\mathbb Z_{LWL_{n-1}}^{n-1}\times \mathbb Z_{st}\to \mathbb Z^n_{st}$ must be zero in $\B$-bounded cohomology for $* > 1$, implying the same for the composite map
\[
\mathbb Z^n_{LWL_n}\to \mathbb Z^{n-1}_{LWL_{n-1}}\times \mathbb Z_{LWL_1}\to \mathbb Z^{n-1}_{LWL_{n-1}}\times \mathbb Z_{st}\to \mathbb Z^n_{st}
\]
Moreover, for the standard word-length the comparison map induces an isomorphism ${\B}H^*_{st}(\Z^n)\overset\cong{\to} H^*(\Z^n)$. We may then conclude that the comparison map ${\B}H^*_{log}(\Z^n)\to H^*(\Z^n)$ is zero for $* > 1$. As we have already shown it is zero for $*=1$, this completes the induction step.
\end{proof}

\underline{\bf Example 1 } [Gromov] As above, assume ${{\mathcal{L}}}\preceq{\B}\prec {\mathcal{E}}$ and let $G$ be the semi-direct product
$\Z^2\rtimes \Z $, where $\Z $ act on $\Z^2$ by the representation
\[
\begin{pmatrix}
2 & 1\\
1 & 1
\end{pmatrix}
\]
This is a split-extension of $\Z $ by $\Z^2$; moreover, $\Z^2$ has 
exponential distortion in $G$. This is equivalent to saying the induced word-length function on 
$\Z^2$ coming from the embedding in $G$ is (linearly) equivalent to $LWL_2$. For the base 
group $\Z $, the word-length function induced by the projection $G\twoheadrightarrow \Z $ 
is the standard one. Now the Hochschild-Serre spectral sequence in ordinary cohomology for this extension 
satisfies $E_2^{**} = E_{\infty}^{**}$ for dimensional reasons. Embedding $\Z^2\rtimes \Z $ 
in the solvable Lie group $\R^2\rtimes\R$, the action of the base on the fiber 
(over $\R$) is similar to the action given by $r\circ (r_1,r_2) = (e^{\lambda r}r_1,e^{-\lambda r}r_2)$. 
The first exterior power of this representation has no invariant subspaces, while the second exterior power 
is the identity. Hence $E_2^{0,1} = H^0(\Z ;H^1(\Z^2)) = 0$, while  
$E_2^{0,2} = H^0(\Z ;H^2(\Z^2))\isom \C$. On classifying spaces the short-exact 
sequence $\Z^2\rightarrowtail G\twoheadrightarrow \Z $ produces a fibration sequence of 
closed oriented manifolds and orientation-preserving maps. This yields a Poincar\'e Duality map on the 
$E_2^{**}$-term of the spectral sequence for $H^*(G)$. By this duality, we conclude $E_2^{0,1}$ is dual to 
$E_2^{1,1}$ which therefore must also be zero (we already knew 
$H^1(\mathbb{Z}) = E_2^{1,0} \isom E_2^{0,2}= H^0(\Z ;H^2(\Z^2))\isom \C$). This 
gives an isomorphism $H^*(G)\isom H^*(\mathbb{Z})\tensor H^*(\Z^2)$, although there is no homomorphism 
of groups inducing it. If we denote by $t_i\in H^i(G)$ the element corresponding to the generator of 
$H^i(\Z^i),\,\, i = 1,2$ (after fixing a preferred orientation of $BG$), then

\begin{proposition}\label{prop:ex1} 
The cohomology class $t_2\in H^2(G)\isom \C$ cannot 
lie in the image of the comparison map $\Psi_{\B}^2:{\B}H^2(G)\to H^2(G)$ whenever 
${\B}\prec {\mathcal{E}}$.
\end{proposition}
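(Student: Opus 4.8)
The plan is to exploit the Hochschild--Serre spectral sequence in $\B$-bounded cohomology for the extension $\Z^2\rightarrowtail G\twoheadrightarrow\Z$, exactly in parallel with the spectral sequence computation for ordinary cohomology carried out just above the statement. First I would note that the key input is that $\Z^2$, equipped with the length function induced from its embedding in $G$, is length-equivalent to $(\Z^2, LWL_2)$ --- this is the exponential-distortion observation already recorded --- and that $\Z$ with the length induced from the projection $G\twoheadrightarrow\Z$ is just the standard $(\Z,L_1)$. Since the extension splits, the technical hypothesis needed to build the $\B$-bounded Hochschild--Serre spectral sequence is satisfied (as in Theorem~\ref{SSS} and the proof of Proposition~\ref{prop:log}), so we obtain $E_2^{p,q} = {\B}H^p(\Z; {\B}H^q_{log}(\Z^2)) \Rightarrow {\B}H^{p+q}(G)$, where the subscript $log$ records that we use $LWL_2$ on the fiber.

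Next I would identify the relevant low-degree groups. By Proposition~\ref{prop:log} (applied with $n=2$, which forces $\mathcal{B}\prec\mathcal{E}$), the comparison map ${\B}H^q_{log}(\Z^2)\to H^q(\Z^2)$ is zero for all $q>0$. More useful here: the coefficient $\Z$-module ${\B}H^2_{log}(\Z^2)$ --- which is where $t_2$ could possibly originate via $E_2^{0,2}$ --- maps to zero in $H^2(\Z^2)\cong\C$ under comparison. Now $t_2\in H^2(G)$ sits in the lowest filtration piece of $H^2(G)$, corresponding to $E_\infty^{0,2} = E_2^{0,2} = H^0(\Z; H^2(\Z^2))\cong\C$ in the ordinary spectral sequence (the higher differentials vanish for dimension/duality reasons, as established above). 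By naturality of the comparison map of spectral sequences, $\Phi^*_{\B}$ is filtration-preserving, and on the associated graded in bidegree $(0,2)$ it is induced by the coefficient map ${\B}H^2_{log}(\Z^2)\to H^2(\Z^2)$ followed by the (trivial, since $\Z$ acts trivially on $H^2(\Z^2)$) action of $H^0(\Z;-)$. Hence the induced map $E_2^{0,2}(\B)\to E_2^{0,2}$ factors through ${\B}H^2_{log}(\Z^2)\to H^2(\Z^2)$, which is zero by Proposition~\ref{prop:log}. Therefore no class in ${\B}H^2(G)$ can have image with nonzero component in the $(0,2)$-filtration piece --- but $t_2$ generates precisely that piece and $H^2(G)\cong\C$ is concentrated there (since $E_2^{1,1} = E_2^{2,0} = 0$ as computed above), so $t_2\notin\operatorname{im}(\Phi^2_{\B})$.

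The main obstacle I anticipate is making the ``filtration-preserving on associated graded'' argument fully rigorous in the bornological setting: the spectral sequences here are only \emph{weak} (they arise from exact couples but one must be careful about convergence and the behavior of the comparison map on the $E_\infty$-pages), and Meyer's observation that the ambient category is non-abelian means the usual diagram-chases need care. Concretely, I would need to check that the edge homomorphism ${\B}H^2(G)\to E_\infty^{0,2}(\B) = E_2^{0,2}(\B)$ and its ordinary counterpart fit into a commuting square with the coefficient comparison map ${\B}H^2_{log}(\Z^2)\to H^2(\Z^2)$; this is where I would spend the effort. Once that square commutes, the vanishing from Proposition~\ref{prop:log} finishes the argument. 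An alternative, possibly cleaner route --- which I would keep in reserve --- is to argue directly that any $\B$-bounded cocycle on $G$ representing $t_2$ would restrict to a $\B$-bounded cocycle on the fiber $\Z^2$ representing a nonzero multiple of the generator of $H^2(\Z^2)$, contradicting Proposition~\ref{prop:log} for $\Z^2$ with $LWL_2$; this bypasses the spectral-sequence bookkeeping at the cost of a more hands-on restriction argument.
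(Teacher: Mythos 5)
Your reserve alternative \emph{is} the paper's proof, and it is simpler than you make it sound: no hands-on cocycle restriction is needed. The paper only uses that the comparison map is natural with respect to group homomorphisms, applied to the inclusion $\Z^2\hookrightarrow G$. This gives a commuting square with vertical arrows $\Psi^2_{\B}$ and horizontal arrows the restriction maps ${\B}H^2(G)\to{\B}H^2_{log}(\Z^2)$ and $H^2(G)\to H^2(\Z^2)$ (the subscript $log$ appearing because of the exponential distortion of $\Z^2$ in $G$, exactly as you note). The ordinary spectral sequence computation already done before the proposition shows the bottom arrow sends $t_2$ to the generator of $H^2(\Z^2)$, and Proposition~\ref{prop:log} makes the right vertical arrow zero; commutativity finishes it. You correctly identified all three of these inputs, so the mathematical content of your proposal is right.

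Your primary route, however, has a genuine problem you should not gloss over: Theorem~\ref{SSS} identifies the $E_2$-term of the bornological Hochschild--Serre spectral sequence only under the hypothesis that the fiber $(G_1,L_1)$ is $V$-$\B$IC, and here the fiber is $(\Z^2,LWL_2)$, which is emphatically \emph{not} $\B$-IC for $\B\prec\mathcal{E}$ --- that non-isocohomologicality is the whole point of Proposition~\ref{prop:log}. The appeal to ``the extension splits, so the technical hypothesis is satisfied'' is borrowed from the proof of Proposition~\ref{prop:log}, where it is invoked only to build the spectral sequences for products of copies of $\Z$ with $LWL$ lengths; it is not a license to assume the $E_2$-identification, convergence, and filtration-compatibility of the comparison map for the extension $\Z^2\rightarrowtail G\twoheadrightarrow\Z$ in the bounded setting. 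Moreover, once you unwind your edge-homomorphism argument at bidegree $(0,2)$, the composite ${\B}H^2(G)\to E^{0,2}_2(\B)\to E^{0,2}_2$ is nothing but the restriction to the fiber followed by comparison, so the spectral-sequence bookkeeping buys you nothing beyond the naturality square and costs you an unverified (and, as stated in the paper, unsatisfied) hypothesis. Promote your reserve argument to the main one and phrase it as naturality of $\Psi^*_{\B}$ rather than as an explicit cocycle restriction.
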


\begin{proof} 
The comparison map is natural with respect to those maps induced by group homomorphisms, implying the existence of a commuting diagram

\begin{center}
\centerline{
\xymatrix
{
{\B}H^2(G)\ar[r]\ar[d]^{\Psi_{\B}^2} & {\B}H^2_{log}(\Z^2)\ar[d]^{\Psi_{\B}^2}\\
H^2(G)\ar[r]\ar[r] & H^2(\Z^2)
}}
\end{center}

For ${\B}\prec {\mathcal{E}}$, the map on the right is trivial by Proposition \ref{prop:log}, 
while the  spectral sequence argument for ordinary cohomology just given shows the lower horizontal 
map sends $t_2$ non-trivially to the generator of $H^2(\Z^2)\isom \C$. 
Thus $t_2$ cannot be in the image of $\Psi_{\B}^2$ (this is in the spirit to Gromov's original argument referenced above).
\end{proof}

With some additional work, one can also show $t_1t_2\in H^3(G)$ is not in the image of $\Psi_{\B}^3$ 
whenever ${\B}\prec {\mathcal{E}}$. Of course, by Theorem \ref{thm:manifold}, The dual fundamental 
class $u'\in H^3(G\times G)$ cannot be in the image of $\Psi_{\B}^3$ for ${\B}\prec {\mathcal{E}}$.

There are some additional consequences of this first example worth noting (with ${{\mathcal{P}}}\preceq{\B}\prec {\mathcal{E}}$).
\begin{itemize}
\item All surface groups are non-positively curved - hence $\B$-IC - so $3$ is the 
	lowest dimension for which there can exist a closed oriented $K(\pi,1)$ manifold with non-$\B$-bounded cohomology.
\item Nilpotent groups are $\B$-IC when ${{\mathcal{P}}}\prec {\B}$ 
	\cite{O1}, \cite{JR1} , so solvable groups are the simplest types of groups which could have non-$\B$-bounded 
	cohomology for ${{\mathcal{P}}}\preceq {\B}$.
\item For finitely-generated groups, all $1$-dimensional cohomology classes exhibit linear growth with respect 
	to the word-length function, so cohomological dimension $2$ is the first dimension in which classes not 
	$\B$-bounded with respect to the word-length function could occur.
\item If the first Dehn function of $G$ were $\B$-bounded, $G$ would have to be strongly 
	$\B$-isocohomological in cohomology dimensions $1$ and $2$. By contradiction, we recover the 
	result of Gersten [Ge1] that the first Dehn function of $G$ must be (at least) exponential.
\end{itemize}

\underline{\bf Example 2} [Arzhantseva-Osin] 
Let $\phi : \Z^2 \to SL_3( \Z )$ be an injection sending the usual generators of $\Z^2$ to to semi-simple matrices
with real spectrum.  Denote by  $H$ be the semi-direct product $\Z^3\rtimes \Z^2$ where $\Z^2$ acts via the 
representation induced by $\phi$.

The classifying space $BH$ is homotopy-equivalent to a $5$-dimensional closed, compact, and oriented solvmanifold 
$M^5$. It is shown in \cite{AO} that $\Z^3$ is exponentially distorted in $H$ in a manner similar to the previous example.

\begin{theorem} 
There exists a cohomology class $t_3\in H^3(H)$ not in the image of $\Psi_{\B}^3$ for any ${\B}\prec {\mathcal{E}}$.
\end{theorem}

\begin{proof} 
On the level of classifying spaces, the short-exact sequence 
$\Z^3\overset{i}{\rightarrowtail} H\overset{p}{\twoheadrightarrow} \Z^2$ corresponds 
to a fibration  sequence of closed oriented compact manifolds, with the maps preserving orientation. Thus 
the top-dimensional cohomology class $\mu_5\in H^5(H)$ satisfies $\mu_5 = \mu_3\mu_2$ where $\mu_3$ maps 
under $i^*$ to $0\ne \mu'_3\in H^3(\Z^3)^{\Z^2}$ (the $\Z^2$-invariant fundamental 
cohomology class of $\Z^3$), and $\mu_2 = p^*(\mu_2')$ where $\mu_2'\in H^2(\Z^2)$ is the 
fundamental cohomology class for $\Z^2$.

As before, there is a commuting diagram

\begin{center}
\centerline{
\xymatrix
{
{\B}H^3(H)\ar[r]\ar[d]^{\Psi_{\B}^3} & {\B}H^3_{log}(\Z^3)\ar[d]^{\Psi_{\B}^3}\\
H^3(G)\ar[r]\ar[r] & H^3(\Z^3)
}}
\end{center}

where the map on the right is zero. The result follows.
\end{proof}

By a more detailed analysis, one can conclude that $\mu_5\in H^5(H)\isom \C$ is not $\B$-bounded 
for any ${\B}\prec {\mathcal{E}}$, and by Theorem \ref{thm:manifold}, we know the same for the dual 
fundamental class in $H^5(H\times H)$. However, this example is important for another reason.

\begin{corollary} The Dehn functions of $H$ are not $\B$-equivalent for any bounding class 
${\B}\prec {\mathcal{E}}$. Precisely, the first Dehn function is quadratic, while the second 
Dehn function is at least simple exponential.
\end{corollary}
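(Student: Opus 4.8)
The plan is to derive this by combining the preceding theorem with the degree-wise geometric criterion of Theorem~\ref{thm:geocritRest}; the corollary is essentially a repackaging of those two results. The quadratic first Dehn function needs no new argument --- $H=\Z^{3}\rtimes\Z^{2}$ is precisely the Arzhantseva--Osin group, and $d_{H}^{1}\sim x^{2}$ is the feature of the construction in \cite{AO} that coexists with the exponential distortion of $\Z^{3}$ in $H$.

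For the lower bound on $d_{H}^{2}$ I would argue by contradiction. Since $BH\homotopic M^{5}$ is a closed aspherical manifold, $H$ is finitely presented of type $HF^{\infty}$ and $X:=\widetilde{M^{5}}$ is a free, contractible, finite-type $H$-complex, which I would equip with the $1$-skeleton weighting so that its geometric Dehn functions are the $d_{H}^{n}$ and its weighted homological Dehn functions $d_{C_{*}(X)}^{w,n}$ are comparable to them through~(\ref{eqn:compareDehn}). Suppose $d_{H}^{2}$ were $\B$-bounded for some bounding class $\B\prec\mathcal{E}$. Replacing $\B$ by the multiplicative bounding class generated by $\B\cup\mathcal{P}$ --- still $\prec\mathcal{E}$, since every member of a bounding class below $\mathcal{E}$ is subexponential (else $\mathcal{L}$-composition would force $\mathcal{E}\preceq\B$) and products of subexponential functions stay subexponential --- I may assume $\mathcal{P}\preceq\B$ and $\B$ multiplicative. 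Then~(\ref{eqn:compareDehn}) gives $d_{C_{*}(X)}^{w,1}\preceq(d_{H}^{1})^{2}\sim x^{4}$ and $d_{C_{*}(X)}^{w,2}\preceq(d_{H}^{2})^{2}$, both $\B$-bounded, so condition $(\B'3)$ of Theorem~\ref{thm:geocritRest} holds with $r=\infty$ and $n=2$. Hence $(\B'1)$ holds, and in the case $V=\C$ the comparison map $\Psi_{\B}^{3}=\Phi_{\B}^{3}:{\B}H^{3}(H)\to H^{3}(H)$ is surjective. This contradicts the preceding theorem, which produces a class $t_{3}\in H^{3}(H)$ lying in the image of $\Psi_{\B}^{3}$ for no $\B\prec\mathcal{E}$. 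Therefore $d_{H}^{2}$ is $\B$-bounded for no $\B\prec\mathcal{E}$. Since the subexponential functions form a bounding class properly below $\mathcal{E}$ while the simple exponentials form a bounding class, this is precisely the statement that $d_{H}^{2}$ is at least simple exponential; and as $d_{H}^{1}\sim x^{2}$ is $\mathcal{P}$-bounded, no bounding class $\B\prec\mathcal{E}$ can contain both $d_{H}^{1}$ and $d_{H}^{2}$, which is the asserted failure of $\B$-equivalence.

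The step that requires care, and the one I expect a reader to scrutinize, is the bookkeeping linking the unweighted geometric $d_{H}^{2}$ of the statement with the weighted homological Dehn function actually consumed by Theorem~\ref{thm:geocritRest} (handled by~(\ref{eqn:compareDehn}), which is exactly why only closure of $\B$ under squaring is needed) and the dictionary between ``$\B$-bounded for no $\B\prec\mathcal{E}$'' and ``at least simple exponential''. Both are routine; the mathematical substance is imported wholesale from the preceding theorem and from Theorem~\ref{thm:geocritRest}.
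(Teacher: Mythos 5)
Your argument follows the paper's proof exactly: import $d_H^1\sim x^2$ from \cite{AO}, assume $d_H^2$ is $\B$-bounded for some $\B\prec\mathcal{E}$, invoke Theorem~\ref{thm:geocritRest} to get surjectivity of $\Phi_\B^3$, and contradict the preceding theorem on $t_3$. In fact you are more scrupulous than the paper, which only assumes $\mathcal{L}\preceq\B\prec\mathcal{E}$ and does not address the passage from unweighted to weighted Dehn functions; your observation that (\ref{eqn:compareDehn}) requires closure under squaring, handled by replacing $\B$ with a multiplicative class below $\mathcal{E}$ (e.g.\ the subexponential functions), closes a gap the paper leaves implicit.
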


\begin{proof} 
The first Dehn function of $H$ was computed in \cite{AO}, where it was shown to be quadratic. 
If the second Dehn function were $\B$-bounded for some 
${\mathcal{L}} \preceq {\B} \prec {\mathcal{E}}$, then by Theorem \ref{thm:geocritRest}, the group $H$ would have to 
be $\B$-isocohomological through dimension $3$ contradicting the previous result. So the second 
Dehn function must be at least simple exponential. 
\end{proof}


\subsection{More on the comparison map}

We have shown the comparison map fails to be surjective in general, at least for bounding classes 
${\B}\prec {\mathcal{E}}$. It is natural to ask whether this map also fails to be injective. 
The next theorem answers this question.

\begin{theorem}\label{thm:surj} 
Let $(G,L_{st})$ be a discrete group with standard word-length function, 
with $BG\homotopic Y$ a finite complex. If $\B$ is a bounding class for which the comparison 
map $\Phi_{\B}^*(G):{\B}H^*(G)\to H^*(G)$ fails to be surjective, then there is another 
group ${\mathfrak F}(G)$, depending functorially on $G$ up to homotopy, for which the comparison map 
$\Phi_{\B}^*$ fails to be injective.
\end{theorem}

\begin{proof} 
As $BG$ is homotopically finite, we may construct a finitely-generated hyperbolic group 
${\mathfrak H}(G)$ and a map $p_G:{\mathfrak H}(G)\to G$ which induces an injection in group cohomology 
with trivial coefficients \cite{Gromov, Charney-Davis, Davis-Janu.}. Also, for any discrete group $G'$, a  
classical construction allow us to embed $G'$ in an acyclic group $A(G')$, where the inclusion $i_{G'}:G'\hookrightarrow A(G')$ 
is a functorial construction in $G'$. If $G'$ is finitely-generated and equipped with the standard word-length 
function, we can arrange for the image of $G'$ in $A(G')$ to be non-distorted. Abbreviate ${\mathfrak H}(G)$ 
as $C$, and let $A_1 = G\times A(C)$, $A_2 = A(C)$. There are inclusions
\begin{gather}
C\hookrightarrow A_1,\,\, g\mapsto (p_G(g),i_C(g)),\\
C\hookrightarrow A_2,\,\, g\mapsto i_C(g)
\end{gather}
Let $A_3 = A_1\underset{C}{*}A_2$. By the spectral sequence of section \ref{sect:CompSS}, there is a commuting diagram of Mayer-Vietoris sequences
\begin{center}
\centerline{\small{
\xymatrix{
\dots \ar[r] & {\B}H^{j-1}(C)\ar[r]^{\delta}\ar[d]^{\isom}
& {\B}H^{j}(A_3)\ar[r]\ar[d] & {\B}H^j(A_1)\oplus {\B}H^j(A_2)\ar[r]\ar[d] & {\B}H^j(C)\ar[r]\ar[d]^{\isom} 
&\dots\\
\dots \ar[r] & H^{j-1}(C)\ar[r]^{\delta}
& H^{j}(A_3)\ar[r] & H^j(A_1)\oplus H^j(A_2)\ar[r] & H^j(C)\ar[r]^{\delta} & \dots
}}
}\end{center}

Because $C$ is finitely-generated hyperbolic, the comparison map for $C$ is an isomorphism for all 
${{\mathcal{L}}}\preceq {\B}$. Moreover, $H^*(A_2) = 0$ for $*>0$, and $F^*(A_1)\isom F^*(G)\tensor F^*(A(C))$ 
for $F^*(_-) = {\B}H^*(_-), H^*(_-)$. Hence the cokernel of the comparison map for $A_1$ is naturally isomorphic 
to the cokernel of the comparison map for $G$. The injectivity of $H^*(G)\to H^*(C)$ implies the map 
$H^j(A_3)\to H^j(A_1)\oplus H^j(A_2)$ is zero for $j > 0$. The result is an injection

\[
	\coker\left(\Phi_{\B}^*:{\B}H^*(A_1)\oplus {\B}H^*(A_2)\to H^*(A_1)\oplus H^*(A_2)\right)
		\hookrightarrow \ker\left(\Phi^{*+1}_{\B}:{\B}H^{*+1}(A_3)\to H^{*+1}(A_3)\right)
\]

Define ${\mathfrak F}(G) = A_3$. If $\coker(\Phi^m_{\B}(G)) \neq 0$, then 
$\ker(\Phi^{m+1}_{\B}({\mathfrak F}(G)))\neq 0$. The acyclic group construction $G\mapsto A(G)$ 
can be done functorially, as can the hyperbolization of the finite complex $Y$. However, this requires 
choosing a finite complex $Y\homotopic BG$, which, on the category of type finitely-presented $FL$ groups, 
is functorial only up to homotopy.
\end{proof}

\begin{corollary} There exist discrete groups equipped with standard word-length function for 
which the comparison map $\Phi_{\B}^3$ fails to be injective for all ${\B}\prec {\mathcal{E}}$.
\end{corollary}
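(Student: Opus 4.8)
The plan is to derive this directly from Example 1 (the Gromov solvmanifold group) together with the functorial ``$\coker\to\ker$'' construction of Theorem \ref{thm:surj}. Throughout, $\B$ is understood to satisfy ${\mathcal L}\preceq\B\prec{\mathcal E}$, which is the standing hypothesis of Example 1 and is also what makes Theorem \ref{thm:surj} applicable.

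First I would take $G$ to be the group $\Z^2\rtimes\Z$ of Example 1 — the group denoted $G_1$ in Theorem A — equipped with its standard word-length function. Since $BG$ is homotopy-equivalent to the closed, compact, oriented $3$-dimensional solvmanifold $M_1^3$, we have $BG\homotopic Y$ for a finite complex $Y$, so $G$ is finitely presented of type $FL$ and the hypotheses of Theorem \ref{thm:surj} are satisfied. By Proposition \ref{prop:ex1}, for every such $\B$ the generator $t_2$ of $H^2(G)\isom\C$ is not in the image of $\Phi_{\B}^2(G)$; as $H^2(G)$ is one-dimensional this means $\im(\Phi_{\B}^2(G)) = 0$, hence $\coker(\Phi_{\B}^2(G))\ne 0$, i.e.\ the comparison map for $G$ fails to be surjective in degree $2$.

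Next I would feed this $G$ into Theorem \ref{thm:surj} for each such $\B$. The theorem produces a discrete group ${\mathfrak F}(G)$ with standard word-length function and, from the injection constructed in its proof,
\[
\coker\bigl(\Phi_{\B}^2(A_1)\oplus\Phi_{\B}^2(A_2)\bigr)\hookrightarrow\ker\bigl(\Phi_{\B}^{3}({\mathfrak F}(G))\bigr),
\]
where (in the notation of that proof) $\coker(\Phi_{\B}^*(A_1))\isom\coker(\Phi_{\B}^*(G))$ naturally and $A_2 = A(C)$ is acyclic. Combining this with the previous paragraph gives $\ker(\Phi_{\B}^{3}({\mathfrak F}(G)))\ne 0$ for every $\B$ with ${\mathcal L}\preceq\B\prec{\mathcal E}$, so ${\mathfrak F}(G)$ is the required example. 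If one also wants the example to be of type $FL$, one restricts further to $\B\succeq{\mathcal P}$ and invokes the corresponding clause of Theorem \ref{thm:surj}.

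Since this is a straight assembly of results already proved, there is no serious obstacle; the only points to keep straight are the degree bookkeeping — the failure of surjectivity for $G_1$ occurs precisely in degree $2$, and Theorem \ref{thm:surj} shifts this up by one, which is exactly why the corollary is stated for $\Phi_{\B}^3$ — and the implicit hypothesis ${\mathcal L}\preceq\B$, which enters through the proof of Theorem \ref{thm:surj} via the $\B$-isocohomologicality of the auxiliary hyperbolic group $C = {\mathfrak H}(G)$.
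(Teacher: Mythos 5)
Your proposal is correct and is essentially the paper's own argument: the paper likewise takes $G$ to be the group of Proposition \ref{prop:ex1}, uses the non-surjectivity of $\Phi_{\B}^2(G)$ from that proposition, and feeds it into Theorem \ref{thm:surj}, with the degree shift from $2$ to $3$ coming from the $\coker(\Phi^m_{\B})\hookrightarrow\ker(\Phi^{m+1}_{\B})$ injection. Your explicit remarks on the degree bookkeeping and the standing hypothesis ${\mathcal L}\preceq\B$ are accurate and only make the argument more careful than the paper's two-line version.
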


\begin{proof} Let $G$ be the group in Proposition \ref{prop:ex1}. By the previous theorem, 
$\Phi_{\B}^3({\mathfrak F}(G))$ cannot be an injection for any ${\B}\prec {\mathcal{E}}$.
\end{proof}

It should be noted that the groups resulting from the above constructions will typically have large classifying 
spaces, even when $BG$ has the homotopy type of a relatively simple complex. The following alternative construction provides a more geometric model for the acyclic ``envelope'' used above. Again, assume $G$ is type $FL$, so that $BG\simeq Y$ a finite complex. According to recent work of Leary [L], we may construct a diagram
\begin{center}
\centerline{
\xymatrix{
T_Y\ar@{>->}[r]\ar[d] & T_{\widehat{Y}}\ar[d]\\
Y\ar@{>->}[r] & \widehat{Y}
}}
\end{center}
where $\widehat{Y}$ denotes the cone on $Y$ (which can be done so as to be functorial in $Y$ and preserve finiteness), and where $T_X$ denotes the ``metric'' Kan-Thurston space over $X$. By [L], this is a CAT(0)-space (hence aspherical) whose construction is functorial on the category of finite complexes, for which the map $T_X\to X$ is a homology isomorphism. Thus in the above setup, we can replace $C$ by $C_1 := \pi_1(T_Y)$ and $A(C)$ by $C_2 := \pi_1(\widehat{Y})$, and repeat the construction with $A_1 = G\times C_2$, $A_2 = C_2$, ${\mathfrak F}(G) = A_3 = A_1\underset{C_1}{*} A_2$, the difference now being that $A_1$, $A_2$ as well as the amalgamated product $A_3$ are all of type $FL$. Because CAT(0)-groups admit a synchronous linear combing, they are $\B$-SIC for all ${\B}\succeq \mathcal{P}$. Hence

\begin{theorem}\label{thm:surj2} Let $(G,L_{st})$ be a group of type $FL$ with standard word-length function, 
where $BG\homotopic Y$ a finite complex. If $\B\succeq {\mathcal P}$ is a bounding class for which the comparison map $\Psi_{\B}(G):{\B}H^*(G)\to H^*(G)$ fails to be surjective, then there is another
group ${\mathfrak F}(G)$ of type $FL$, depending functorially on $G$ up to homotopy, for which the comparison map $\Phi_{\B}^*$ fails to be injective.
\end{theorem}

\begin{corollary} There exist discrete groups equipped with standard word-length function of type $FL$ for 
which the comparison map $\Phi_{\B}^3$ fails to be injective for all ${\mathcal P}\preceq {\B}\prec {\mathcal{E}}$.
\end{corollary}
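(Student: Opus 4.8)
The plan is to apply Theorem~\ref{thm:surj2} to a specific group $G$ for which the comparison map $\Phi_{\B}^*$ is known to be non-surjective in degree $2$ while also being of type $FL$. The natural candidate is the group $G = \Z^2 \rtimes \Z$ of Example~1 (Proposition~\ref{prop:ex1}), with the standard word-length function. First I would check that this $G$ is of type $FL$: indeed $BG$ is homotopy-equivalent to the $3$-dimensional closed orientable solvmanifold $M_1^3$, which is a finite complex, so $G$ is of type $FL$. Next, by Proposition~\ref{prop:ex1}, for any bounding class $\B \prec \mathcal{E}$ the cohomology class $t_2 \in H^2(G) \isom \C$ does not lie in the image of $\Psi_{\B}^2 : \B H^2(G) \to H^2(G)$, so the comparison map $\Phi_{\B}^2(G)$ fails to be surjective.

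The second step is to verify the hypothesis $\B \succeq \mathcal{P}$ required by Theorem~\ref{thm:surj2}. Since we are restricting to $\mathcal{P} \preceq \B \prec \mathcal{E}$, this is immediate. Applying Theorem~\ref{thm:surj2} then produces a group $\mathfrak{F}(G)$ of type $FL$, depending functorially on $G$ up to homotopy, for which the comparison map $\Phi_{\B}^*$ fails to be injective. Tracking degrees through the proof of Theorem~\ref{thm:surj2}: the injection there takes $\coker(\Phi_{\B}^m(G))$ into $\ker(\Phi_{\B}^{m+1}(\mathfrak{F}(G)))$, and since the cokernel is nonzero in degree $m = 2$, the kernel of $\Phi_{\B}^3(\mathfrak{F}(G))$ is nonzero. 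Hence $\Phi_{\B}^3$ fails to be injective for $\mathfrak{F}(G)$, and this holds for all $\mathcal{P} \preceq \B \prec \mathcal{E}$, as claimed.

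The only point requiring a little care is the degree bookkeeping: one must confirm that the non-surjectivity of $\Phi_{\B}^2(G)$ (rather than some higher degree) is what feeds into the machine, and correspondingly that non-injectivity appears precisely in degree $3$ for $\mathfrak{F}(G)$. This follows directly from the construction of $A_3 = A_1 \underset{C}{*} A_2$ in the proof of Theorem~\ref{thm:surj2}, where the Mayer--Vietoris connecting map $\delta$ shifts degree by one and $\coker(\Phi_{\B}^*(A_1)) \isom \coker(\Phi_{\B}^*(G))$ because $A_2$ is acyclic and $C$ is hyperbolic (hence $\B$-IC for all $\B \succeq \mathcal{L}$). I expect no genuine obstacle here; the corollary is essentially a direct specialization of Theorem~\ref{thm:surj2} to the Gromov example, and the proof is short.

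\begin{proof}
Take $G = \Z^2\rtimes\Z$ as in Example~1 and Proposition~\ref{prop:ex1}, with the standard word-length function. Since $BG\homotopic M_1^3$ is a finite complex, $G$ is of type $FL$. By Proposition~\ref{prop:ex1}, for every bounding class $\B\prec\mathcal{E}$ the class $t_2\in H^2(G)$ is not in the image of $\Psi_{\B}^2$, so $\Phi_{\B}^2(G)$ is not surjective, i.e. $\coker(\Phi_{\B}^2(G))\ne 0$. If moreover $\mathcal{P}\preceq\B$, then Theorem~\ref{thm:surj2} applies and yields a group $\mathfrak{F}(G)$ of type $FL$ with an injection
\[
\coker\left(\Phi_{\B}^2(G)\right)\hookrightarrow \ker\left(\Phi_{\B}^{3}(\mathfrak{F}(G))\right).
\]
Since the left-hand side is nonzero, $\ker(\Phi_{\B}^3(\mathfrak{F}(G)))\ne 0$, so $\Phi_{\B}^3$ fails to be injective for $\mathfrak{F}(G)$. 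This holds for all $\B$ with $\mathcal{P}\preceq\B\prec\mathcal{E}$.
\end{proof}
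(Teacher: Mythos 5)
Your proof is correct and is essentially the paper's own argument: the paper likewise specializes Theorem~\ref{thm:surj2} to the group $G=\Z^2\rtimes\Z$ of Proposition~\ref{prop:ex1} (which is of type $FL$ since $BG$ is a closed aspherical $3$-manifold), feeding the non-surjectivity of $\Phi_{\B}^2(G)$ into the degree-shifting injection $\coker(\Phi_{\B}^2(G))\hookrightarrow\ker(\Phi_{\B}^3(\mathfrak{F}(G)))$. Your degree bookkeeping and the verification of the hypotheses $\mathcal{P}\preceq\B$ and $\B\prec\mathcal{E}$ are all as intended.
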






\end{document}